\documentclass[a4paper,12pt]{amsart}
\usepackage{graphicx}
\usepackage{amsmath}
\usepackage{amssymb}
\usepackage{amsthm}
\usepackage{esint}
\usepackage{mathrsfs}
\usepackage[utf8]{inputenc}
\usepackage{mathtools}
\usepackage[a4paper,margin=2.8cm]{geometry}

\usepackage{color}
\usepackage{accents}

\newcommand{\e}{\varepsilon}
\newcommand{\de}{\partial}
\renewcommand{\H}{\mathbb{H}}
\newcommand{\cerchio}[1]{\accentset{\smash{\raisebox{-0.12ex}{$\scriptstyle\circ$}}}{#1}\rule{0pt}{2.3ex}}
\renewcommand{\ni}{\nu}
\newcommand{\ci}[1]{\mathscr{#1}}
\newcommand{\alfa}{\alpha}
\newcommand{\con}[1]{\overline{#1}}
\newcommand{\N}[1]{\left\lVert#1\right\rVert}
\newcommand{\mi}{\mu}
\newcommand{\C}{\mathbb{C}}
\newcommand{\R}{\mathbb{R}}
\newcommand{\bra}{\left\langle}
\newcommand{\ket}{\right\rangle}

\newtheorem{theorem}{Theorem}[section]
\newtheorem{lem}[theorem]{Lemma}

\theoremstyle{Corollary}
\newtheorem{cor}[theorem]{Corollary}
\newtheorem{prop}[theorem]{Proposition}
\newtheorem{defn}[theorem]{Definition}
\newtheorem{conj}[theorem]{Conjecture}
\newtheorem{lemma}[theorem]{Lemma}

\numberwithin{equation}{section}

\begin{document}

\title[Compactness and Noncompactness for the CR Yamabe Problem]{Compactness in Dimension Five and Equivariant Noncompactness for the CR Yamabe Problem}

\author{Claudio Afeltra}
\address{Université de Montpellier,  
	Institut Montpelliérain  Alexander Grothendieck.
	Place Eug\`ene Bataillon, 
	34090 Montpellier,  France}

\email{claudio.afeltra@umontpellier.fr}

\author{Pak Tung Ho}
\address{Department of Mathematics, Tamkang University Tamsui, New Taipei City 251301, Taiwan}

\email{paktungho@yahoo.com.hk}

\author{Andrea Pinamonti}

\address{Dipartimento di Matematica, Università di Trento\\
Via Sommarive, 14, 38123 Povo TN, Italy}
\email{andrea.pinamonti@unitn.it}

\subjclass[2000]{Primary 32V05, 32V20; Secondary 35R01, 53D10 }


\begin{abstract}
We study compactness and noncompactness phenomena for the CR Yamabe equation on compact strictly pseudoconvex CR manifolds. 
First, in dimension five we establish uniform \emph{a priori} estimates for families of positive solutions of subcritical equations for the conformal CR sub-Laplacian
\[
L_{J}u = u^{p},
\]
with $p$ bounded away from the critical exponent, assuming positivity of the CR Yamabe constant and positivity of the $p$-mass at every point. 
As a consequence, the corresponding set of solutions is precompact in H\"older topologies.
Secondly, we consider the equivariant CR Yamabe problem for a compact subgroup $G$ of pseudo-Hermitian transformations. 
We construct a $G$-invariant CR structure on $S^{3}$, not equivalent to the standard one, for which the associated CR Yamabe equation admits a sequence of $G$-invariant solutions whose maxima diverge, thereby proving noncompactness in the equivariant setting. 
The arguments combine a Pohozaev-type identity in pseudohermitian normal coordinates with a blow-up analysis and Liouville-type classification results on the Heisenberg group.
\end{abstract}

\maketitle

\section{Introduction}
  
Suppose $(M,g_0)$ is a compact $n$-dimensional Riemannian manifold 
of dimension $n\geq 3$. 
As a generalization of the Uniformization Theorem, the \textit{Yamabe problem} 
is to find a metric $g$ conformal to $g_0$ such that 
the scalar curvature $R_g$ of $g$ is constant. 
If we write $g=u^{\frac{4}{n-2}}g_0$ for some $0<u\in C^\infty(M)$, 
then the scalar curvatures of $g_0$ and $g$ are related by 
\begin{equation}\label{4}
-\frac{4(n-1)}{n-2}\Delta_{g_0}u+R_{g_0}u=R_g u^{\frac{n+2}{n-2}},
\end{equation}
where $\Delta_{g_0}$ is the Laplacian with respect to the metric $g_0$. 
In view of (\ref{4}), the Yamabe problem is equivalent to finding 
$0<u\in C^\infty(M)$ such that 
\begin{equation}\label{5}
-\frac{4(n-1)}{n-2}\Delta_{g_0}u+R_{g_0}u=c u^{\frac{n+2}{n-2}}
\end{equation}
for some constant $c$. 
The Yamabe problem was solved in a series of work by Aubin \cite{Aubin}, Trudinger \cite{Trudinger},
and Schoen \cite{Schoen}. In other words, (\ref{5}) has at least one solution. 

When $(M,g_0)$ has negative or zero Yamabe constant
(which corresponds to the case when $c<0$ or $c=0$ in (\ref{5}) respectively), 
it is easy to see that the solution to (\ref{5}) is unique up to normalization. 
In the case when $(M,g_0)$ has positive Yamabe constant
(which corresponds to the case when $c>0$ in (\ref{5})), 
Schoen raised the compactness conjecture in a topics course at Stanford in 1988: 
the set of solutions of the Yamabe equation (\ref{5})
is compact except when the manifold is conformally equivalent to
the standard unit sphere $S^n$. 

The compactness conjecture was proved by Khuri, Marques and Schoen \cite{KMS} 
in dimension $n\leq 24$ (see also 
\cite{Druet,LZ,Li&Zhu,M}
for previous results). It turns out that the compactness conjecture is false when $n\geq 25$; 
the counterexample 
was constructed by Brendle for $n\geq 52$ in \cite{Brendle}
and by Brendle and Marques for $25\leq n\leq 51$
in \cite{Brendle&Marques}. 

Since then, various compactness and noncompactness results have been proved 
in different contexts. For example, 
compactness and noncompactness results were obtained for the Yamabe-type problem on 
manifolds with boundary \cite{Almaraz1,Almaraz2,Almaraz&Wang,Chen&Wu,dS,Disconzi&Khuri,Ghimenti&Micheletti,Ho&Shin1,KMW}.

The Yamabe problem can be also proposed in the context of CR manifolds. 
Given a compact strictly pseudoconvex
CR manifold $(M,J,\theta)$ of real dimension $2n +1$,
the \textit{CR Yamabe problem} is to find a 
contact form conformal to $\theta$ such that its Webster scalar curvature is
constant. This was first introduced by Jerison and Lee in 
\cite{Jerison&Lee3}. 
If we write $\tilde{\theta}=u^{\frac{2}{n}}\theta$ for some $0<u\in C^\infty(M)$, then 
the Webster scalar curvature of $\theta$ and $\tilde{\theta}$ are related by 
\begin{equation}\label{2}
L_Ju=\widetilde{R}u^{1+\frac{2}{n}}
\end{equation}
where $L_J$ is the conformal CR sub-Laplacian given by 
$$L_J=-b_n\Delta_b+R,~~b_n:=2+\frac{2}{n}.$$
Here, $R$ 
(and $\widetilde{R}$) is the Webster scalar curvature of $\theta$
(of $\tilde{\theta}$ respectively).
It follows from (\ref{2}) that 
the CR Yamabe problem is equivalent to finding $0<u\in C^\infty(M)$ such that 
\begin{equation}\label{3}
L_Ju=cu^{1+\frac{2}{n}}
\end{equation}
for some constant $c$. 
After the work of Jerison and Lee in 
\cite{Jerison&Lee1,Jerison&Lee2,Jerison&Lee3}, 
the CR Yamabe problem was studied in 
\cite{CMY,Gamara,Gamara&Yacoub}.

Inspired by the Schoen's compactness conjecture stated above, it is natural 
to consider the compactness and noncompactness of solutions 
to the CR Yamabe equation (\ref{3}). 
The following theorem was proved in \cite{Afeltra} by the first author. 

\begin{theorem}[Theorem 1.1 in \cite{Afeltra}]\label{Afeltra_thm1}
Let $(M,J,\theta)$ be a compact $3$-dimensional strictly pseudoconvex
CR manifold of positive CR Yamabe constant such that, for every $x\in M$, 
its $p$-mass at $x$ is positive, i.e. $m_x>0$. 
Then, for every $\epsilon>0$ and $k\in\mathbb{N}$, there exists a constant $C$ such that 
$$\frac{1}{C}\leq u\leq C,~~\|u\|_{\Gamma^{k,\alpha}}\leq C$$
for every $u\in \cup_{1+\epsilon\leq p\leq 3}\mathcal{M}_p$ and $0<\alpha<1$. Here
$$\mathcal{M}_p=\{u>0: L_Ju=u^p\},$$
where $\Gamma_{k,\alpha}$ is the H\"{o}lder space. 
In particular, $\cup_{1+\epsilon\leq p\leq 3}\mathcal{M}_p$
is compact in the $\Gamma^{k,\alpha}$ topology. 
\end{theorem}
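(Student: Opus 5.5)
The plan follows the Schoen--Khuri--Marques--Schoen blow-up scheme, adapted to the Heisenberg group. It is enough to prove the uniform upper bound $u\le C$ on $\cup_{1+\epsilon\le p\le 3}\mathcal{M}_p$. Once that bound is available:
\begin{itemize}
\item subelliptic Harnack inequalities for $L_J$, together with positivity of the CR Yamabe constant, give the lower bound $u\ge 1/C$;
\item Folland--Stein estimates, bootstrapped, give the $\Gamma^{k,\alpha}$ bounds;
\item compactness then follows from Arzelà--Ascoli.
\end{itemize}
Away from the critical exponent ($p\le 3-\delta$), the upper bound follows from a rescaling argument and the Liouville theorem for subcritical equations on the Heisenberg group $\H^1$, namely the Gidas--Spruck type result for $-b_1\Delta_{\H}u=u^p$ with $p<3$. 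So we argue by contradiction, assuming $u_i\in\mathcal{M}_{p_i}$ with $p_i\to 3$ and $\max u_i\to\infty$.

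The first step is to pass to CR normal coordinates (Jerison--Lee pseudohermitian normal coordinates) at the blow-up points. We define isolated and isolated simple blow-up points as in Schoen's scheme. Rescaling $u_i$ around a local maximum $x_i$ by $M_i=u_i(x_i)$ with the Heisenberg dilations, and using the Jerison--Lee classification of positive solutions of $-b_1\Delta_{\H}u=u^3$, the rescaled functions converge to a standard bubble. A selection argument then shows that blow-up points are isolated. Next, using a Harnack inequality on dyadic annuli together with the spherical-average monotonicity of $r^{2/(p-1)}\bar u(r)$ (with respect to the homogeneous dimension $Q=4$), we show that isolated blow-up points are isolated simple. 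From this we obtain the sharp pointwise estimate $u_i\le C M_i^{-1}|x|^{-2}$ near $x_i$. Consequently $M_iu_i\to h$ away from the blow-up set, where $h=aG_{x_0}+b$, with $a>0$, $b\ge0$ and $G$ the Green function of $L_J$.

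The decisive step, and the main obstacle, is the local Pohozaev-type identity in pseudohermitian normal coordinates. For the radial (Heisenberg) dilation vector field $X=z\partial_z+\bar z\partial_{\bar z}+2t\partial_t$ on a Heisenberg ball $B_\sigma$, we write
\[
P(\sigma,u)=\int_{\partial B_\sigma}\Big(\text{boundary terms in } Xu,\ \nabla_b u\Big)
=\int_{B_\sigma}(Xu+u)(L_J-(-b_1\Delta_{\H}))u+\text{(subcritical terms)}.
\]
The subcritical terms involve $(3-p_i)$ and have a favourable sign. The interior error, coming from the difference between $L_J$ and the flat sub-Laplacian, is controlled by the expansion of $(J,\theta)$ in normal coordinates. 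In dimension three, unlike the Riemannian low-dimensional case, there is no vanishing of Weyl-type terms to exploit. We would therefore use the CR analogue of conformal normal coordinates, chosen so that the torsion and curvature terms vanish to high enough order, and aim to show that after multiplying by $M_i^2$ this error tends to $0$ as $i\to\infty$ and then $\sigma\to0$. Passing to the limit, the left side tends to $P(\sigma,h)$. The expansion $G=|x|_{\H}^{-2}+A+O(|x|)$, with $A$ proportional to the $p$-mass $m_{x_0}$, gives $\lim_{\sigma\to0}P(\sigma,h)=-c\,a(aA+b)<0$ if $m_{x_0}>0$. The right side, however, is asymptotically nonnegative. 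This is the contradiction. The hard part is the error bound: the $O(|x|)$ remainder in $G$ and the coordinate errors must be controlled by the decay estimate $u_i\le CM_i^{-1}|x|^{-2}$, so that their contribution is $o(1)$. I expect the torsion terms in the expansion of $\Delta_b$ to require the most care.

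Finally, positivity of the Yamabe constant guarantees that $G$ exists and is positive. The identity then excludes every blow-up point, which yields the uniform upper bound.
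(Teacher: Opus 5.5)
Your overall route is the one the paper relies on through \cite{Afeltra}, and adapts to dimension five in Section~\ref{comp}: reduce to a $C^0$ bound, run the Schoen--Li--Zhu blow-up analysis, use a Pohozaev identity in normal coordinates, and contradict $m_x>0$ through $G=c\rho^{-2}+A+O(\rho)$. Using CR normal coordinates for the error is also the right call. In plain pseudohermitian normal coordinates the coefficients of $\Delta_b-\cerchio{\Delta}_b$ deviate at relative weighted order two, so with $M_iu_i\lesssim\rho^{-2}$ the decay estimate only bounds the integrand by $\rho^{-4}$, which diverges logarithmically in homogeneous dimension $4$. Once $R$ and $A_{11}$ vanish at the point the deviation has order at least three, and the error is $O(\sigma)$. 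Two steps, however, fail as you state them.

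\emph{Classification of blow-up limits.} The Jerison--Lee theorem only classifies finite-energy solutions of $-b_1\Delta_{\H}u=u^3$, namely those with $u\in L^4(\H^1)$ and $\nabla_bu\in L^2$, i.e.\ those coming from $S^3$. Your rescaled limits are not known to be of this kind. The selection argument only gives a bounded positive entire solution. Even at an isolated blow-up point you only get $v\le C|x|^{-1}$, which is exactly borderline for $L^4(\H^1)$ since $Q=4$. What you need is a Liouville theorem for all bounded positive solutions on $\H^1$, with no assumption at infinity. This is the three-dimensional counterpart of Theorem~\ref{thm3.2}, proved recently by Catino, Li, Monticelli and Roncoroni, and it is the input that makes the three-dimensional statement accessible. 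Citing Jerison--Lee leaves the convergence of rescalings to a bubble unproved, and everything downstream depends on it.

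\emph{Isolated versus isolated simple.} Harnack on annuli and the monotonicity of $r^{2/(p-1)}\bar u$ give the sharp decay $u_i\le CM_i^{-1}\rho^{-2}$ once a point is known to be isolated simple. They do not show that isolated points are isolated simple. Nor do they show that the points produced by the selection argument stay uniformly separated. Both facts require the local Pohozaev sign condition $\liminf_{\sigma\to0}\int_{\partial B_\sigma}\ci{B}(x,h,\cerchio{\nabla}_bh)\ge 0$, which is Lemma~\ref{LemmaSegnoB} here and Lemma~4.14 and Proposition~4.15 in \cite{Afeltra}. The argument rescales at the second critical point of $\bar w_i$ and obtains a limit $a\rho^{-2}+b$ with $b(0)>0$; its Pohozaev boundary term tends to a negative multiple of $a\,b(0)$, contradicting the sign condition. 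So your Pohozaev estimate must be established first, in this local form. Only then can it be fed into these two steps, before the final mass contradiction.
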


As pointed out in \cite{Afeltra}, 
the assumption that $p$-mass is positive at every point $x\in M$
is difficult to check. 
Fortunately, we have the  
results by Takeuchi in \cite{Takeuchi} and 
by Cheng, Malchiodi, and Yang in \cite{CMY}, which say that 
any embeddable $3$-dimensional CR manifold $M$
which is not CR-equivalent to $S^3$ with the standard CR structure
must have positive $p$-mass at every point $x\in M$. 
Combining these with Theorem \ref{Afeltra_thm1}, we have the following: 

\begin{cor}[Corollary 1.2 in \cite{Afeltra}]\label{Afeltra_cor}
Suppose that $(M,J,\theta)$ is an embeddable $3$-dimensional CR manifold  
which has positive CR Yamabe constant and 
is not CR-equivalent to $S^3$ with the standard CR structure.
Then the statement of Theorem \ref{Afeltra_thm1} holds.  
\end{cor}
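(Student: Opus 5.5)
The corollary follows by checking the hypotheses of Theorem \ref{Afeltra_thm1} one at a time; the only one that needs work is positivity of the $p$-mass. Take $(M,J,\theta)$ embeddable, three-dimensional, with positive CR Yamabe constant, and not CR-equivalent to the standard $S^3$. Compactness, strict pseudoconvexity and positivity of the Yamabe constant are assumed outright. So the task is to prove $m_x>0$ for every $x\in M$.

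\textbf{Step 1: setting up the mass.} Fix $x\in M$. Since the Yamabe constant is positive, $L_J$ is invertible and has a positive Green's function $G_x$ with pole at $x$. Pull back by the pseudohermitian structure $\hat\theta_x=G_x^{2}\theta$ (here $n=1$, so the conformal exponent is $2/n=2$). The pole is sent to infinity, and $(M\setminus\{x\},J,\hat\theta_x)$ becomes an asymptotically flat pseudohermitian manifold, asymptotic to the Heisenberg group. Its $p$-mass is $m_x$ (up to a positive normalising constant, depending on the convention used in \cite{Afeltra}). Because the CR Yamabe constant of $M$ is positive, $\hat\theta_x$ has nonnegative Webster curvature, in fact zero Webster curvature away from infinity.

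\textbf{Step 2: positive mass theorem.} This is exactly the setting of the CR positive mass theorem of Cheng, Malchiodi and Yang \cite{CMY}. For embeddable $M$, Takeuchi's result \cite{Takeuchi} gives nonnegativity of the CR Paneitz operator, which is the hypothesis \cite{CMY} need. Their theorem then says $m_x\ge 0$, with equality only if $(M,J)$ is CR-equivalent to the standard $S^3$. Our hypothesis excludes the equality case, so $m_x>0$. Since $x$ was arbitrary, this holds at every point.

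\textbf{Step 3: conclusion.} All hypotheses of Theorem \ref{Afeltra_thm1} now hold. For every $\epsilon>0$ and $k$ we get the uniform bounds $C^{-1}\le u\le C$ and $\|u\|_{\Gamma^{k,\alpha}}\le C$ on $\cup_{1+\epsilon\le p\le 3}\mathcal M_p$. Compactness in the $\Gamma^{k,\alpha}$ topology follows by Arzelà--Ascoli: bounds in $\Gamma^{k+1,\alpha}$ give precompactness in $\Gamma^{k,\alpha}$. The set is also closed, because limits of solutions are solutions for a limiting $p$ (choose a convergent subsequence of the exponents), and they stay positive by the lower bound.

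\textbf{Main obstacle.} The delicate point is matching conventions. The $p$-mass in \cite{Afeltra} (its normalisation, and the choice of $\theta$ inside the conformal class) must be the quantity whose sign is controlled in \cite{CMY}. I would handle this by recalling that the mass of the blown-up structure is determined by the constant term of $G_x$ in CR normal coordinates. It therefore changes only by a positive factor under a conformal change of $\theta$, since $G_x$ transforms by positive factors. So its sign depends only on the CR structure, and the two notions of positivity agree.
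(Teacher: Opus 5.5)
Your proposal is correct and follows the paper's own argument. The paper likewise gets positivity of the $p$-mass at every point by combining Takeuchi's nonnegativity of the CR Paneitz operator for embeddable three-manifolds with the Cheng--Malchiodi--Yang positive mass theorem, whose equality case is excluded by the non-sphericity hypothesis, and then applies Theorem \ref{Afeltra_thm1} directly.

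Two small remarks. First, your Arzel\`a--Ascoli step is unnecessary, since compactness is already part of the conclusion of Theorem \ref{Afeltra_thm1}. Second, the vanishing of the Webster curvature of $G_x^2\theta$ away from the pole comes from $L_JG_x=0$ there; positivity of the Yamabe constant is what guarantees that $G_x$ exists and is positive.
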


In this paper, we prove the following theorem, 
which is the corresponding case of Theorem \ref{Afeltra_thm1} for dimension $5$. 

\begin{theorem}\label{CompactnessTheorem}
Let $(M,J,\theta)$ be a compact $5$-dimensional strictly pseudoconvex
CR manifold of positive CR Yamabe constant such that, for every $x\in M$, 
its $p$-mass at $x$ is positive, i.e. $m_x>0$. 
Then, for every $\epsilon>0$ and $k\in\mathbb{N}$, there exists a constant $C$ such that 
$$\frac{1}{C}\leq u\leq C,~~\|u\|_{\Gamma^{k,\alpha}}\leq C$$
for every $u\in \cup_{1+\epsilon\leq p\leq 2}\mathcal{M}_p$ and $0<\alpha<1$. Here
$$\mathcal{M}_p=\{u>0: L_Ju=u^p\},$$
where $\Gamma_{k,\alpha}$ is the H\"{o}lder space. 
In particular, $\cup_{1+\epsilon\leq p\leq 2}\mathcal{M}_p$
is compact in the $\Gamma^{k,\alpha}$ topology. 
\end{theorem}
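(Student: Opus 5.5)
We argue by contradiction, following the scheme of Schoen and Khuri--Marques--Schoen as adapted to the CR setting in \cite{Afeltra}. Suppose there are sequences $p_i\in[1+\epsilon,2]$ and $u_i\in\mathcal{M}_{p_i}$ with $\max_M u_i\to\infty$. The lower bound $u\geq 1/C$ and the higher $\Gamma^{k,\alpha}$ bounds follow from an upper bound on $u$. Indeed, positivity of the CR Yamabe constant lets the Harnack inequality for $L_J$ bound $\min u$ from below, and the Folland--Stein subelliptic estimates then bootstrap. So the whole task is the uniform upper bound. If $p_i$ stays below $2-\delta$, the problem is strictly subcritical, and a blow-up rescaling would produce a positive entire solution of $-b_2\Delta_{\H^2}u=u^{p}$ with $p<Q/(Q-2)+\ldots$ subcritical on the Heisenberg group $\H^2$ (homogeneous dimension $Q=6$). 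That is excluded by the Liouville theorem of Gidas--Spruck type on $\H^n$. Hence we may assume $p_i\to 2=1+\frac{2}{n}$.

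\textbf{Step 1: isolated and isolated simple blow-up points.} We work in pseudohermitian normal coordinates (Jerison--Lee) around a blow-up point $x_i\to \bar x$ and use the CR Yamabe conformal invariance to normalize $\theta$ so that the Webster curvature and torsion vanish to high order at $x_i$. The Jerison--Lee classification of positive solutions on $\H^2$ gives the following. After rescaling by $u_i(x_i)^{(p_i-1)/2}$ in Heisenberg dilations, $u_i$ converges to a standard bubble. Standard selection arguments then show the blow-up set consists of finitely many isolated blow-up points. We then show each is isolated simple, namely that the spherical average $\bar w_i(r)=r^{2/(p_i-1)}\bar u_i(r)$ has a single critical point in the relevant range. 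This uses a Harnack inequality on Heisenberg annuli together with a Pohozaev-type identity on Heisenberg balls $B_r$ of the form
\[
P(r,u_i)=\int_{B_r}\Big(\tfrac{Q}{p_i+1}-\tfrac{Q-2}{2}\Big)u_i^{p_i+1}+\int_{B_r}(\text{error from }L_J-L_{\H})\,(Xu_i),
\]
with $X$ the Heisenberg dilation generator.

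\textbf{Step 2: sharp pointwise estimates.} At an isolated simple blow-up point we prove $u_i(x_i)\,u_i(x)\le C\,|x|^{-(Q-2)}$, with $|x|$ the Heisenberg norm, via the usual comparison/maximum-principle argument. We then refine this to $|u_i-U_i|\le C\,u_i(x_i)^{-1}|x|^{?}$, where $U_i$ is the rescaled bubble. In dimension five ($n=2$, $Q=6$) the bubble decays like $|x|^{-4}$, and the normal-coordinate expansion of $L_J-L_{\H}$ has errors of order $|x|^2\cdot\partial^2$. This is low enough that, as in Schoen's argument for $n\le5$ in the Riemannian case, we do not need the delicate linearized correction terms of KMS. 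The error terms in the Pohozaev identity are controlled by $C\,r\,u_i(x_i)^{-2}$, and the subcritical term has a favorable sign.

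\textbf{Step 3: contradiction with positive mass.} We have $u_i(x_i)u_i\to G$ away from the blow-up set, where $G=a\,G_{\bar x}+h$ with $a>0$ and $h\ge0$ an $L_J$-harmonic function. In CR normal coordinates $G_{\bar x}=c|x|^{-4}+A+O(|x|)$, and $A$ is a positive multiple of the $p$-mass $m_{\bar x}$. Multiplying the Pohozaev identity by $u_i(x_i)^2$ and letting $i\to\infty$, the left side tends to $\le0$ by Step 2. The boundary term $\lim_{r\to0}P(r,G)$ equals $-c\,(m_{\bar x}+h(\bar x))<0$, which contradicts the hypothesis $m_x>0$. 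This shows blow-up cannot occur.

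I expect the main obstacle to be Step 2 together with the error control in the Pohozaev identity. The Heisenberg structure is anisotropic: the $T$-direction has weight $2$, and the torsion and curvature terms in the expansion of $L_J$ in normal coordinates must be checked to be of sufficiently high weighted order. One must also check that the vertical derivatives $Tu$ appearing in $Xu$ are controlled by the sharp decay estimate. I would first try to verify that all error terms are $O(|x|^{1})$ relative to $\Delta_{\H}$ in weighted order, which in $Q=6$ yields integrals $\int_{B_r}|x|^{1}|x|^{-2(Q-2)}\cdot|x|^{Q-1}\,d|x|$ that converge after the $u_i(x_i)^2$ normalization.
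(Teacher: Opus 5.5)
There is a genuine gap, and it comes from the dimensional analogy in your Step~2. The blow-up analysis is governed by the homogeneous dimension $Q=6$. So CR dimension five is the analogue of Riemannian dimension \emph{six}, the case Marques \cite{M} treats via Weyl vanishing, and not of Schoen's $n\le 5$. It is the borderline case in which curvature enters logarithmically, as reflected in the logarithmic $|S|^2$ term of Jerison--Lee's test-function estimate for $n=2$.

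Concretely, you cannot normalize $\theta$ so that curvature vanishes to high order. CR normal coordinates only give $R=O(|x|^2)$, and the weighted-quadratic part $R_2$ of $R$ at $\bar x$ cannot be removed: its contribution is governed by the Chern tensor $S(\bar x)$. After rescaling, the curvature term of the Pohozaev identity satisfies
\[
u_i(x_i)^2\int_{B_r}\Big(R+\tfrac12\Xi(R)\Big)u_i^2\,d\cerchio{V}\approx 2\int_{|y|\le r\,u_i(x_i)^{1/2}}R_2(y)\,U(y)^2\,dy .
\]
Since $R_2U^2$ is homogeneous of degree $-Q$ at infinity, this is of size $\log u_i(x_i)$. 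It has a definite sign, and its absolute value is $\gtrsim|S(\bar x)|^2\log u_i(x_i)$; this is how the paper uses Prop.~4.2 of \cite{Jerison&Lee2}. Hence your claim that the Pohozaev errors are $O(r\,u_i(x_i)^{-2})$ is false unless $S(\bar x)=0$. The culprit is the zero-order term $Ru$ of $L_J$, which your weighted-order count relative to $\Delta_{\mathbb{H}}$ does not see.

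Step~3 breaks for the same reason. When $S(\bar x)\neq 0$, $R\,|x|^{-4}$ is a resonant source in $Q=6$, and $G_{\bar x}$ in general picks up a logarithmic term. So the expansion $c|x|^{-4}+A+O(|x|)$, with $A$ a positive multiple of $m_{\bar x}$, is only available where $S$ vanishes. That is exactly the hypothesis of Proposition~\ref{GreenFunctionExpansion}.

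The missing step is to prove $S(\bar x)=0$ at every isolated simple blow-up point before invoking positive mass. This is where the linearized analysis you propose to skip is unavoidable, although in Marques' form rather than the full KMS correction. The paper proceeds as follows.
\begin{enumerate}
\item It shows $|v_i-U|\lesssim u_i(x_i)^{-2}\log u_i(x_i)$ for $|y|\le\gamma\,u_i(x_i)^{(p_i-1)/2}$. Normalizing $v_i-U$ by its maximum and passing to the limit gives a decaying solution of the linearized equation. By Lemma~\ref{GeneralizedMUlemma} this limit lies in the Malchiodi--Uguzzoni kernel of Theorem~\ref{TheoremMalchiodiUguzzoni}, and $v_i(0)=1$, $\nabla_bv_i(0)=0$ force it to vanish.
\item The bound $\tau_i\lesssim u_i(x_i)^{-2}\log u_i(x_i)$ comes from testing against $\frac{Q-2}{2}U+\Xi U$.
\item With these estimates, and their derivative versions following \cite{M}, every term of the Pohozaev identity multiplied by $u_i(x_i)^2$ is $O(1)$ except the curvature term. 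This forces $S(\bar x)=0$.
\end{enumerate}
The same sign information gives $\liminf_{r\to0}\mathscr{B}(x,h,\cerchio{\nabla}_bh)\ge 0$ (Lemma~\ref{LemmaSegnoB}). That is what proves isolated blow-up points are isolated simple in your Step~1, and what the final contradiction with $A'>0$ uses.

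A secondary point concerns the classification step. Your blow-up limit is only known to be a bounded positive solution on $\mathbb{H}^2$. Even at an isolated point, $v\le C|x|^{-2}$ does not give finite energy when $Q=6$. So the Jerison--Lee classification, which presupposes finite energy, does not apply directly. The paper instead uses the Flynn--V\'etois theorem (Theorem~\ref{thm3.2}) for bounded solutions on $\mathbb{H}^2$.
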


Like in the three-dimensional case, the verification of the hypothesis of positivity of the mass can be checked by the Positive Mass Theorems existing in the literature: the one in \cite{Cheng&Chiu&Yang} is valid for spherical CR manifolds verifying an analytical condition in dimension five, the one in \cite{Cheng&Chiu} is valid for five-dimensional spherical spin manifolds. Many conjecture that in dimension higher than three, a CR Positive Mass Theorem without additional hypothesis similar to the Riemannian one should hold, but the problem is currently completely open.

Our proof of Theorem \ref{CompactnessTheorem} is inspired by the work of Marques \cite{M}, in particular we adapt to the CR case the technique of \textit{symmetry estimates}.

The \textit{equivariant CR Yamabe problem} was first introduced and studied by the second author in \cite{Ho1}. 
To state it, we recall that a \textit{CR automorphism} of $(M,J,\theta)$ is a diffeomorphism $f: M \to M$ such that
its differential at any point maps horizontal vector to horizontal vector, i.e. 
$f^*(\ker \theta)\subseteq \ker \theta$.
Note that $f$ is a CR automorphism if and only if
$$f^*\theta=u\theta~~\mbox{ for some }u\in C^\infty(M).$$
Let $Aut_{CR}(M,J, \theta)$ be the group of all CR automorphisms of $(M,J,\theta)$. On the other hand, let
$I(M,J,\theta)$ be the group of all pseudo-Hermitian transformations $f$ of $(M, \theta)$, which preserves
the associated contact Riemannian metric
$$g=\theta\cdot\theta+d\theta\circ J.$$
Note that $I (M, J,\theta)$ is a subgroup of $Aut_{CR}(M,J, \theta)$ (cf. \cite{BK}).

\begin{conj}[Equivariant CR Yamabe problem]\label{conj1} Given a compact strictly pseudoconvex
CR manifold $(M,J,\theta)$ of real dimension $2n +1$, and a compact subgroup $G$ of $I (M, \theta)$, there
exists a $G$-invariant contact form conformal to $\theta$ such that its Webster scalar curvature is
constant.
\end{conj}

Here, a contact form $\tilde{\theta}$
is said to be $G$-invariant if  $f^*\tilde{\theta}=\tilde{\theta}$
for all $f\in G$. We remark that
the classical CR Yamabe problem is the special case of Conjecture \ref{conj1} when 
$G =\{id_M\}$.

In \cite{Afeltra&Pinamonti}, the first and third author proved the following: 

\begin{theorem}[Theorem 1.1 in \cite{Afeltra&Pinamonti}]\label{thm1}
There exists a CR structure on $S^3$, not equivalent to the standard one, 
such that the associated CR Yamabe equation 
$$L_Ju=2u^3$$  
has a set of solutions $\{u_k\}_{k\in\mathbb{N}}$ with $\max u_k\to \infty$. 
\end{theorem}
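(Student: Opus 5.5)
The statement is Theorem 1.1 of \cite{Afeltra&Pinamonti}. To prove it I would run a Lyapunov--Schmidt reduction around Jerison--Lee bubbles on a suitably deformed CR sphere. The guiding principle is Theorem \ref{Afeltra_thm1} read backwards. Compactness in dimension three follows from positivity of the $p$-mass at every point, so noncompactness should come from a CR structure whose mass is \emph{negative} at some point. By Takeuchi \cite{Takeuchi} and Cheng--Malchiodi--Yang \cite{CMY}, such a structure cannot be embeddable. So the first step is to choose a non-embeddable deformation $J$ of the standard structure $J_0$ on $S^3$, for instance of Rossi type, localized near a point $x_0$. The construction in \cite{CMY} provides such deformations with $m_{x_0}<0$, and I would take one of these.

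Next I would pass to pseudohermitian normal coordinates, or equivalently to the Cayley transform, at $x_0$. There the problem becomes $L_J u = 2u^3$ on $\H^1$, with $J$ a perturbation of the standard structure that is small, or decaying at infinity. For a scale $\e>0$ and a center $\xi$ near $0$, let $W_{\e,\xi}$ denote the Jerison--Lee bubble. I would solve the projected equation
\begin{equation*}
L_J(W_{\e,\xi}+\phi)-2(W_{\e,\xi}+\phi)^3\in \mathrm{span}\{Z_i\},\qquad \phi\perp Z_i ,
\end{equation*}
where the $Z_i$ span the kernel of the linearized operator at the bubble. This kernel is classified by the Liouville-type nondegeneracy results on the Heisenberg group. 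Invertibility of the linearized operator on the orthogonal complement, uniformly in the parameters, together with a contraction argument in weighted Folland--Stein spaces, yields $\phi=\phi(\e,\xi)$ with norm bounded by the size of the error $L_JW-2W^3$.

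The heart of the argument is the expansion of the reduced energy $F(\e,\xi)=E_J(W_{\e,\xi}+\phi)$. I expect it to take the form
\begin{equation*}
F(\e,\xi)=Y(S^3)+\e^{2}\bigl(c\,m(\xi)+o(1)\bigr), \qquad c>0 ,
\end{equation*}
with the mass appearing at leading order, as in Schoen's Riemannian expansion. I would derive this by a Pohozaev-type identity in normal coordinates that isolates the $\e^2$ boundary term at infinity. Negativity of $m$ near $x_0$ then lowers the energy below $Y(S^3)$. Choosing the deformation to have a strict interior minimum structure in $(\e,\xi)$ on a sequence of disjoint parameter regions $\e\in(\e_{k+1},\e_k)$, with $\e_k\to0$, produces critical points $(\e_k,\xi_k)$ of $F$. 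The corresponding solutions $u_k$ then have $\max u_k\sim\e_k^{-1}\to\infty$. Such a choice can be made either by a single deformation whose local geometry is self-similar across scales, or, in Brendle's style, by summing perturbations localized at geometrically decreasing scales. The structure $J$ is not equivalent to the standard one because its mass is nonzero.

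The main obstacle is the precise energy expansion. In dimension three the error terms of the reduction sit at exactly the same order as the mass term, so one must show that the $\phi$-contribution is genuinely $o(\e^2)$. One must also show that the critical points of the leading term are stable under the remainder. I would first try to get this via the Pohozaev identity on expanding Heisenberg balls, using the refined decay of $\phi$ established in the reduction.
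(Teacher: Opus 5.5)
\paragraph{There is a genuine gap, and it sits exactly where you place the heart of the argument.}

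Take a fixed smooth structure whose mass is nonzero near $x_0$. The expansion $F(\varepsilon,\xi)=Y(S^3)+\varepsilon^2(c\,m(\xi)+o(1))$ has a leading term that is strictly monotone in $\varepsilon$. Incidentally, it has the wrong sign: positive mass lowers the energy of concentrating bubbles below $Y(S^3)$, which is why it produces minimizers, so negative mass raises it. Since such expansions can be differentiated in $\varepsilon$, you get $\partial_\varepsilon F\neq 0$ for small $\varepsilon$. Hence the reduced energy has \emph{no} critical points with $\varepsilon\to 0$. Negative mass at $x_0$ therefore does not by itself produce blowing-up solutions.

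As a result, the whole proof rests on your one-sentence remark about choosing the deformation across scales, and for that you give no construction and no estimate.
\begin{itemize}
\item The self-similar option is not available. A deformation invariant under a discrete dilation, with nonzero amplitude, is not smooth at its center, so it does not define a CR structure on $S^3$.
\item The Brendle-type option is the right one, but it changes the problem. The amplitudes must decay faster than any power of the scale for $J$ to be $C^\infty$ at the accumulation point.
\item At scale $r_k$ the relevant quantity is then no longer the mass of $J$; the $o(1)$ in your expansion is not uniform across infinitely many scales. What matters is the energy contribution of the $k$-th piece. You must show it dominates both the reduction error and the influence of all the other pieces.
\end{itemize}

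This comparison is exactly what the argument of \cite{Afeltra&Pinamonti}, reproduced in the proof of Theorem \ref{thm2} above, supplies. The structure is built as follows:
\begin{itemize}
\item $J=J_0$ off $\bigcup_k B_{Ar_k}(x_k)$;
\item $J=J_{s_k}$ on $B_{r_k}(x_k)$;
\item $J=J_f$ with $|f|\le s_k$ on the annuli $B_{Ar_k}(x_k)\setminus B_{r_k}(x_k)$;
\item the balls $B_{R_k}(x_k)$ are disjoint, with $r_k=2^{-k}$, $R_k=C2^{-k}$ and $s_k=2^{-2^k}$.
\end{itemize}
The super-exponential decay ($s_k r_k^{-m}\to 0$ for every $m$, and $s_{k+1}=s_k^2$) is what makes $J$ smooth at the accumulation point of the $x_k$. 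It is also what lets the $k$-th piece dominate at its own scale.

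After the Lyapunov--Schmidt reduction, Lemmas 4.4--4.7 of \cite{Afeltra&Pinamonti} give $\max_{\Omega_k}\mathcal{I}_J(U_{x,\lambda}+v_{x,\lambda})>\max_{\partial\Omega_k}\mathcal{I}_J(U_{x,\lambda}+v_{x,\lambda})$ on $\Omega_k=\{|x-x_k|<R_k,\ \alpha/R_k<\lambda<\beta/r_k\}$. This is an interior \emph{maximum}, not a minimum, of the reduced functional in each window. It yields a solution concentrating at scale $\lambda\ge\alpha/R_k\to\infty$.

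To close your gap you need an explicit multi-scale deformation together with this kind of quantitative interior-versus-boundary comparison of the reduced energy. The mass expansion cannot provide it.
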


Here, $L_J=-4\Delta_b+R$ is the conformal CR sub-Laplacian when $n=1$.

It is natural to ask whether the corresponding result to 
Theorem \ref{thm1}
is true in the equivariant case. To answer this, we let $f:S^3\to S^3$ be given by 
$$f(w_1,w_2)=(-w_1,w_2)~~\mbox{ for }(w_1,w_2)\in S^3,$$
and let $G=\{f, id_{S^3}\}$. We have the following: 

\begin{theorem}\label{thm2}
There exists a CR structure on $S^3$ which is $G$-invariant and not equivalent to the standard one, 
such that the associated CR Yamabe equation 
\begin{equation}\label{1}
L_Ju=2u^3
\end{equation}
has a set of $G$-invariant solutions $\{u_k\}_{k\in\mathbb{N}}$ with $\max u_k\to \infty$. 
\end{theorem}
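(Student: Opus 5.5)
The plan is to revisit the construction behind Theorem \ref{thm1} from \cite{Afeltra&Pinamonti} and make each step $G$-equivariant, where $G=\{id,f\}$ and $f(w_1,w_2)=(-w_1,w_2)$. That construction, in the spirit of Brendle's Riemannian counterexample, proceeds in three stages. First, one works on the Heisenberg group $\H^1$, identified with $S^3$ minus a point via the Cayley transform. Second, one perturbs the standard CR structure $J_0$ to $J=J_0+\e H$, where the compactly supported deformation tensor $H$ is chosen so that a reduced energy functional has a strict local minimum (or another non-degenerate critical point). Third, one glues in a sequence of bubbles $U_{\xi,\e}$ centred at points $\xi_k$ whose concentration parameters $\e_k\to0$, correcting each by a Lyapunov--Schmidt reduction. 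Because the supports of the deformations are disjoint, infinitely many bubbles can be accommodated, and this yields $\max u_k\to\infty$.

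Under the Cayley transform, $f$ corresponds to a map of $\H^1$. Choosing the pole $(0,-1)$, which $f$ fixes, $f$ becomes the rotation $(z,t)\mapsto(-z,t)$. This rotation is a pseudo-Hermitian isometry of the standard structure on $\H^1$.

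The key steps are as follows.

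\begin{enumerate}
\item Choose the perturbation tensor $H$ to be $f$-invariant, i.e.\ $f^*H=H$ in the appropriate sense. I would either symmetrize the tensor of \cite{Afeltra&Pinamonti} by averaging, $H\mapsto \tfrac12(H+f^*H)$, or place its building blocks centred on the axis $z=0$. The resulting $J$ is then $G$-invariant, $\theta$ stays $G$-invariant, $G\subset I(S^3,J,\theta)$, and $L_J$ commutes with $f$.
\item Restrict the reduction to $G$-invariant functions. The bubble $U_{(0,t_0),\e}$, being radial in $z$, is $f$-invariant. The kernel of the linearized operator at a bubble is spanned by the derivatives in $\e$, $t$, $\mathrm{Re}\,z$ and $\mathrm{Im}\,z$, and the $z$-derivatives are odd under $f$. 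Hence in the $G$-invariant subspace the kernel reduces to $\partial_\e$ and $\partial_t$, so the reduced problem is two-dimensional, in $(\e,t_0)$.
\item The contraction-mapping argument of \cite{Afeltra&Pinamonti} runs verbatim in the closed subspace of $G$-invariant functions. By uniqueness of the fixed point, the correction term is automatically $G$-invariant.
\item By the principle of symmetric criticality, a critical point of the reduced functional restricted to centres $\xi=(0,t_0)$ on the fixed axis gives a genuine solution. It therefore suffices to show that the restricted reduced energy $F(\e,t_0)$ has a stable critical point.
\end{enumerate}

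The solutions are pulled back to $S^3$ and are $G$-invariant. As in \cite{Afeltra&Pinamonti}, the structure is not equivalent to the standard one because the deformation is non-trivial, for instance via non-vanishing Cartan curvature.

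The main obstacle is the fourth step. In \cite{Afeltra&Pinamonti} the reduced energy's expansion (the term quadratic in $H$ paired with the bubble) was analysed with the centre free in all directions. I must check that with the symmetric choice of $H$ the leading term of $F$, as a function of $(\e,t_0)$ alone, still has a strict local minimum. The difficulty is that averaging could cancel the relevant contribution. I would first try to keep the original $H$ if it already has the needed rotational structure, using a building block of the form $H=h(|z|^2,t)\,\cdot$(fixed $U(1)$-covariant tensor), which is invariant even under all rotations $z\mapsto e^{i\phi}z$. With such $H$ the energy integral reduces to a one-dimensional radial computation in which positivity can be checked explicitly. Only if that fails would I modify the profile $h$.
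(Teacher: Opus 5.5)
The overall architecture is the paper's. You use $\widehat f(z,t)=(-z,t)$ on $\H^1$, a $\widehat G$-invariant structure whose building blocks sit on the axis $\H^1_0$ (the paper takes $x_k=(0,0,1/k)$), Lyapunov--Schmidt reduction inside the invariant space over the two-parameter family $\mathcal M_0$ (Proposition~\ref{prop2.1}), and symmetric criticality (Proposition~\ref{prop2.2}). The genuine gap is your step~4. You flag it as the main obstacle, leave it open, and propose a new energy computation, so as written the existence of a critical point is not established.

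The missing observation is that, once the centres lie on the axis, nothing has to be recomputed. By the uniqueness you invoke in step~3, for $x\in\H^1_0$ the unconstrained correction of \cite{Afeltra&Pinamonti} is itself $\widehat f$-invariant, so it coincides with the equivariant one. Hence the equivariant reduced functional is just the restriction of the original one to $\H^1_0\times(0,\infty)$.

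The existence step in \cite{Afeltra&Pinamonti} is also not what you describe, namely nondegeneracy of a strict local minimum of a term quadratic in a perturbation $H$. There the structure equals $J_{s_k}$ on $B_{r_k}(x_k)$ and is glued to $J_0$ on annuli, and the existence step is an interior-versus-boundary comparison $\max_{\Omega_k}>\max_{\partial\Omega_k}$ coming from Lemmas~4.4--4.7 there. Intersecting $\Omega_k$ with $\mathcal M_0$ only shrinks the boundary: its relative boundary lies inside the old one. The interior competitor can be taken centred at $x_k\in\H^1_0$. So the inequality survives, the maximum on $\Omega_k\cap\mathcal M_0$ is interior, and Proposition~\ref{prop2.2} turns it into a genuine $\widehat G$-invariant solution. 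This is all the paper's remark that those lemmas ``still hold in our case'' amounts to. Even under your own description, a strict local extremum at a point of the axis remains strict after restricting to the axis, so cancellation can only arise if you average.

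Your fallback is therefore unnecessary. That fallback replaces the blocks by a fully $U(1)$-invariant deformation $h(|z|^2,t)\cdot(\cdots)$ and redoes the expansion radially. As it stands it is not an argument: it discards building blocks whose energy estimates are already known and substitutes ones whose sign you have not computed. The invariance actually required is only under $z\mapsto -z$, and the paper simply takes its blocks centred on the axis to have it.
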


Here, a function $u$ is $G$-invariant if $\phi^*u=u$ for all $\phi\in G$, 
and a CR structure is $G$-invariant if $\phi^*J=J$ for all $\phi\in G$. 
Therefore, Theorem \ref{thm2} proves the noncompactness of the equivariant 
CR Yamabe problem. 
We remark that the noncompactness of the equivariant Yamabe problem and 
the equivariant Yamabe problem with boundary has been obtained 
in \cite{Ho} and in \cite{Ho&Shin2} respectively.

We also remark that the compactness and noncompactness of the equivariant CR Yamabe equation 
can be subtle. Indeed, if we take $H$ to be the group $\{h,id_{S^3}\}$, 
where 
$$f(w_1,w_2)=(-w_1,-w_2)~~\mbox{ for }(w_1,w_2)\in S^3.$$
On the Rossi sphere $(S^3,J_s)$, 
all $H$-invariant  $u$ to the CR Yamabe equation (\ref{1})
can be viewed as 
a solution to the CR Yamabe equation (\ref{1}) 
on the quotient $(S^3/H,J_s)$. It is known that $(S^3/H, J_s)$ 
is embeddable and has positive CR Yamabe constant. In particular, it follows from Corollary \ref{Afeltra_cor} stated above
that the set of all solutions to the CR Yamabe equation (\ref{1})
on $(S^3/H, J_s)$ is compact. 
Hence, the set of all $H$-invariant solutions to the CR Yamabe equation (\ref{1})
on the Rossi sphere $(S^3,J_s)$ is compact.

\medskip
\textbf{Organization of the paper:} 
In Section~\ref{SectionPreliminaries} we collect the basic pseudohermitian and CR-geometric preliminaries, fix notation, and recall the CR Yamabe operator and its main analytic properties. 
In Section~\ref{proof} we prove Theorem  \ref{thm2}. 
Section~\ref{comp} is dedicated to the blow up analysis and the proof of Theorem \ref{CompactnessTheorem}. We conclude the paper with an Appendix containing some technical computations.

\medskip

\textbf{Acknowledgements.}
The authors thank Andrea Malchiodi for several illuminating discussions on the topic of the paper.\\
P.T.H. was supported  by the National Science and Technology Council (NSTC),
Taiwan, with grant Number: 114-2115-M-032 -003 -MY2\\
A.P. is a member of the Istituto Nazionale di Alta Matematica (INdAM), Gruppo Nazionale per l'Analisi Matematica, la Probabilità e le loro Applicazioni (GNAMPA), and is supported by the University of Trento, the MIUR-PRIN 2022 Project \emph{Regularity problems in sub-Riemannian structures}  Project code: 2022F4F2LH and the INdAM-GNAMPA 2025 Project \emph{Structure of sub-Riemannian hypersurfaces in Heisenberg groups}, CUP ES324001950001.\\
C.A. is supported by the French National Research Agency (ANR) project  EINSTEIN-PPF, grants ANR-23-CE40-0010.

\section{Preliminaries}\label{SectionPreliminaries}
We shortly recall some basic notions about CR geometry, referring to the monograph \cite{Dragomir&Tomassini} for a complete introduction.
A CR structure on a $2n+1$-dimensional manifold $M$ is a $n$-dimensional complex subbundle $\ci{H}$ of $TM\otimes\C$ such that $\ci{H}\cap\overline{\ci{H}}=\{0\}$ and $[\Gamma(\ci{H}),\Gamma(\ci{H})]\subset\Gamma(\ci{H})$.
The real part of $\ci{H}$, $H(M)=\mbox{Re}(\ci{H}+\overline{\ci{H}})$, is called Levi distribution, and it carries the natural complex structure $J$ defined by $J(Z+\overline{Z})=i(Z-\overline{Z})$. $H(M)$ and $J$ determine the CR structure.
The CR structure is said nondegenerate if $H(M)$ is a contact distribution; it is said pseudoconvex if for some contact form $\theta$ the bilinear form $L_{\theta}(Z,\con{W}) = -d\theta(Z,\con{W})$ is positive definite.
In the following we will always assume the hypothesis of pseudoconvexity.
In such a case, the choice of a contact form $\theta$ determines a rich geometric structure, including a subriemannian metric on $H(M)$ (which induces a subriemannian distance $d$), a measure, and a connection called the Tanaka-Webster connection.
By contracting twice the associated curvature tensor through the metric, a scalar curvature invariant known as Webster scalar curvature is obtained.
Any other contact form for a given CR structure is of the form $\widetilde{\theta}=u^{\frac{2}{n}}\theta$ for some smooth positive function $u$, and the Webster scalar curvature associated to $\widetilde{\theta}$ is given by the formula
$$\widetilde{R} =u^{-\frac{n+2}{n}}(-b_n\Delta_b + R)u$$
where $\Delta_b=\operatorname{div}\circ\nabla_b$ (where $\nabla_b$ is the subriemannian gradient) is a second order operator known as sublaplacian, and $b_n=2+\frac{2}{n}$. Equivalently
$$\Delta_bu = u_{\alfa\overline{\alfa}} +  u_{\overline{\alfa}\alfa}.$$

The most important CR manifold is the Heisenberg group, which is the Lie group $\mathbb{H}^n=\mathbb{C}^n\times\mathbb{R}$ with the group law
$$(z,t)\cdot (w,s)=(z+w,t+s+2\mbox{Im}(z\overline{w})).$$
$\H^n$ is endowed with the left invariant CR structure $\ci{H}$ generated by the left-invariant vector fields
$$Z_{\alfa}=\frac{\partial}{\partial z^{\alfa}}+i\overline{z}^{\alfa}\frac{\partial}{\partial t}$$
and the contact form
\begin{equation}\label{DefinizioneTheta}
\theta=dt+i\sum_{\alfa=1}^n(z^{\alfa}d\overline{z}^{\alfa}-\overline{z}^{\alfa}dz^{\alfa}).
\end{equation}
Every CR manifold has local coordinates around any point with values in $\H^n$, called pseudohermitian normal coordinates (see \cite{Jerison&Lee2}).
We call $H(\mathbb{H}^1)=\mbox{Re}(\ci{H}+\overline{\ci{H}})$ the Levi distribution associated to this CR structure, and $J_0$ the complex structure on it.
The Reeb vector field corresponding to this contact structure is $T=\frac{\partial}{\partial t}$.
$\H^n$ is endowed with the one parameter group of CR and group automorphisms
$$\delta_{\lambda}(z,t) = (\lambda z,\lambda^2t),$$
for $\lambda\in(0,\infty)$, called dilations. The pseudohermitian measure (which is obviously a Haar measure) satisfies $(\delta_{\lambda})_{\#}dx = \lambda^{2n+2}dx$; for this reason the number $Q=2n+2$ is called homogeneous dimension of $\H^n$.
The generator of the group of dilations is the vector field
\begin{equation}\label{DefinitionXi}
 \Xi=\sum_{\alfa=1}^n(z Z_{\alfa}+\overline{z}Z_{\overline{\alfa}})+2tT,
\end{equation}
where $Z_{\overline{\alfa}}=\overline{Z_{\alfa}}$.
On $\H^n$ the sublaplacian is equal to $\frac{1}{2}\nabla_b=\sum_{\alfa=1}^n(Z_{\alfa}Z_{\overline{\alfa}}+Z_{\overline{\alfa}}Z_{\alfa})$.
The CR Yamabe equation in $\H^n$
\begin{equation}\label{YamabeEquationHn}
 -b_n\Delta_bu = u^{\frac{n+2}{n}}
\end{equation}
has the solution
$$U(z,t) = c_n\frac{1}{\left( t^2 + (1+|z|^2)^2\right)^{n/2}}$$
which geometrically corresponds to the standard contact form of the CR sphere $S^{2n+1}$ pull-backed to $\H^n$ through the Cayley transform, a CR equivalence analogous to the stereographic projection.
Calling $L_x(y)=x^{-1}y$ the left translation, thanks to invariance properties of the sublaplacian
\begin{equation}\label{0.1}
	U_{x,\lambda}=\lambda U\circ\delta_\lambda\circ L_x
\end{equation}
for $x\in\H^n$ and $\lambda\in(0,\infty)$ forms a family of solutions.

Let $S^1(\H^n)$ be the completion of $C^{\infty}_c(\H^n)$ with respect to the product
$$\bra u,v\ket = \int_{\H^n}\nabla_bu\cdot\nabla_bv$$
which, by the Folland-Stein immersions, is subset of $L^{\frac{2Q}{Q-2}}$.

Then the family of solutions $\{U_{x,\lambda}\}_{x\in\H^n,\lambda>0}$ is stable in $S^1(\H^n)$ the following sense.

\begin{theorem}[Lemma 5 from \cite{Malchiodi&Uguzzoni}]\label{TheoremMalchiodiUguzzoni}
 $u\in S^1(\H^n)$ solves the linearized equation of \eqref{YamabeEquationHn} in $U$
 $$-b_n\Delta_b u = \frac{n+2}{n}U^{\frac{2}{n}}u$$
 if and only if there exist $a,\gamma\in\R$ and $\mi_{\alfa}\in\C$ such that
 $$u= a\Xi U + \gamma TU + \sum_{\alfa=1}^n(\mi_{\alfa}Z_{\alfa}U + \overline{\mi_{\alfa}}Z_{\overline{\alfa}}U). $$
\end{theorem}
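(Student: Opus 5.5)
The plan is to transplant the problem to the CR sphere $S^{2n+1}$ through the Cayley transform, where the linearized equation becomes an eigenvalue problem for the sublaplacian. There it can be solved explicitly by the decomposition of $L^2(S^{2n+1})$ into bigraded spherical harmonics $\mathcal{H}_{p,q}$. We then count dimensions against the explicit solutions listed in the statement.

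\emph{Easy direction.} Differentiating the family \eqref{0.1} of solutions $U_{x,\lambda}$ at $(x,\lambda)=(0,1)$ shows that each of the functions $\Xi U$, $TU$, $Z_\alfa U$, $Z_{\overline{\alfa}}U$ solves the linearized equation. Here $\partial_\lambda$ produces $\Xi U$ (up to the additive term $U$, which is absorbed using homogeneity), and the derivatives in $x$ produce $TU$ and the $Z_\alfa U$, $Z_{\overline\alfa}U$; the latter are combined into real functions using $\mi_\alfa\in\C$. Explicit decay, $|\cdot|\lesssim |y|^{-(Q-2)}$ in the homogeneous norm, shows they lie in $S^1(\H^n)$. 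Their linear independence over $\R$ can be read off from their explicit expressions, for instance from parity in $t$ and from degrees in $z$. This gives a real space of dimension $2n+2$.

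\emph{Converse: transfer to the sphere.} Let $u\in S^1(\H^n)$ solve the linearized equation. First, by Folland--Stein subelliptic regularity, with $U^{2/n}\in L^{\infty}$ decaying like $|y|^{-4}$, $u$ is smooth. A Moser iteration / Kelvin-type argument shows $|u(y)|\le C|y|^{-(Q-2)}$ at infinity. Second, use the CR Kelvin transform, equivalently the conformal covariance of $L_J=-b_n\Delta_b+R$ under the Cayley transform $\mathcal{C}$ with conformal factor $U^{2/n}$. Setting $v=(u/U)\circ\mathcal{C}^{-1}$ on $S^{2n+1}$ minus the pole, the equation becomes
\[
-b_n\Delta_b v + R_0 v=\tfrac{n+2}{n}R_0\,v ,\qquad\text{i.e.}\qquad -\Delta_b v=\tfrac{2R_0}{n\,b_n}v,
\]
with $R_0$ the constant Webster curvature of the standard sphere. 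The decay estimate makes $v$ bounded near the pole, and since $v\in S^1$ a point has zero capacity, so the singularity is removable and $v$ is a weak, hence smooth, solution on all of $S^{2n+1}$.

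\emph{Spectral identification.} Expand $v=\sum_{p,q} v_{p,q}$ with $v_{p,q}\in\mathcal{H}_{p,q}$. By Folland's computation, $-\Delta_b$ acts on $\mathcal{H}_{p,q}$ by a scalar of the form $c\,(2pq+n(p+q))$, which is strictly increasing in $p$ and in $q$. With the normalization of $R_0$, the eigenvalue above equals exactly the value on $\mathcal{H}_{1,0}\oplus\mathcal{H}_{0,1}$, so only $p+q=1$ can occur. Real-valued elements of $\mathcal{H}_{1,0}\oplus\mathcal{H}_{0,1}$ are the restrictions of $\operatorname{Re}(\sum_j a_j w_j)$ with $a\in\C^{n+1}$, a real space of dimension $2n+2$. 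Hence the solution space of the linearized equation has dimension at most $2n+2$, and by the first step it is exactly the span in the statement.

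The main obstacle I expect is the normalization bookkeeping in the last step. One has to verify precisely that the constant $\frac{n+2}{n}$ together with $b_n$ and $R_0$ lands on the eigenvalue of the first bigraded level and not between levels. A secondary difficulty is the decay estimate needed for the removable-singularity argument at the pole.
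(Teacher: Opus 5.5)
The paper gives no proof of this statement; it imports it from Malchiodi--Uguzzoni. Your converse route (Cayley transform, removal of the point at infinity, Folland's decomposition into $\mathcal{H}_{p,q}$) is the standard one and is sound. The gap is in the ``easy direction'' and in your last sentence. You are forcing the statement as written, and with the paper's conventions two of the listed functions are \emph{not} solutions.

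First, $\Xi U$. The dilation family of solutions is $\lambda^{n}U\circ\delta_\lambda$; the prefactor $\lambda$ in \eqref{0.1} is correct only for $n=1$. So $\partial_\lambda|_{\lambda=1}$ gives $\Xi U+nU$, and the term $nU$ cannot be ``absorbed by homogeneity''. Indeed $U$ is not in the kernel, because $-b_n\Delta_b U=U^{\frac{n+2}{n}}\neq\frac{n+2}{n}U^{\frac2n}U$, and hence neither is $\Xi U$. Concretely, with $\rho=t^2+(1+|z|^2)^2$ one has $\Xi U+nU=nc_n\rho^{-\frac n2-1}(1-t^2-|z|^4)$. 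This is exactly the function $\psi=\frac{Q-2}{2}U+\Xi U$ the paper itself uses later.

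Second, $Z_\alpha U$. Since $\Delta_b$ is left-invariant, differentiating $U\circ L_x$ in $x$ produces $-\widetilde X U$ with $\widetilde X$ \emph{right}-invariant. For the group law in the Preliminaries these are $\widetilde Z_\alpha=\partial_{z_\alpha}-i\overline{z}_\alpha\partial_t$, not $Z_\alpha=\partial_{z_\alpha}+i\overline{z}_\alpha\partial_t$. And $Z_\alpha U$ genuinely fails to be in the kernel. Use the Cayley map $\zeta_j=\frac{2z_j}{i+w}$, $\zeta_{n+1}=\frac{i-w}{i+w}$ with $w=t+i|z|^2$, which is CR for this structure since $Z_{\overline\alpha}$ kills $z_j$ and $w$. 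Then $\widetilde Z_{\overline\alpha}U/U=-\frac{in}{2}\zeta_\alpha$, whereas $Z_{\overline\alpha}U/U=-\frac{n z_\alpha}{1+|z|^2+it}$. The latter does not lie in $\mathcal H_{1,0}\oplus\mathcal H_{0,1}$: the weight-$e_\alpha$ part of that space under $z_\alpha\mapsto e^{i\phi}z_\alpha$ is spanned by $\zeta_\alpha=\frac{-2iz_\alpha}{1+|z|^2-it}$ alone.

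So your final step, ``it is exactly the span in the statement'', fails: that span is $(2n+2)$-dimensional but is not contained in the solution space. What your spectral argument actually proves is that the kernel equals $U\cdot\bigl(\operatorname{Re}(\mathcal H_{1,0}\oplus\mathcal H_{0,1})\circ\mathcal C\bigr)$. This space is spanned by $\frac{Q-2}{2}U+\Xi U$, $TU$, $\widetilde Z_\alpha U$ and $\widetilde Z_{\overline\alpha}U$. You should state and prove this corrected version, which is what the later arguments in the paper actually use, rather than paper over the discrepancy.

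Two smaller remarks. The normalization bookkeeping you worry about resolves itself. Since $(\Xi U+nU)/U=n\operatorname{Re}\zeta_{n+1}\circ\mathcal C$, a nonzero solution already sits in the level $p+q=1$. The value of $2pq+n(p+q)$ at $p+q=1$ is attained at no other $(p,q)$, so all other levels are excluded without tracking any constants. Also, the decay estimate $|u|\lesssim|y|^{2-Q}$ is unnecessary. Conformal invariance of the energy makes $u\mapsto (u/U)\circ\mathcal C^{-1}$ an isometry onto $S^{1,2}(S^{2n+1})$. Zero capacity of a point then lets you test the equation against all smooth functions on the sphere.
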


Similarly to Riemannian geometry, there exists a tensor which characterizes conformal flatness (except in the lowest dimensions).
We recall that a CR manifold is called spherical if it is locally CR equivalent with $\H^n$ (or equivalently to $S^{2n+1}$).
The Chern tensor is defined as the tensor
$$S_{\beta}{}^{\alfa}{}_{\lambda\overline{\sigma}} = R_{\beta}{}^{\alfa}{}_{\lambda\overline{\sigma}} - \frac{1}{n+2}\left(R_{\beta}{}^{\alfa}h_{\lambda\overline{\sigma}} + R_{\lambda}{}^{\alfa}h_{\beta\overline{\sigma}} +\delta_{\beta}{}^{\alfa}R_{\lambda\overline{\sigma}} + \delta_{\lambda}{}^{\alfa}R_{\beta\overline{\sigma}}\right) +$$
$$+\frac{R}{(n+1)(n+2)}\left( \delta_{\beta}{}^{\alfa}h_{\lambda\overline{\sigma}} + \delta_{\lambda}{}^{\alfa}h_{\beta\overline{\sigma}} \right)$$
where $R_{\beta}{}^{\alfa}{}_{\lambda\overline{\sigma}}$ is the curvature tensor, $h_{\lambda\overline{\sigma}}$ is the metric and $R_{\beta}{}^{\alfa}=R_{\beta}{}^{\sigma}{}_{\sigma\overline{\gamma}}h^{\alfa\overline{\gamma}}$ is the pseudohermitian Ricci tensor.

Then the following theorem of Chern and Moser \cite{Chern&Moser} holds (see Section 7.3 in \cite{Ivanov&Vassilev} for a more modern presentation and an alternative proof).

\begin{theorem}
 If $n\ge 2$ then $M$ is spherical if and only if $S=0$.
\end{theorem}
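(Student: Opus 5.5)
This is the Chern–Moser theorem, stated in the paper as a cited result (\cite{Chern&Moser}, see also Section 7.3 of \cite{Ivanov&Vassilev}). We would not reprove it from scratch; the plan below reconstructs the argument along the lines of the modern proof.

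\textbf{Necessity.} The Chern tensor $S$ is a CR invariant. Under a change of contact form $\widetilde{\theta}=e^{2f}\theta$, the curvature tensor changes by terms built from $h$, the second covariant derivatives of $f$ and $|\nabla_b f|^2$. The traces subtracted in the definition of $S$ remove exactly these terms, and a direct computation with the transformation law of the Tanaka--Webster connection gives $\widetilde{S}_{\beta}{}^{\alfa}{}_{\lambda\overline{\sigma}}=S_{\beta}{}^{\alfa}{}_{\lambda\overline{\sigma}}$. On $\H^n$ with the contact form \eqref{DefinizioneTheta}, the Tanaka--Webster connection is flat in the left-invariant frame $Z_\alfa$, so $S=0$ there. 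Hence if $M$ is spherical, then locally $\theta$ is conformal to the pullback of the Heisenberg form, and $S=0$.

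\textbf{Sufficiency.} This is the hard part. We would build the Cartan normal connection, that is, a Cartan geometry modeled on $SU(n+1,1)/P$. Using a choice of $\theta$, we assemble a $\mathfrak{su}(n+1,1)$-valued connection form from three pieces:
\begin{itemize}
\item $\theta$ and the coframe $\theta^\alfa$;
\item the Tanaka--Webster connection forms;
\item a correction term given by the CR Schouten-type tensor and the torsion.
\end{itemize}
We normalize it so that its curvature is horizontal and satisfies the trace conditions; this makes it independent of the choice of $\theta$. We then compute the curvature. Its lowest-order (harmonic) component is exactly $S$ when $n\ge2$, and the remaining components are expressed via Bianchi identities as covariant derivatives of $S$; for $n=1$ the harmonic part is instead the Cartan tensor. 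So $S=0$ forces the Cartan curvature to vanish. A flat Cartan connection is locally isomorphic to the Maurer--Cartan form of the model, and this yields a local CR diffeomorphism to $S^{2n+1}$, equivalently to $\H^n$ via the Cayley transform.

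\textbf{Main obstacle.} The difficulty is the Bianchi-identity step showing that every curvature component is controlled by $S$. Concretely, this means proving that the divergence of $S$ determines the Cotton-type tensor when $n\ge2$. We would attempt it by contracting the differentiated Bianchi identity.
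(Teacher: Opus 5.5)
The paper does not prove this statement. It quotes it from Chern--Moser and refers to Section 7.3 of Ivanov--Vassilev for an alternative proof.

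Your outline is a correct sketch of the classical argument:
\begin{itemize}
\item necessity, from the CR invariance of $S$ and the flat, torsion-free Tanaka--Webster connection of $\H^n$;
\item sufficiency, from the normal Cartan connection modeled on $SU(n+1,1)/P$, whose only harmonic curvature for integrable structures with $n\ge 2$ is $S$.
\end{itemize}
This is Chern's half of the original Chern--Moser proof, in parabolic-geometry language. As far as I can reconstruct it, it is not the route of the ``modern'' alternative proof the paper points to. Ivanov--Vassilev appear to stay on $M$ and argue as in the Weyl--Schouten proof of Riemannian conformal flatness:
\begin{itemize}
\item they seek $\widetilde{\theta}=e^{2f}\theta$ whose pseudohermitian Schouten tensor and Tanaka--Webster torsion vanish;
\item this is an overdetermined system for $f$, and its integrability conditions are the CR Cotton-type tensors;
\item these tensors vanish when $S=0$ and $n\ge 2$, by the contracted Bianchi identity;
\item the new connection is then flat and torsion free, and its structure equations integrate to a pseudohermitian chart into $\H^n$.
\end{itemize}

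What your route buys:
\begin{itemize}
\item It gives a canonical object whose flatness is literally local equivalence with the model.
\item It covers $n=1$ (the Cartan tensor) in the same framework.
\item It lets you avoid the explicit computation you flag as the main obstacle. The Bianchi identity together with the normalization $\partial^*\kappa=0$ forces the lowest nonvanishing homogeneous component of the curvature to be harmonic (\v{C}ap--Slov\'ak). So vanishing of the harmonic part kills the whole curvature, with no need to contract the differentiated Bianchi identity by hand.
\end{itemize}
Its price is constructing and normalizing the Cartan connection.

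The Tanaka--Webster route is more elementary. It produces the conformal factor explicitly, lands directly in $\H^n$ with its standard contact form, and transfers to quaternionic contact geometry. But it requires the conformal transformation laws of the Tanaka--Webster curvature and torsion. It also needs, as an explicit computation, precisely the identity you isolate: for $n\ge 2$ the divergence of $S$ determines the Cotton-type tensor.
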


\section{Proof of Theorem \ref{thm2}}\label{proof}
In this section we will work in the first Heisenberg group $\H^1$.
We will consider various CR structures whose Levi distribution coincides with $H(\H^1)$, and we will fix the contact form $\theta$ defined in equation \eqref{DefinizioneTheta}.
Therefore the CR structure will be uniquely determined by the complex structure $J$ on $H(\H^1)$, and we will denote by $\nabla_J$, $\Delta_J$ and so forth the various pseudohermitian quantities when the dependence on $J$ will be relevant.

We define 
$\widehat{f}:\mathbb{H}^1\to\mathbb{H}^1$
by 
$$\widehat{f}(z,t)=(-z,t).$$
Let $\widehat{G}=\{\widehat{f},id_{\mathbb{H}^1}\}$.

Let $X$ be the Hilbert space 
$$X=\{u\in L^4(\mathbb{H}^1): \|\nabla_{J_0} u\|_{L^2}<\infty\}$$
with the inner product
$$\langle u,v\rangle_X=\int_{\mathbb{H}^1}\nabla_{J_0} u\cdot \nabla_{J_0} v.$$
Let $X_{\widehat{G}}$ be
the subspace of $X$ which contains all the $\widehat{G}$-invariant functions in $X$, i.e. 
$$X_{\widehat{G}}=\left\{u\in X: u\mbox{ is $\widehat{G}$-invariant}\right\}.$$
Folland-Stein embedding asserts that there exists $K$ such that
$$\|u\|_{L^4}\leq K\|u\|_X.$$

We define
$$\mathcal{M}=\{U_{x,\lambda}: x\in\mathbb{H}^1,\lambda\in (0,\infty)\}$$
where $U_{x,\lambda}$ is given as in (\ref{0.1}). We also let 
$$\mathcal{E}_{(x,\lambda)}=\mbox{span}\{Z_1U_{x,\lambda}, Z_{\overline{1}}U_{x,\lambda},
TU_{x,\lambda}, \Xi U_{x,\lambda}\}^{\perp}.$$
That is to say, 
\begin{equation*}
\begin{split}
\mathcal{E}_{(x,\lambda)}=\big\{u\in X: \langle u,Z_1U_{x,\lambda}\rangle_X=\langle u,Z_{\overline{1}}U_{x,\lambda}\rangle_X=
\langle u,TU_{x,\lambda}\rangle_X=\langle u,\Xi U_{x,\lambda}\rangle_X=0\big\}.
\end{split}
\end{equation*}
We also let 
$$\mathcal{M}_0=\{U_{x,\lambda}: x\in\mathbb{H}^1_0,\lambda\in (0,\infty)\}$$
where 
$$\mathbb{H}^1_0=\{x=(z,t)\in\mathbb{H}^1: z=0\},$$
and 
$$\mathcal{E}_{(x,\lambda),\widehat{G}}
=\{u\in \mathcal{E}_{(x,\lambda)}: u\mbox{ is $\widehat{G}$-invariant}\}.$$
In general, $U_{x,\lambda}\not\in \mathcal{E}_{(x,\lambda),\widehat{G}}$. 
But $U_{x,\lambda}\in \mathcal{E}_{(x,\lambda),\widehat{G}}$ 
whenever $x\in \mathbb{H}^1_0$. 

Note that the CR Yamabe equation for $J$ is the Euler-Lagrange equation for the functional 
$$\mathcal{I}_J(u)=\int_{\mathbb{H}^1}uL_Ju-\int_{\mathbb{H}^1}u^4$$
in the space $X$.

\begin{prop}\label{prop2.1}
There exists a constant $\alpha$ such that if $J$ is a $\widehat{G}$-invariant 
CR structure on $\mathbb{H}^1$ coinciding with $J_0$ on $\mathbb{H}^1\setminus B_1(0)$
and such that $\|J-J_0\|_{\Gamma^2}\leq \alpha$, 
then, for every $(x,\lambda)\in \mathbb{H}^1_0\times(0,\infty)$, 
there exists a unique $v_{x,\lambda}\in \mathcal{E}_{(x,\lambda),\widehat{G}}$ 
with $\|v\|\lesssim\alpha$ satisfying 
$$\pi_{\mathcal{E}_{(x,\lambda),\widehat{G}}}\big(\nabla\mathcal{I}_J(U_{x,\lambda}+v_{x,\lambda})\big)=0.$$
\end{prop}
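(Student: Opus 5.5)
This is a Lyapunov–Schmidt reduction in the $\widehat{G}$-invariant subspace. The plan is to apply the implicit function theorem (in its contraction-mapping form) on $\mathcal{E}_{(x,\lambda),\widehat{G}}$, with constants uniform in $(x,\lambda)$.

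\textbf{Step 1: setup and reduction to a fixed point.} Write $\mathcal{I}_J=\mathcal{I}_{J_0}+(\mathcal{I}_J-\mathcal{I}_{J_0})$. Since $\theta$ is fixed and $J=J_0$ outside $B_1(0)$, the difference $\int uL_Ju-\int uL_{J_0}u$ is a quadratic form in $\nabla u$ whose coefficients are bounded by $\|J-J_0\|_{\Gamma^2}$. This includes the Webster curvature $R_J$, which involves up to two derivatives of $J$. Hence
\[
\|\nabla\mathcal{I}_J(u)-\nabla\mathcal{I}_{J_0}(u)\|\lesssim \alpha\|u\|,\qquad \|\nabla^2\mathcal{I}_J(u)-\nabla^2\mathcal{I}_{J_0}(u)\|\lesssim\alpha .
\]
The $R_J u^2$ term is handled on $B_1$ through Hölder's inequality and the embedding $\|u\|_{L^4}\le K\|u\|_X$. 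Since $U_{x,\lambda}$ is a critical point of $\mathcal{I}_{J_0}$, it follows that $\|\nabla\mathcal{I}_J(U_{x,\lambda})\|\lesssim\alpha$ uniformly in $(x,\lambda)$, because $\|U_{x,\lambda}\|_X$ is independent of $(x,\lambda)$ by dilation and translation invariance.

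\textbf{Step 2: equivariance.} Because $J$, $\theta$ and $J_0$ are $\widehat{G}$-invariant ($\widehat f$ is a pseudohermitian isometry of both structures), $\mathcal{I}_J$ is $\widehat{G}$-invariant. Consequently $\nabla\mathcal{I}_J$ maps $\widehat{G}$-invariant functions to $\widehat{G}$-invariant functions. For $x\in\mathbb{H}^1_0$ the function $U_{x,\lambda}$ is invariant. Moreover $TU_{x,\lambda}$ and $\Xi U_{x,\lambda}$ are invariant, while $Z_1U_{x,\lambda},Z_{\bar1}U_{x,\lambda}$ are odd, so the projection $\pi_{\mathcal{E}_{(x,\lambda),\widehat{G}}}$ commutes with the $\widehat{G}$-symmetrization. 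The equation can therefore be posed entirely inside $X_{\widehat G}$.

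\textbf{Step 3: invertibility of the linearized operator.} Set
\[
A_{x,\lambda}=\pi_{\mathcal{E}_{(x,\lambda),\widehat{G}}}\circ\nabla^2\mathcal{I}_{J_0}(U_{x,\lambda})
\]
restricted to $\mathcal{E}_{(x,\lambda),\widehat{G}}$. The claim is that $A_{x,\lambda}$ is invertible with $\|A_{x,\lambda}^{-1}\|\le C$, where $C$ is independent of $(x,\lambda)$. This is the main obstacle. The operator has the form identity minus a compact operator (multiplication by $U^2$ composed with the inverse sublaplacian). Its kernel on $X$ is exactly the span in Theorem \ref{TheoremMalchiodiUguzzoni}, which is removed by the orthogonality conditions, so $A_{x,\lambda}$ is Fredholm of index zero and injective, hence invertible. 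Uniformity follows by conjugating with the isometries $u\mapsto\lambda u\circ\delta_\lambda\circ L_x$ of $X$. These isometries map $\mathcal{E}_{(0,1)}$ to $\mathcal{E}_{(x,\lambda)}$ and, for $x\in\mathbb{H}^1_0$, commute with $\widehat f$, since $\widehat f$ commutes with dilations and with left translation by $(0,t)$. Restricting to invariant functions preserves invertibility because the operator commutes with the symmetry.

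\textbf{Step 4: contraction.} Solve $\pi\nabla\mathcal{I}_J(U+v)=0$ in the form
\[
v=-A^{-1}\pi\bigl[\nabla\mathcal{I}_J(U)+(\nabla^2\mathcal{I}_J(U)-\nabla^2\mathcal{I}_{J_0}(U))v+N(v)\bigr],
\]
where $N(v)$ is the superlinear remainder, controlled via $L^4$ by $\|v\|^2+\|v\|^3$. For $\alpha$ small, Step 1 shows that the right-hand side is a contraction on the ball of radius $C'\alpha$ in $\mathcal{E}_{(x,\lambda),\widehat{G}}$. This gives existence and uniqueness of $v_{x,\lambda}$ with $\|v_{x,\lambda}\|\lesssim\alpha$.
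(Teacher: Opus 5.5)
Your proposal is correct and follows the same route as the paper. The paper's proof simply defers to the Lyapunov--Schmidt reduction by contraction in the proof of Proposition 3.2 of \cite{Afeltra&Pinamonti}, and your Steps 2--3 make explicit the equivariance bookkeeping that the paper leaves implicit: in $X_{\widehat G}$ only the even kernel directions $TU_{x,\lambda}$ and $\Xi U_{x,\lambda}$ need to be removed, and for $x\in\mathbb{H}^1_0$ the isometries $u\mapsto\lambda u\circ\delta_\lambda\circ L_x$ commute with $\widehat f$, which gives invertibility uniform in $(x,\lambda)$. The only addition needed in Step 4 is the Lipschitz bound $\|N(v_1)-N(v_2)\|\lesssim(\|v_1\|+\|v_2\|+\|v_1\|^2+\|v_2\|^2)\|v_1-v_2\|$, which the contraction requires and which follows from the same $L^4$ estimates.
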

\begin{proof}
The proof is exactly the same as the proof of \cite[Proposition 3.2]{Afeltra&Pinamonti}. 
\end{proof}

We have the following: 

\begin{prop}\label{prop2.2}
If $(x,\lambda)\in \mathbb{H}^1_0\times (0,\infty)$
is a critical point of $\mathcal{I}_J(U_{x,\lambda}+v_{x,\lambda})$, 
then $\nabla\mathcal{I}_J(U_{x,\lambda}+v_{x,\lambda})=0$. 
\end{prop}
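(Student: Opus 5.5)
The plan is the standard finite-dimensional (Lyapunov--Schmidt) reduction argument of \cite[Proposition 3.3]{Afeltra&Pinamonti}, adapted so that only the $\widehat{G}$-invariant directions are used. Fix $(x,\lambda)\in\mathbb{H}^1_0\times(0,\infty)$ and write $w=U_{x,\lambda}+v_{x,\lambda}$.

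\textbf{Step 1: the gradient is $\widehat{G}$-invariant.} Since $J$ is $\widehat{G}$-invariant and $\widehat{f}$ preserves $\theta$, the functional $\mathcal{I}_J$ is invariant under $u\mapsto u\circ\widehat{f}$. For $x=(0,t_0)$ the function $U_{x,\lambda}$ depends only on $|z|$ and $t$, hence is $\widehat{G}$-invariant. By construction $v_{x,\lambda}$ is $\widehat{G}$-invariant. Therefore $\nabla\mathcal{I}_J(w)\in X_{\widehat{G}}$. Its projection onto $\mathcal{E}_{(x,\lambda),\widehat{G}}$ vanishes by Proposition~\ref{prop2.1}, so $\nabla\mathcal{I}_J(w)$ lies in the $\widehat{G}$-invariant part of $\operatorname{span}\{Z_1U_{x,\lambda},Z_{\overline{1}}U_{x,\lambda},TU_{x,\lambda},\Xi U_{x,\lambda}\}$.

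\textbf{Step 2: only two directions survive.} Under $z\mapsto -z$, the functions $Z_1U_{x,\lambda}$ and $Z_{\overline 1}U_{x,\lambda}$ change sign, while $TU_{x,\lambda}$ and $\Xi U_{x,\lambda}$ are invariant. Here $\Xi$ is understood as the dilation generator recentred at $x$; this is harmless since $x\in\mathbb{H}^1_0$ and the relevant left translations commute with $\widehat f$. The invariant part of the span is therefore $\operatorname{span}\{TU_{x,\lambda},\Xi U_{x,\lambda}\}$, and we can write
\[
\nabla\mathcal{I}_J(w)=a\,\Xi U_{x,\lambda}+\gamma\, TU_{x,\lambda}.
\]

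\textbf{Step 3: use the two parameters.} The parameters $(t_0,\lambda)$ match these two directions exactly. Direct differentiation of \eqref{0.1} gives $\partial_{t_0}U_{x,\lambda}=-\lambda^2 TU_{x,\lambda}$ and $\partial_\lambda U_{x,\lambda}=\lambda^{-1}\Xi U_{x,\lambda}$ (up to harmless normalizations). Criticality means, for $s\in\{t_0,\lambda\}$,
\[
0=\langle \nabla\mathcal{I}_J(w),\partial_s U_{x,\lambda}+\partial_s v_{x,\lambda}\rangle_X .
\]
For the terms involving $\partial_s v_{x,\lambda}$, differentiate the orthogonality relations $\langle v_{x,\lambda},\Xi U_{x,\lambda}\rangle_X=\langle v_{x,\lambda},TU_{x,\lambda}\rangle_X=0$ with respect to $s$. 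This gives
\[
\langle \partial_s v,\Xi U\rangle_X=-\langle v,\partial_s\Xi U\rangle_X=O(\alpha)
\]
(suitably scaled in $\lambda$), and similarly for $TU$, using $\|v\|\lesssim\alpha$.

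\textbf{Step 4: conclude.} Substituting the expression from Step 2, the criticality conditions become a $2\times2$ linear system in $(a,\gamma)$. Its matrix is the Gram matrix of $\{\Xi U_{x,\lambda},TU_{x,\lambda}\}$, after rescaling by the appropriate powers of $\lambda$, plus an $O(\alpha)$ perturbation. By scale and translation invariance the rescaled Gram matrix is independent of $(x,\lambda)$. It is nondegenerate because the two functions are linearly independent; in fact they are orthogonal, by parity in $t$ around $t_0$. For $\alpha$ small the perturbed matrix is therefore invertible, which forces $a=\gamma=0$, i.e. $\nabla\mathcal{I}_J(w)=0$.

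\textbf{Main obstacle.} The delicate point is Step 3: one needs differentiability of $(x,\lambda)\mapsto v_{x,\lambda}$ with the bound $\|\partial_s v_{x,\lambda}\|\lesssim \alpha$ (after scaling). I would obtain this from the implicit function theorem used in Proposition~\ref{prop2.1}, exactly as in \cite{Afeltra&Pinamonti}, now carried out inside the closed subspace $X_{\widehat G}$.
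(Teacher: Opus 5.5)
Your proof is correct, but it is organized differently from the paper's. The paper handles the finite-dimensional step in one sentence: ``by the Lyapunov--Schmidt method'' the differential of $\mathcal{I}_J$ at $U_{x,\lambda}+v_{x,\lambda}$ vanishes on all of $X_{\widehat{G}}$. Its actual effort goes into symmetric criticality. It solves $L_Jw=2(U_{x,\lambda}+v_{x,\lambda})^3$, notes that $w$ is $\widehat{G}$-invariant by uniqueness, and tests with $U_{x,\lambda}+v_{x,\lambda}-w\in X_{\widehat{G}}$ to force $w=U_{x,\lambda}+v_{x,\lambda}$.

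You reverse the roles. Symmetric criticality comes for free, because $u\mapsto u\circ\widehat{f}$ is an isometry of $X$ leaving $\mathcal{I}_J$ invariant, so the gradient at an invariant point is invariant. You then carry out the reduction step explicitly. Its key input is that $Z_1U_{x,\lambda}$ and $Z_{\overline{1}}U_{x,\lambda}$ are odd under $\widehat{f}$, while $TU_{x,\lambda}$ and $\Xi U_{x,\lambda}$ are even. Hence the invariant part of the four-dimensional kernel is two-dimensional and matches the two parameters $(t_0,\lambda)$ of $\mathcal{M}_0$.

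This dimension count is exactly what the paper's first sentence tacitly relies on. Your version is therefore more self-contained and explains why restricting centers to $\mathbb{H}^1_0$ loses nothing. It also avoids needing solvability of $L_Jw=f$ in $X$ and coercivity of $\int(4|\nabla_b\phi|^2+R\phi^2)$, which the paper's test-function computation uses (the term $\int R(U_{x,\lambda}+v_{x,\lambda}-w)^2$ is not displayed there). The paper's route is shorter and modular: it separates a Palais-type principle from a generic reduction inside $X_{\widehat{G}}$.

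Two minor corrections.

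\textbf{The $\lambda$-derivative.} Let $\Xi_x$ be the generator recentred at $x$. Then $\partial_\lambda U_{x,\lambda}=\lambda^{-1}(U_{x,\lambda}+\Xi_xU_{x,\lambda})$, and $\partial_{t_0}U_{x,\lambda}=-TU_{x,\lambda}$. The extra $U_{x,\lambda}$ term is not a normalization, so your $2\times 2$ matrix is not literally a Gram matrix when $\mathcal{E}$ is defined with $\Xi U$. It is still nondegenerate:
\begin{itemize}
\item Scale invariance of $\|U_{x,\lambda}\|_X$ gives $\langle U_{x,\lambda},U_{x,\lambda}+\Xi_xU_{x,\lambda}\rangle_X=0$, so the diagonal entry is $\langle \Xi_xU_{x,\lambda},U_{x,\lambda}+\Xi_xU_{x,\lambda}\rangle_X=\|U+\Xi U\|_X^2>0$.
\item The off-diagonal entries vanish by the isometric symmetry $(z,t)\mapsto(\overline{z},2t_0-t)$.
\end{itemize}
Recentring $\Xi$ is harmless simply because $\Xi U_{x,\lambda}-\Xi_xU_{x,\lambda}=2t_0TU_{x,\lambda}$, so the spans coincide.

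\textbf{The ``main obstacle''.} The bound $\|\partial_sv_{x,\lambda}\|\lesssim\alpha$ is not needed. Your Step 3 pairs $\partial_sv_{x,\lambda}$ only against elements of the span. It therefore uses only $C^1$ dependence of $v_{x,\lambda}$ on $(t_0,\lambda)$ together with $\|v_{x,\lambda}\|\lesssim\alpha$.
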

\begin{proof}
By the Lyapunov-Schmidt method, we can conclude that 
\begin{equation}\label{0.3}
\int_{\mathbb{H}^1}\left(4\langle\phi,U_{x,\lambda}+v_{x,\lambda}\rangle_X+R(U_{x,\lambda}+v_{x,\lambda})\phi
-2(U_{x,\lambda}+v_{x,\lambda})^{3}\phi\right)=0
\end{equation}
for all $\phi\in X_{\widehat{G}}$.   
Let $w\in X$ be the unique solution to 
\begin{equation}\label{0.2}
L_J(w)=2(U_{x,\lambda}+v_{x,\lambda})^3. 
\end{equation}
Since $J$ is  $\widehat{G}$-invariant  and 
$U_{x,\lambda}+v_{x,\lambda}$ is  $\widehat{G}$-invariant, 
we conclude that $w$ is $\widehat{G}$-invariant. 
Putting $U_{x,\lambda}+v_{x,\lambda}-w\in  X_{\widehat{G}}$
into (\ref{0.3}) and using (\ref{0.2}), we obtain 
\begin{equation*}
\begin{split}
0&=\int_{\mathbb{H}^1}\Big(4\langle U_{x,\lambda}+v_{x,\lambda}-w,U_{x,\lambda}+v_{x,\lambda}\rangle_X\\
&\hspace{4mm}+R(U_{x,\lambda}+v_{x,\lambda})(U_{x,\lambda}+v_{x,\lambda}-w)
-2(U_{x,\lambda}+v_{x,\lambda})^{3}(U_{x,\lambda}+v_{x,\lambda}-w)\Big)\\
&=\int_{\mathbb{H}^1}\Big(4\langle U_{x,\lambda}+v_{x,\lambda}-w,U_{x,\lambda}+v_{x,\lambda}\rangle_X+4(U_{x,\lambda}+v_{x,\lambda}-w)\Delta_bw
\Big)\\
&=\int_{\mathbb{H}^1}4\langle U_{x,\lambda}+v_{x,\lambda}-w,U_{x,\lambda}+v_{x,\lambda}-w\rangle_X.
\end{split}
\end{equation*}
From this, we conclude that 
$w=U_{x,\lambda}+v_{x,\lambda}$. 
In particular, it follows from (\ref{0.2}) that 
$U_{x,\lambda}+v_{x,\lambda}$ satisfies 
$$L_J(U_{x,\lambda}+v_{x,\lambda})=2(U_{x,\lambda}+v_{x,\lambda})^3.$$
This proves the assertion.   
\end{proof}

Let $x_k=(0,0,\frac{1}{k})\in\mathbb{H}^1_0$
and let $r_k$, $R_k$, $s_k$ be sequences converging to zero
such that the ball $B_{R_k}(x_k)$ are disjoint. 
Let $J$ be a $\widehat{G}$-invariant CR structure 
coinciding with $J_0$ in $\mathbb{H}^1\setminus\bigcup B_{Ar_k}(x_k)$, 
with $J_{s_k}$ on $B_{r_k}(x_k)$, and with $J_f$ on 
$B_{Ar_k}(x_k)\setminus B_{r_k}(x_k)$, where $|f|\leq s_k$. 

Let us define 
$$\Omega_k=\left\{U_{x,\lambda}: x\in\mathbb{H}^1_0, |x-x_k|<R_k, \frac{\alpha}{R_k}<\lambda<\frac{\beta}{r_k}\right\}\subset\mathcal{M}_0$$
with $\alpha$ and $\beta$ to be chosen later.

Then we want to show that, up to choosing the parameters appropriately, 
for every $k$ there exists a critical point of $\mathcal{I}_J(U_{x,\lambda}+v_{x,\lambda})$
in $\Omega_k$, that is, a $\widehat{G}$-invariant solution to the 
CR Yamabe equation for $J$ that is approximately a bubble centered 
at $x_k$.

Note that Lemmas 4.4-4.7 in \cite{Afeltra&Pinamonti} still hold in our case. 

\begin{proof}[Proof of Theorem \ref{thm2}]
We choose the  $\widehat{G}$-invariant CR structure as above, 
and $r_k=2^{-k}$, $R_k=C 2^{-k}$ with $C$ large enough, 
$\alpha,\beta\gg 1$, $s_k=2^{-2^k}$. 
Thanks to Lemmas 4.4-4.7 in \cite{Afeltra&Pinamonti}, 
we have 
$$\max_{\Omega_k}\mathcal{I}_J(U_{x,\lambda}+v_{x,\lambda,s_k})>\max_{\partial\Omega_k}\mathcal{I}_J(U_{x,\lambda}+v_{x,\lambda,s_k}),$$
and therefore there exists a critical point of $\mathcal{I}_J(U_{x,\lambda}+v_{x,\lambda})$ restricted 
to $\mathcal{M}_0$ in $\Omega_k$. 
By Proposition \ref{prop2.2}, it is a free critical point, and 
therefore a $\widehat{G}$-invariant solution to (\ref{1}). 
Finally, since 
\begin{equation*}
\begin{split}
&|B_{r_k}(x_k)|^{\frac{1}{4}}\max_{B_{r_k}(x_k)}(U_{x,\lambda}+v_{x,\lambda,s_k})\\
&\geq \|(U_{x,\lambda}+v_{x,\lambda,s_k})\|_{L^4(B_{r_k}(x_k))}
\geq \|U_{x,\lambda}\|_{L^4(B_{r_k}(x_k))}-\|v_{x,\lambda,s_k}\|_X,
\end{split}
\end{equation*}
we have 
$\max_{B_{r_k}(x_k)}(U_{x,\lambda}+v_{x,\lambda})\to\infty$. 
This completes the proof of Theorem \ref{thm2}. 
\end{proof}

\section{Compactness}\label{comp}
We recall the Pohozaev identity for CR manifolds (see \cite[Proposition 3.3]{Afeltra}), which is the Pohozaev identity for $\H^n$ by Garofalo and Lanconelli in \cite{GaL} written in pseudohermitian normal coordinates.
In the following, when working in such coordinates, we will use a circle superscript to indicate objects coming from $\H^n$ through these coordinates ($\overset{\circ}{\Delta}_b$, $\overset{\circ}{\nabla}_b$,...).

\begin{prop}\label{Pohozaev}
Let $\overline{x}\in M$ and $u$ be a solution of 
$$-b_n\Delta_bu+Ru=\widetilde{R}u^p.$$
Then, in pseudohermitian normal coordinates around $\overline{x}$, 
the following holds 
\begin{equation*}
\begin{split}
&\int_{B_r(\overline{x})}\Bigg(\frac{1}{b_n}\left(\frac{2n+2}{p+1}-n\right)\widetilde{R}u^{p+1}-\frac{1}{b_n}Ru^2+\frac{1}{b_n(p+1)}
\Xi(\widetilde{R})u^{p+1}\\
&\hspace{12mm}-\frac{1}{2b_n}\Xi(R)u^2-(\Xi u+nu)(\Delta_bu-\overset{\circ}{\Delta}_b u)\Bigg)d\overset{\circ}{V}\\
&=\int_{\partial B_r(\overline{x})}\Bigg(\left(\frac{1}{b_n(p+1)}\widetilde{R}u^{p+1}-\frac{1}{2b_n}Ru^2\right)
\Xi\cdot\overset{\circ}{\nu}+\ci{B}(x,u,\overset{\circ}{\nabla}_bu)\Bigg)d\overset{\circ}{\sigma},
\end{split}
\end{equation*}
where 
\begin{equation*}
\begin{split}
\ci{B}(x,u,\overset{\circ}{\nabla}_bu)
&=n u(x)\overset{\circ}{\nabla}_bu(x)\cdot \overset{\circ}{\nu}_{B_{d(x,0)}}(x)-\frac{1}{2}|\overset{\circ}{\nabla}_bu(x)|^2
\Xi(x)\cdot\overset{\circ}{\nu}_{B_{d(x,0)}}(x)\\
&\hspace{4mm}+\Xi u(x)\overset{\circ}{\nabla}_bu(x)\cdot \overset{\circ}{\nu}_{B_{d(x,0)}}(x)
\end{split}
\end{equation*}
and $B_r$ denotes the ball with respect to Kor\'{a}nyi norm. 
\end{prop}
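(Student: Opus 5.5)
The identity should come from the Heisenberg Pohozaev identity of Garofalo and Lanconelli, transplanted through pseudohermitian normal coordinates. In these coordinates $M$ near $\overline{x}$ is identified with a neighbourhood of $0\in\H^n$. We regard the equation $-b_n\Delta_bu+Ru=\widetilde{R}u^p$ as an equation for the flat operator with a perturbation:
$$-\overset{\circ}{\Delta}_b u = \frac{1}{b_n}\left(\widetilde{R}u^p - Ru\right) + (\Delta_b u-\overset{\circ}{\Delta}_b u).$$
The plan is to multiply this by the Pohozaev multiplier $\Xi u+nu$, where $\Xi$ is the dilation generator \eqref{DefinitionXi}. We then integrate over the Korányi ball $B_r(\overline{x})$ with respect to the flat Haar measure $d\overset{\circ}{V}$, and move all derivatives onto the boundary via flat integration by parts. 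The factor $n$ in $\Xi u+nu$ is $(Q-2)/2$ with $Q=2n+2$, which is exactly what makes the gradient terms cancel.

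First, the left-hand side $-\int(\Xi u+nu)\overset{\circ}{\Delta}_b u$ is treated exactly as in \cite{GaL}. Integrating by parts once gives $\int \overset{\circ}{\nabla}_b(\Xi u+nu)\cdot\overset{\circ}{\nabla}_bu$ minus boundary terms. Next we use the commutator identity $[\overset{\circ}{\nabla}_b,\Xi]=\overset{\circ}{\nabla}_b$, valid because the horizontal fields $Z_\alpha$ are homogeneous of degree $1$, and $\operatorname{div}\Xi=Q$. With these, $\int \overset{\circ}{\nabla}_b(\Xi u)\cdot\overset{\circ}{\nabla}_b u=\int(1-\tfrac{Q}{2})|\overset{\circ}{\nabla}_bu|^2+\tfrac12\int_{\partial}|\overset{\circ}{\nabla}_bu|^2\,\Xi\cdot\overset{\circ}{\nu}$. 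The $|\overset{\circ}{\nabla}_bu|^2$ interior terms then cancel against $n\int|\overset{\circ}{\nabla}_bu|^2$. What remains is precisely the boundary term $\ci{B}$.

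Second come the zero-order terms. We write $u^p\,\Xi u=\Xi(u^{p+1})/(p+1)$ and $u\,\Xi u=\tfrac12\Xi(u^2)$. We then integrate by parts using $\int F\,\Xi(g)=-Q\int Fg-\int \Xi(F)g+\int_{\partial}Fg\,\Xi\cdot\overset{\circ}{\nu}$. This produces the coefficients $\big(\frac{2n+2}{p+1}-n\big)\frac{1}{b_n}\widetilde{R}u^{p+1}$ and $(n-\frac{Q}{2})\frac{1}{b_n}Ru^2=-\frac{1}{b_n}Ru^2$. It also produces the terms $\frac{1}{b_n(p+1)}\Xi(\widetilde{R})u^{p+1}$ and $-\frac{1}{2b_n}\Xi(R)u^2$, together with the boundary flux $\big(\frac{1}{b_n(p+1)}\widetilde{R}u^{p+1}-\frac{1}{2b_n}Ru^2\big)\Xi\cdot\overset{\circ}{\nu}$. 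Here $\widetilde{R}$ is allowed to be a function, which is why $\Xi(\widetilde{R})$ appears. Third, the error term $(\Xi u+nu)(\Delta_bu-\overset{\circ}{\Delta}_bu)$ is simply kept in the interior integral. Collecting everything and tracking signs yields the stated identity.

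The main obstacle is bookkeeping rather than any single hard step. The measure must be the flat one $d\overset{\circ}{V}$ and not the pseudohermitian volume of $\theta$; in normal coordinates these agree only to high order, and that is why all curved effects are concentrated in $\Delta_b-\overset{\circ}{\Delta}_b$. The boundary normal and the flux $\Xi\cdot\overset{\circ}{\nu}$ must also be computed with the Korányi ball. On $\partial B_r$ the vector $\Xi$ is transverse, since the Korányi norm is $\delta_\lambda$-homogeneous, so $\Xi\cdot\overset{\circ}{\nu}>0$ there. I would verify the constant in the commutator identity carefully, since any slip changes the coefficient $n$ in $\ci{B}$.
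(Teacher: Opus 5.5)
Your proposal is correct and takes essentially the paper's route. The paper does not prove the identity itself: it recalls it from \cite[Proposition 3.3]{Afeltra} as the Garofalo--Lanconelli Pohozaev identity on $\H^n$ written in pseudohermitian normal coordinates. That is exactly your computation (multiplier $\Xi u+nu$, flat integration by parts using $[\overset{\circ}{\nabla}_b,\Xi]=\overset{\circ}{\nabla}_b$ and $\operatorname{div}\Xi=Q$), and your coefficients and boundary terms reproduce the stated identity.

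One side remark of yours is slightly off. The identity is exact, so the closeness of the pseudohermitian volume to $d\overset{\circ}{V}$ plays no role. The curved effects are concentrated in $\Delta_b-\overset{\circ}{\Delta}_b$ simply because only the flat operator is integrated by parts.
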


\subsection{Blow-up analysis}

Let $M$ be a $(2n+1)$-dimensional CR manifold equipped with a pseudohermitian structure $\theta$, 
$p_i$ a sequence with $1<p_i\leq b_n-1$ for all $i$ and $p_i\to b_n-1$ as $i\to\infty$, 
and $u_i\in C^2(M)$ a sequence of positive solutions of 
\begin{equation}\label{3.1}
L_J u_i=\widetilde{R} u_i^{p_i},
\end{equation}
where $\widetilde{R}$ is a positive function of class $C^1$. 
 
\begin{defn}
\emph{A point $\overline{x}\in M$ is called a} blow-up point \emph{if there 
exists a sequence $x_i\to\overline{x}$ such that $M_i=u_i(x_i)\to\infty$.}
\end{defn}

\begin{defn}
\emph{A point $\overline{x}\in M$ is called an} isolated blow-up point \emph{if there 
exist $\overline{r}>0$, a constant $C$, and a sequence $x_i\to \overline{x}$ such that 
$x_i$ is a local maximum of $u_i$, $M_i=u_i(x_i)\to\infty$, and 
$$u_i(x)\leq Cd(x,x_i)^{-\frac{2}{p_i-1}}$$
for every $x\in B_{\overline{r}}(x_i)$.}
\end{defn}

Given an isolated blow-up point $\overline{x}$, we define 
$$\overline{u}_i(r)=\int_{\partial B_1(x_i)}u_i\circ\delta_r d\overset{\circ}{\sigma}$$
in pseudohermitian normal coordinates, and $\overline{w}_i(r)=r^{\frac{2}{p_i-1}}\overline{u}_i(r)$.

\begin{defn}\label{defn3.3}
\emph{An isolated blow-up point $\overline{x}$ is called an} isolate simple 
blow-up point \emph{if there exists $\rho\in (0,\overline{r})$ independent of $i$ 
such that $\overline{w}_i$ has exactly one critical point in $(0,\rho)$.}
\end{defn}

We have the following lemma from \cite{Afeltra}.

\begin{lem}[Lemma 4.4 in \cite{Afeltra}]\label{lem3.1}
 If $\overline{x}$ is an isolated blow-up point, then there exists $C$ such that, for $0<r<\overline{r}/3$, there holds 
 $$\max_{B_{2r}(\overline{x})\setminus B_{r/2}(\overline{x})}u_i\leq C\min_{B_{2r}(\overline{x})\setminus B_{r/2}(\overline{x})}u_i.$$
\end{lem}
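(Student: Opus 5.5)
This is the standard Harnack inequality on annuli for isolated blow-up points, proved by rescaling. The plan is to rescale the annulus to unit size and apply a uniform Harnack inequality for the rescaled equation.

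\textbf{Rescaling.} Fix $0<r<\overline{r}/3$ and work in pseudohermitian normal coordinates centered at $x_i$. Since $x_i\to\overline{x}$, the annuli around $\overline{x}$ and around $x_i$ are comparable up to slightly enlarging the radii. Set
$$v_i(y)=r^{\frac{2}{p_i-1}}u_i(\delta_r y)$$
for $y$ in the annulus $\{1/4<|y|<4\}$, which is contained in the image of $B_{\overline{r}}$ because $3r<\overline{r}$. The isolated blow-up bound $u_i(x)\le C d(x,x_i)^{-2/(p_i-1)}$ gives $v_i\le C'$ on this annulus, uniformly in $i$ and $r$.

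\textbf{Rescaled equation.} By the homogeneity of $\delta_r$ and the scaling of the pseudohermitian normal coordinates, $v_i$ satisfies
$$-b_n\Delta_b^{(r)}v_i+r^2R(\delta_r y)\,v_i=\widetilde{R}(\delta_r y)\,v_i^{p_i}.$$
Here $\Delta_b^{(r)}$ is the sublaplacian of the rescaled structure. It is a sum of squares of vector fields converging to the Heisenberg ones as $r\to0$ and uniformly controlled for $r\le\overline{r}$, so it is a uniformly subelliptic operator on the annulus. Rewrite the equation as the linear equation
$$-b_n\Delta_b^{(r)}v_i+c_i(y)\,v_i=0,\qquad c_i=r^2R(\delta_r\cdot)-\widetilde{R}(\delta_r\cdot)\,v_i^{p_i-1}.$$
The potential $c_i$ is uniformly bounded, since $v_i\le C'$, $p_i-1\le 2/n$, and $R$ and $\widetilde{R}$ are bounded.

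\textbf{Harnack inequality.} Apply the Harnack inequality for positive solutions of subelliptic equations with bounded zero-order coefficient, in the form of Jerison--Lee or via Moser iteration for Hörmander operators. Cover the compact annulus $\{1/2\le|y|\le 2\}$, which is connected for the Korányi norm, by finitely many Carnot--Carathéodory balls whose doubles lie in $\{1/4<|y|<4\}$. Chaining the local Harnack estimates gives
$$\max_{1/2\le|y|\le2}v_i\le C\min_{1/2\le|y|\le2}v_i,$$
with $C$ independent of $i$ and $r$. Scaling back, the factor $r^{2/(p_i-1)}$ cancels and the claim follows.

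\textbf{Main obstacle.} The delicate point is the uniformity of the Harnack constant. Two issues must be handled:
\begin{itemize}
\item the rescaled operators $\Delta_b^{(r)}$ vary with $r$ and $i$ (through the choice of coordinates centered at $x_i$);
\item the Korányi annuli used in the statement are compared with the Carnot--Carathéodory balls needed for the Harnack inequality.
\end{itemize}
Both are resolved by the smooth, uniform dependence of pseudohermitian normal coordinates on the base point and by the equivalence of the Korányi norm with the distance $d$ near the origin. Together these give uniform bounds on the structure constants entering the Harnack inequality.
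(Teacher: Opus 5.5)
This is the same rescaling-plus-Harnack argument the paper relies on by citing Lemma 4.4 of \cite{Afeltra}, which is modelled on Li--Zhu, and it is correct: the isolated blow-up bound makes the rescaled potential uniformly bounded, and a uniform subelliptic Harnack inequality chained over the annulus gives the claim. Two small fixes are needed: since only $3r<\overline{r}$ is assumed, rescale on $\{1/4<|y|<3\}$ rather than $\{1/4<|y|<4\}$; and read the annuli as centered at $x_i$ (normal coordinates around $x_i$, as the paper does when it applies the lemma), because your claim that annuli around $\overline{x}$ and around $x_i$ are comparable is not uniform in $r$ --- for $r=d(x_i,\overline{x})\gg M_i^{-(p_i-1)/2}$ the annulus around $\overline{x}$ contains $x_i$, and the $\overline{x}$-centered inequality genuinely fails.
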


Consider the CR Yamabe equation on the Heisenberg group
\begin{equation}\label{3.2}
-b_n\Delta_b u= u^{b_n-1}~~\mbox{ in }\mathbb{H}^n.
\end{equation}
We have the following classification theorem 
proved by Flynn and V\'{e}tois. 

\begin{theorem}[Theorem 1.1 in \cite{FV}]\label{thm3.2}
Let $n\geq 2$ and $u$ be a positive solution to \eqref{3.2} such that
$$u(z,t)\leq C(|z|^2+|t|)^{-\frac{n-2}{2}}~~\mbox{ for all }(z,t)\in\mathbb{H}^n\setminus\{(0,0)\}$$
for some constant $C > 0$. Then $u$ is of the form 
\begin{equation}\label{3.3}
U(z,t)=\frac{c_1}{(t^2+(1+|z|^2)^2)^{\frac{n}{2}}}
\end{equation}
up to the left translation $L_{x_0}(x)=x_0^{-1}x$, for some constant $c_1>0$. 
In particular, when $n=2$, any bounded positive solution to \eqref{3.2} must be 
of the form \eqref{3.3} up to the left translation.
\end{theorem}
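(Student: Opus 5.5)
The statement is a Liouville-type classification on $\H^n$. My plan is to reduce it to the Jerison--Lee integral identity rather than to a finite-energy classification. The hypothesis $u\le C(|z|^2+|t|)^{-\frac{n-2}{2}}$ says $u\lesssim |x|^{-(n-2)}$ in Korányi norm. This is much weaker than the decay $|x|^{-2n}=|x|^{-(Q-2)}$ of the bubble, and for $n=2$ it is only boundedness. So I cannot assume $u\in S^1(\H^n)$ and quote the finite-energy classification of Jerison--Lee directly.

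A naive bootstrap also fails to upgrade the decay. Suppose $u\lesssim |x|^{-a}$. Writing $u$ as the convolution of the fundamental solution ($\sim |x|^{-2n}$) with $u^{\frac{n+2}{n}}$ gives $u\lesssim |x|^{-(a\frac{n+2}{n}-2)}$. This improves $a$ only when $a>n$, and we start at $a=n-2$. Hence the identity must be used directly on the possibly infinite-energy solution.

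\textbf{Step 1 (regularity and gradient bounds).} First I will derive local bounds from the pointwise hypothesis. For $|x_0|=R$ large, rescale $u$ on the Korányi ball $B_{R/2}(x_0)$ by $\delta_R$ and apply interior Folland--Stein/Schauder estimates for \eqref{3.2}. This gives
\[
|\nabla_b^k T^j u|(x)\lesssim |x|^{-(n-2)-k-2j}.
\]
These scale-invariant bounds are all that will be used about the behaviour of $u$ at infinity.

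\textbf{Step 2 (cut-off Jerison--Lee identity).} Jerison--Lee write the equation in terms of $e^{f}=u^{-2/n}$ (or an equivalent variable) and build tensors $D_{\alpha\beta}$, $E_\alpha$ that vanish exactly for the functions in \eqref{3.3}. They obtain a divergence identity $\mathrm{Re}\,Z_{\overline\alpha}(\cdots)=\mathcal{Q}\ge0$, where $\mathcal{Q}$ is a positive combination of $|D|^2$, $|E|^2$ and related terms, weighted by powers of $u$. I will multiply this identity by a cut-off $\eta_R(x)=\eta(|x|/R)$ and integrate. This yields
\[
\int \eta_R\,\mathcal{Q}\;\lesssim\;\int_{B_{2R}\setminus B_R}|\nabla_b\eta_R|\,|\mathcal{V}|,
\]
with $\mathcal{V}$ the vector field inside the divergence.

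\textbf{Step 3 (the main obstacle: killing the boundary term).} I expect this to be the hardest step. Inserting the bounds of Step 1 into $\mathcal{V}$, which is a polynomial in $u$, its derivatives and negative powers of $u$, only gives growth of order $R^{Q-1}\cdot R^{-1}\cdot(\text{decay of }\mathcal{V})$. Because of the negative powers of $u$, this need not tend to $0$ with the crude exponent $n-2$.

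My first attempt will be to exploit the freedom in the Jerison--Lee construction and use a weighted version of the identity. Concretely, I will multiply by $u^{s}$ for a suitable $s$, as Flynn--Vétois do in the Riemannian/CR Obata-type arguments, and choose $s$ so that the weight balances exactly at the threshold exponent $n-2$. I will then use Cauchy--Schwarz to absorb part of the boundary term into $\int\eta_R\mathcal{Q}$, giving $\int\eta_R\mathcal{Q}\lesssim \big(\int_{B_{2R}\setminus B_R}\mathcal{Q}\big)^{1/2}\cdot o(1)$ or similar. Letting $R\to\infty$ then forces $\mathcal{Q}\equiv0$. I expect the positivity $u>0$, together with a lower bound $u\gtrsim|x|^{-2n}$, to control the negative powers. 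That lower bound follows from the maximum principle comparison with the fundamental solution.

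\textbf{Step 4 (conclusion).} Once $\mathcal{Q}\equiv0$, the tensors $D$ and $E$ vanish. The resulting overdetermined system, as in Jerison--Lee, integrates to show that $u^{-2/n}$ is a quadratic polynomial of the form $c\,(t^2+(1+|z|^2)^2)$ after a left translation and dilation. This gives \eqref{3.3}. For $n=2$ the hypothesis is exactly boundedness, which yields the last assertion.
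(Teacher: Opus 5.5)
The paper gives no proof of this statement; it is quoted from Flynn--V\'etois \cite{FV}. Your route, a cut-off Jerison--Lee identity, is the one used there (and, for $n=1$, by Catino--Li--Monticelli--Roncoroni). As written, however, the proposal has two genuine gaps.

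\textbf{Step~1 is false as justified.} The equation is invariant under $u\mapsto\lambda^{n}u\circ\delta_\lambda$, so the scale-invariant decay is $|x|^{-n}$. Your hypothesis $u\lesssim|x|^{-(n-2)}$ lies \emph{below} that rate, and the bounds $|\nabla_b^kT^ju|\lesssim|x|^{-(n-2)-k-2j}$ would follow from rescaling only if $u\lesssim|x|^{-n}$. Concretely, take $|x_0|=R$ and $w(y)=R^{n-2}u(x_0\cdot\delta_R y)$ on $B_{1/2}$. Then $w\lesssim 1$, but $-b_n\Delta_b w=R^{4/n}w^{\frac{n+2}{n}}$, so the interior constants grow with $R$. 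Optimizing over the radius $\rho$ of the ball (the optimum is $\rho\sim R^{(n-2)/n}$, where $\rho^{n}u\sim 1$) gives at best $|\nabla_b u|(x)\lesssim|x|^{-(n-1)+\frac{2}{n}}$. For $n=2$, interior estimates give no decay of $\nabla_b u$ at all from boundedness of $u$. You say these bounds are all you will use at infinity, so everything built on them has no valid input.

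\textbf{Step~3 is only an expectation, and it carries the whole content of the theorem, including the role of the exponent $n-2$.} Multiplying by $u^{s}$ does not preserve the structure of the identity: $\mathrm{Re}\,Z_{\overline{\alpha}}(u^{s}\mathcal{V}^{\alpha})=u^{s}\mathcal{Q}+s\,u^{s-1}\mathrm{Re}\big((Z_{\overline{\alpha}}u)\mathcal{V}^{\alpha}\big)$, and the last term has no sign. You therefore need a modified Jerison--Lee-type identity whose right-hand side is still a sum of squares.

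Even granting a bound $|\mathcal{V}|\lesssim\mathcal{Q}^{1/2}W$, Young's inequality only reduces the problem to showing that $\int_{B_{2R}\setminus B_R}|\nabla_b\eta_R|^2W^2$ stays bounded. Here $W$ involves powers of $u$, some of them negative, times $\nabla_b u$ and $Tu$. Your lower bound $u\gtrsim|x|^{-2n}$ is correct but does not help: it differs from the upper bound by a factor $|x|^{n+2}$, so negative powers of $u$ cannot be controlled pointwise.

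The missing ingredient is a set of weighted \emph{integral} estimates. One tests the equation against $u^{q}\eta_R^{m}$ and integrates by parts until every cut-off error is a negative power of $R$ times $\int_{B_{2R}\setminus B_R}u^{r}$. Such terms are controlled by the pointwise bound on $u$ alone, and choosing exponents that make this bookkeeping close is where the hypothesis $u\lesssim|x|^{-(n-2)}$ enters. Without this, the proposal is a plan rather than a proof.
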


In the following, given an isolated blow-up point $x_i\to\overline{x}$, 
in order to study the blow up sequence of functions we do a rescaling 
by defining $M_i=u_i(x_i)$ and 
$$v_i=\frac{1}{M_i}u_i\circ \delta_{M_i^{-\frac{p_i-1}{2}}}\circ L_{x_i}$$
defined on $B_{\overline{r}M_i^{\frac{p_i-1}{2}}}(\overline{x})$. Note that 
$v_i$ satisfies 
$$L_{\theta_i}v_i=(\widetilde{R}\circ\delta_{M_i^{-\frac{p_i-1}{2}}})v_i^{p_i}$$
where $L_{\theta_i}$ is the conformal CR sub-Laplacian
with respect to the rescaled contact form $\theta_i=M_i^{-p_i+1}\left(\delta_{M_i^{-\frac{p_i-1}{2}}}\circ L_{x_i}\right)^*\theta$
on the rescaled CR structure. 

In the following all covariant derivatives applied to $v_i$ are meant with respect 
to this rescaled pseudohermitian structure. 

\begin{prop}\label{prop3.1}
Suppose that $n=2$. 
If $\overline{x}$ is an isolated blow-up point, then for any $R_i\to\infty$, 
$\epsilon_i\to 0$ and $k\in\mathbb{N}$, up to subsequence, in pseudohermitian normal coordinates
around $\overline{x}$, there holds 
$$\left\|\frac{1}{M_i}u_i\left(\delta_{M_i^{-\frac{p_i-1}{2}}}(x_i^{-1}\cdot x)\right)-(U\circ\delta_{\widetilde{R}(0)^{1/2}})(x)\right\|\leq \epsilon_i,$$
where $U$ is defined as in \eqref{3.3}, $M_i=u_i(x_i)$, and 
$$\frac{R_i}{\log M_i}\to 0.$$
\end{prop}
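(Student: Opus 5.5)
The proof is the standard Schoen/Li--Zhu/Marques blow-up argument, adapted to the Heisenberg setting. The norm in the statement is read as the $\Gamma^{k,\alpha}$ (or $C^k$) norm on the Korányi ball $B_{R_i}(0)$ in the rescaled coordinates.

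\textbf{Step 1: uniform local bounds for $v_i$.} Set
$$v_i=\frac{1}{M_i}\,u_i\circ \delta_{M_i^{-\frac{p_i-1}{2}}}\circ L_{x_i}.$$
Then $v_i(0)=1$ and $0$ is a local maximum of $v_i$. The isolated blow-up bound $u_i(x)\le C d(x,x_i)^{-2/(p_i-1)}$ rescales exactly to
$$v_i(y)\le C|y|^{-\frac{2}{p_i-1}}\quad\text{on } B_{\overline r M_i^{(p_i-1)/2}}.$$
This controls $v_i$ away from the origin, and Lemma \ref{lem3.1} (a Harnack inequality on annuli, after rescaling) turns the bound into two-sided control on annuli.

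To control $v_i$ near the origin, fix a compact $K\subset\H^2$. On $K$, apply the Harnack inequality for the operator $L_{\theta_i}$ to the positive supersolution $v_i$ of $L_{\theta_i}v_i\ge0$. Its coefficients converge to those of $-b_n\overset{\circ}{\Delta}_b$ in every $C^k$ norm, because $\theta_i$ is $\theta$ blown up in pseudohermitian normal coordinates. Combined with $v_i(0)=1$ and the annular bounds, this gives $\sup_K v_i\le C_K$: otherwise $v_i$ would be large on a small sphere, contradicting the upper bound there. The Folland--Stein subelliptic Schauder estimates then give uniform $\Gamma^{k,\alpha}(K)$ bounds for every $k$.

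\textbf{Step 2: passage to the limit and identification.} By Arzelà--Ascoli and a diagonal argument, a subsequence $v_i\to v$ in $C^k_{loc}(\H^2)$. Since $p_i\to b_n-1=2$ and $\widetilde R\circ\delta_{M_i^{-(p_i-1)/2}}\to\widetilde R(0)$, the limit satisfies
$$-b_2\overset{\circ}{\Delta}_b v=\widetilde R(0)\,v^{2},\qquad 0\le v\le 1,\qquad v(0)=1,\qquad \nabla_b v(0)=0,$$
where $v\le1$ follows from the uniform bounds and the local-maximum property; the Harnack inequality gives $v>0$. The function $\widetilde v=\widetilde R(0)v$ solves \eqref{3.2} and is bounded. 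Since $n=2$, Theorem \ref{thm3.2} applies: $\widetilde v$ is a left translate of a bubble of the form \eqref{3.3}.

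Three conditions fix the parameters: the maximum is attained at $0$, the maximum value is $1$, and the equation has coefficient $\widetilde R(0)$. Together they pin down the translation parameter, the dilation, and the constant. After absorbing the normalization into $U$ as in \eqref{3.3}, this gives $v=U\circ\delta_{\widetilde R(0)^{1/2}}$, using that $U$ has a unique maximum at the origin.

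\textbf{Step 3: choice of $R_i$ (the main obstacle).} Step 2 gives convergence in $C^k(B_R)$ for each fixed $R$. A diagonal argument then produces $R_i\to\infty$ and $\epsilon_i\to0$ with
$$\|v_i-U\circ\delta_{\widetilde R(0)^{1/2}}\|_{C^k(B_{R_i})}\le\epsilon_i.$$
Any smaller choice of $R_i$ tending to infinity also works, so we may shrink $R_i$ further to ensure $R_i/\log M_i\to0$.

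The delicate point is making the "up to subsequence" quantifiers consistent, given that the statement fixes $R_i$ and $\epsilon_i$ in advance. I would state it in the standard Li--Zhu form: for any prescribed $\epsilon_i\to0$ and $R_i\to\infty$, after passing to a subsequence of $u_i$, the estimate holds on $B_{R_i}$ with $R_i/\log M_i\to0$. One obtains this by selecting indices $i_j$ with $M_{i_j}$ growing fast enough relative to $R_j$. The choice is possible precisely because the compactness from Steps 1--2 holds uniformly on each fixed ball.
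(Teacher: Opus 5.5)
Your proposal is correct and follows the same route as the paper's sketch. You get uniform local bounds for the rescaled $v_i$ from the annular Harnack inequality, pass to local $\Gamma^{k,\alpha}$ convergence with a diagonal argument, identify the bounded limit through the Flynn--V\'{e}tois classification (Theorem~\ref{thm3.2}, which for $n=2$ needs only boundedness), and finish with the usual Li--Zhu choice of subsequence so that $R_i/\log M_i\to 0$.

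One correction: $v\le 1$ does not follow from $0$ being a local maximum of each $v_i$, since those neighborhoods may shrink. You do not need it, because the full differential $dv(0)$ vanishes in the limit, including $Tv(0)=0$. The $T$-component matters because $\nabla_b$ of a bubble vanishes along its entire center, and $dv(0)=0$ alone forces the bubble to be centered at $0$.
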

\begin{proof}
We just sketch the proof, since it is very similar to that of \cite[Proposition 4.5]{Afeltra}. 
Using the notation above, we note that $v_i$ satisfies 
\begin{equation*}
\left\{
  \begin{array}{ll}
    L_{\theta_i}v_i=(\widetilde{R}\circ\delta_{M-i^{-\frac{p_i-1}{2}}})v_i^{p_i},\\
    v_i(0)=1,\\
  \nabla_{b,\theta_i}v_i(0)=0,\\
 0<v_i(x)<Cd(x,0)^{-\frac{2}{p_i-1}}.
  \end{array}
\right.
\end{equation*}
Following the argument of the proof of \cite[Proposition 4.5]{Afeltra}, 
we find that, for any $k$ and $R>0$, up to subsequences $v_i$ tends to some limit
$v$ in $C^{k,\alpha}(B_R)$. By a diagonal argument, 
we can pass to subsequences and get $v$ defined in $\mathbb{H}^n$ and is bounded in $\mathbb{H}^n$ satisfying
\begin{equation}\label{3.5}
\left\{
  \begin{array}{ll}
    L_{\overset{\circ}{\theta}}v=\widetilde{R}(0)v^{b_n-1},\\
    v(0)=1,\\
  \nabla_{b,\theta_i}v(0)=0,\\
v>0.
  \end{array}
\right.
\end{equation}
Now Proposition \ref{prop3.1}  follows from Theorem \ref{thm3.2}. 
\end{proof}

\begin{lem}\label{lem3.8}
Let $\overline{x}$ be an isolated simple blow-up point, $R_i\to\infty$, 
and suppose that Proposition \ref{prop3.1} holds for some $\epsilon_i\to 0$. 
Then, given a fixed sufficiently small $\delta>0$, 
there exists $\rho_1\in (0,\rho)$ where $\rho$ is the one from Definition 
\ref{defn3.3} such that
\begin{equation*}
\begin{split}
u_i(x)&\leq CM_i^{-\lambda_i}d(x,x_i)^{-2n+\delta},\\
|(u_i)_{,\alpha}(x)|&\leq CM_i^{-\lambda_i}d(x,x_i)^{-2n-1+\delta}~~\mbox{ for }\alpha=1,2,...,n,\\
|(u_i)_{,\alpha\beta}(x)|&\leq CM_i^{-\lambda_i}d(x,x_i)^{-2n-2+\delta}~~\mbox{ for }\alpha,\beta=1,2,...,n,\\
|(u_i)_{,\alpha\overline{\beta}}(x)|&\leq CM_i^{-\lambda_i}d(x,x_i)^{-2n-2+\delta}~~\mbox{ for }\alpha,\beta=1,2,...,n,\\
|(u_i)_{,0}(x)|&\leq CM_i^{-\lambda_i}d(x,x_i)^{-2n-2+\delta},\\
|(u_i)_{,0\alpha}(x)|&\leq CM_i^{-\lambda_i}d(x,x_i)^{-2n-3+\delta}~~\mbox{ for }\alpha=1,2,...,n,\\
|(u_i)_{,00}(x)|&\leq CM_i^{-\lambda_i}d(x,x_i)^{-2n-4+\delta},
\end{split}
\end{equation*} 
for $R_iM_i^{-\frac{p_i-1}{2}}\leq d(x,x_i)\leq \rho_1$, 
where $\lambda_i=(2n-\delta)\frac{p_i-1}{2}-1$. 
\end{lem}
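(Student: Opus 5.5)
This is the CR analogue of the classical decay estimate for isolated simple blow-up points, as in Li--Zhu, Marques and \cite[Lemma 4.7]{Afeltra}. The plan has three stages: a barrier argument on an annulus for the $C^0$ bound, a cancellation of the maximum-principle error terms using the simplicity of the blow-up, and finally rescaling plus interior Schauder estimates for the derivative bounds.

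\textbf{Step 1: the inequality $L_J u_i\le \widetilde{R}u_i^{p_i-1}u_i$ with a small coefficient.} Proposition \ref{prop3.1} gives $u_i(x)\le C M_i\,U(\delta_{M_i^{(p_i-1)/2}}(x_i^{-1}x))$ on $d(x,x_i)\le R_iM_i^{-(p_i-1)/2}$. In particular
\[
u_i\lesssim M_i^{-1}R_i^{-2n}M_i^{(p_i-1)n}\quad\text{on } \partial B_{R_iM_i^{-(p_i-1)/2}}(x_i).
\]
Since $\overline{x}$ is isolated simple, $\overline{w}_i$ is decreasing on $(R_iM_i^{-(p_i-1)/2},\rho)$. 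Together with the Harnack inequality of Lemma \ref{lem3.1}, this gives
\[
u_i(x)^{p_i-1}\le C\,R_i^{-2+o(1)}\,d(x,x_i)^{-2}
\]
on the annulus $R_iM_i^{-(p_i-1)/2}\le d(x,x_i)\le\rho$. Hence $u_i$ satisfies the linear inequality $-b_n\Delta_b u_i+(R-\widetilde{R}u_i^{p_i-1})u_i=0$, whose potential is at most $o(1)\,d(x,x_i)^{-2}$.

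\textbf{Step 2: comparison functions.} Work in pseudohermitian normal coordinates at $x_i$, with $d$ comparable to the Korányi norm. Define
\[
\phi_i(x)=A M_i^{-\lambda_i}d(x,x_i)^{-2n+\delta}+B\,\Big(\max_{\partial B_\rho(x_i)}u_i\Big)\,d(x,x_i)^{-\delta}.
\]
On $\H^n$ one has $\Delta_b(|x|^{-a})=-a(2n-a)\,|\nabla_b|x||^2\,|x|^{-a-2}$, which is negative for $0<a<2n$. The degenerate factor $|\nabla_b|x||^2$ vanishes on the $t$-axis. I expect this to be the main obstacle: there the potential term is not dominated. I would fix it either by replacing $d^{-a}$ with powers of the fundamental solution of the pseudohermitian sublaplacian (the Green function, which behaves like $d^{-2n}$ and gives $L_J G=0$ away from the pole), or by adding a correction term. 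Concretely, I would use $G_{x_i}^{(2n-\delta)/2n}$ and $G_{x_i}^{\delta/2n}$. For $\Phi=G^{s}$ with $0<s<1$ we get
\[
\Delta_b\Phi=s(s-1)G^{s-2}|\nabla_bG|^2,
\]
which has the same degeneracy. So I would instead handle it through the Harnack inequality and averaging: first prove the estimate for the spherical averages $\overline{u}_i$ using the ODE-type comparison, and then transfer it to $u_i$ via Lemma \ref{lem3.1}. The metric error terms from normal coordinates are $O(d^{1-2})$ perturbations and are absorbed for $\rho_1$ small. The boundary comparison holds at the inner sphere by choosing $A$ large (from Step 1) and at the outer sphere by choosing $B$. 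The maximum principle then gives $u_i\le\phi_i$.

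\textbf{Step 3: removing the $B$ term.} The standard argument uses the isolated simple property again. Since $\overline{w}_i$ is decreasing, for $\rho_1\le r\le\rho$ we have
\[
\rho^{2/(p_i-1)}\max_{\partial B_\rho}u_i\le C\,r^{2/(p_i-1)}\,\overline{u}_i(r)\le C r^{2/(p_i-1)}\big(AM_i^{-\lambda_i}r^{-2n+\delta}+B\max_{\partial B_\rho}u_i\,r^{-\delta}\big).
\]
Taking $r$ small makes the coefficient $C r^{2/(p_i-1)-\delta}$ of the $B$ term less than $1/2$, because $2/(p_i-1)\approx n>\delta$. Then $\max_{\partial B_\rho}u_i\lesssim M_i^{-\lambda_i}$, and the $B$ term is bounded by $M_i^{-\lambda_i}d^{-\delta}\le M_i^{-\lambda_i}d^{-2n+\delta}$. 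This proves the $C^0$ estimate.

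\textbf{Step 4: derivative bounds.} Fix $x$ with $d(x,x_i)=r$ and rescale $w(y)=u_i(x\cdot\delta_{r}y)$ on a Korányi ball of radius $1/2$. The rescaled equation has a uniformly bounded potential. Interior Folland--Stein and Schauder estimates for $L_J$ in the rescaled structure give
\[
|\nabla_b^k T^j w|\lesssim \sup w\lesssim M_i^{-\lambda_i}r^{-2n+\delta},
\]
and the Harnack inequality (Lemma \ref{lem3.1}) lets us take the supremum over the annulus. Undoing the dilation, each horizontal derivative contributes $r^{-1}$ and each $T$-derivative contributes $r^{-2}$. This yields the stated bounds for $u_{,\alpha}$, $u_{,\alpha\beta}$, $u_{,\alpha\bar\beta}$, $u_{,0}$, $u_{,0\alpha}$ and $u_{,00}$; the Tanaka--Webster covariant derivatives differ from the coordinate ones by lower-order terms with bounded coefficients.
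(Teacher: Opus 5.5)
Your architecture is exactly the Li--Zhu argument the paper cites: smallness of the potential $\widetilde R u_i^{p_i-1}\le o(1)\,d(x,x_i)^{-2}$ from simplicity and Harnack, comparison on the annulus, removal of the outer term by simplicity, and derivative bounds by rescaled interior estimates. However, Step~2, which carries the whole $C^0$ bound, is not established.

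As you compute, with the Kor\'anyi norm $N$ and $\overset{\circ}{\Delta}_b=\sum_\alpha(Z_\alpha Z_{\overline\alpha}+Z_{\overline\alpha}Z_\alpha)$, one has $-\overset{\circ}{\Delta}_b N^{-a}=\tfrac{a(2n-a)}{2}\,|z|^2N^{-a-4}$, which vanishes on the $t$-axis. There, neither term of $\phi_i$ is a supersolution of $-b_n\Delta_b+R-\widetilde R u_i^{p_i-1}$: the strictly negative term $-\widetilde R u_i^{p_i-1}\phi_i$ and the indefinite $O(N^{-a})$ normal-coordinate error $(\Delta_b-\overset{\circ}{\Delta}_b)\phi_i$ are not dominated.

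Neither of your alternatives repairs this. Powers of $G$ are again functions of $N$ and degenerate in the same way. The averaging route fails as stated: for the paper's flat average $\overline u_i$ there is no ODE, because $r\,\overline u_i'(r)$ is the average of $\Xi u_i$, which contains $2t\,Tu_i$ and is not controlled by integrating $\Delta_b u_i$. Yet Step~2 ends with ``the maximum principle then gives $u_i\le\phi_i$'', which is exactly the comparison you showed to be unavailable.

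The simplest repair is a non-radial correction. Take $\phi_a=N^{-a}-\eta|z|^2N^{-a-2}$ and set $s=|z|^2/N^2\in[0,1]$. Since $\overset{\circ}{\Delta}_b|z|^2=2n$, the correction contributes $2n\eta N^{-a-2}$ on the axis, and one finds
\[
-\overset{\circ}{\Delta}_b\phi_a=N^{-a-2}\Big(\tfrac{a(2n-a)}{2}\,s+2n\eta-\eta\,\tfrac{(a+2)(2n+2-a)}{2}\,s^2\Big)\ \ge\ 2n\eta\,N^{-a-2}
\]
whenever $0<\eta\le\frac{a(2n-a)}{(a+2)(2n+2-a)}$. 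Moreover $(1-\eta)N^{-a}\le\phi_a\le N^{-a}$. With $a=2n-\delta$ and $a=\delta$, this uniform lower bound absorbs $o(1)N^{-2}\phi_a$, $R\phi_a$, and the error $|(\Delta_b-\overset{\circ}{\Delta}_b)\phi_a|\lesssim N^{-a}$ from Lemma~\ref{SublaplacianoCoordinate}, once $\rho_1$ is small. The rest of the Li--Zhu argument then goes through.

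Alternatively, use the weighted mean of the sub-Laplacian mean value formula, $m(r)=r^{1-Q}\int_{\partial B_r}u\,|\overset{\circ}{\nabla}_bN|^2|\nabla N|^{-1}d\sigma$ (Euclidean $\nabla N$ and $d\sigma$). It satisfies the exact identity $r^{Q-1}m'(r)=c\int_{B_r}\overset{\circ}{\Delta}_b u$, and combined with Lemma~\ref{lem3.1} it gives an honest ODE comparison.

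The paper's one-line reference does not discuss this degeneracy, so your diagnosis is a real point, but the proof needs one of these fixes.

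Two smaller slips:
\begin{itemize}
\item On $\partial B_{r_i}(x_i)$ the bubble gives $u_i\lesssim M_iR_i^{-2n}$. Your expression carries an extra factor $M_i^{-n\tau_i}$, and $M_i^{\tau_i}\to1$ is only proved in Lemma~\ref{lem3.11}, which relies on the present lemma. This is harmless, since the barrier only needs the weaker bound.
\item In Step~3 the auxiliary radius must lie in $(r_i,\rho_1)$, small relative to the outer radius $\rho_1$ of the annulus where the comparison holds, with $\max_{\partial B_{\rho_1}}u_i$ in the second term. It should not lie in $[\rho_1,\rho]$.
\end{itemize}
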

\begin{proof}
 The proof is analogous to the proof of \cite[Lemma 3.3]{Li&Zhu}. 
\end{proof}

\begin{lem}\label{lem3.9}
In the hypotheses of Lemma \ref{lem3.8}
for $|x|\leq \rho_1 M_i^{\frac{p_i-1}{2}}$, 
the following estimates hold: 
\begin{equation*}
\begin{split}
v_i(x)&\leq CM_i^{\frac{p_i-1}{2}\delta}d(x,x_i)^{-2n},\\
|(v_i)_{,\alpha}(x)|&\leq CM_i^{\frac{p_i-1}{2}\delta}d(x,x_i)^{-2n-1}~~\mbox{ for }\alpha=1,2,...,n,\\
|(v_i)_{,\alpha\beta}(x)|&\leq CM_i^{\frac{p_i-1}{2}\delta}d(x,x_i)^{-2n-2}~~\mbox{ for }\alpha,\beta=1,2,...,n,\\
|(v_i)_{,\alpha\overline{\beta}}(x)|&\leq CM_i^{\frac{p_i-1}{2}\delta}d(x,x_i)^{-2n-2}~~\mbox{ for }\alpha,\beta=1,2,...,n,\\
|(v_i)_{,0}(x)|&\leq CM_i^{\frac{p_1-1}{2}\delta}d(x,x_i)^{-2n-2},\\
|(v_i)_{,0\alpha}(x)|&\leq CM_i^{\frac{p_i-1}{2}\delta}d(x,x_i)^{-2n-3}~~\mbox{ for }\alpha=1,2,...,n,\\
|(v_i)_{,00}(x)|&\leq CM_i^{\frac{p_i-1}{2}\delta}d(x,x_i)^{-2n-4},
\end{split}
\end{equation*}  
\end{lem}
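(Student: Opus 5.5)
Lemma~\ref{lem3.9} should follow from Lemma~\ref{lem3.8} by rescaling, together with the uniform control of $v_i$ in the core region given by Proposition~\ref{prop3.1}. The statement is really about $v_i$ as a function of the rescaled variable. Here $d(x,x_i)$ should be read as the distance to the origin in the rescaled coordinates, that is $|x|$, which is the image of $x_i$. I will split the region $|x|\le \rho_1 M_i^{\frac{p_i-1}{2}}$ into the core $|x|\le R_i$ and the annulus $R_i\le |x|\le \rho_1 M_i^{\frac{p_i-1}{2}}$.

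\emph{Annulus.} Write $\mu_i=M_i^{-\frac{p_i-1}{2}}$, so that $v_i(x)=M_i^{-1}u_i(x_i\cdot\delta_{\mu_i}x)$. The corresponding point $y=x_i\cdot\delta_{\mu_i}x$ satisfies $d(y,x_i)=\mu_i|x|$, and the condition $R_i\le|x|\le\rho_1\mu_i^{-1}$ is exactly the range $R_iM_i^{-\frac{p_i-1}{2}}\le d(y,x_i)\le\rho_1$ of Lemma~\ref{lem3.8}. Apply Lemma~\ref{lem3.8} to get
\[
v_i(x)\le C M_i^{-1-\lambda_i}\mu_i^{-2n+\delta}|x|^{-2n+\delta}.
\]
Since $\lambda_i=(2n-\delta)\frac{p_i-1}{2}-1$, we have $M_i^{-1-\lambda_i}\mu_i^{-2n+\delta}=1$. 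Hence $v_i(x)\le C|x|^{-2n+\delta}=C|x|^{\delta}|x|^{-2n}$. Using $|x|\le\rho_1 M_i^{\frac{p_i-1}{2}}$, this gives $|x|^\delta\le C M_i^{\frac{p_i-1}{2}\delta}$, which is the claimed bound.

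For the derivatives I will use that each horizontal derivative $(\cdot)_{,\alpha}$ scales by $\mu_i$ and each $T$-derivative $(\cdot)_{,0}$ scales by $\mu_i^2$ under $\delta_{\mu_i}$. The covariant derivatives are taken with respect to $\theta_i$, which is the rescaled structure, so they are exactly the pullbacks of those of $\theta$ with these weights. Adding the corresponding number of factors $\mu_i$ (one for $\alpha$, two for $0$) gives the same exponent bookkeeping. For instance,
\[
|(v_i)_{,0\alpha}(x)|\le C\,\mu_i^{3}M_i^{-1-\lambda_i}\mu_i^{-2n-3+\delta}|x|^{-2n-3+\delta}=C|x|^{-2n-3+\delta},
\]
and the same conversion $|x|^\delta\le CM_i^{\frac{p_i-1}{2}\delta}$ finishes each case. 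The one point needing care is that the Tanaka--Webster covariant derivatives, rather than the coordinate ones, transform homogeneously. This holds because $\theta_i$ is a constant multiple of the pulled-back form, so the connection is unchanged and only the frame normalization rescales.

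\emph{Core.} For $|x|\le R_i$, Proposition~\ref{prop3.1} gives $v_i$ within $\epsilon_i$ of the bubble $U\circ\delta_{\widetilde R(0)^{1/2}}$, taken in a $C^k$ norm on $B_{R_i}$ with $k\ge4$ (the proposition allows any $k$). The bubble and its derivatives up to weighted order $4$ decay like $(1+|x|)^{-2n-j}$, where $j$ is the weighted order. So $v_i$ and its derivatives are bounded by $C(1+|x|)^{-2n-j}+\epsilon_i$. I expect this step to be the main obstacle. The additive $\epsilon_i$ error is not automatically bounded by $C|x|^{-2n-j}$ when $|x|$ is close to $R_i$. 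To handle it I will choose $\epsilon_i\le R_i^{-2n-4}$, which is allowed since Proposition~\ref{prop3.1} holds for arbitrary $\epsilon_i\to0$ after passing to a subsequence. Then $\epsilon_i\le|x|^{-2n-j}$ on $B_{R_i}$. Near the origin the bound $d(x,0)^{-2n-j}$ is large, so boundedness of $v_i$ and its derivatives there suffices.

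Combining the two regions, and using $M_i^{\frac{p_i-1}{2}\delta}\ge1$, gives all the stated estimates.
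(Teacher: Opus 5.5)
Your proposal is correct and follows the paper's argument. The paper's proof only says that the lemma follows from Proposition~\ref{prop3.1} (core region $|x|\le R_i$) and from Lemma~\ref{lem3.8} after rescaling (annulus); you have made explicit the bookkeeping $M_i^{-1-\lambda_i}\mu_i^{-2n+\delta}=1$, the bound $|x|^{\delta}\le CM_i^{\frac{p_i-1}{2}\delta}$, and the weights $\mu_i$, $\mu_i^2$ for horizontal and $T$-derivatives.

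Your extra choice $\epsilon_i\le R_i^{-2n-4}$ is legitimate but not needed. Since $R_i/\log M_i\to 0$, one has $M_i^{\frac{p_i-1}{2}\delta}R_i^{-2n-4}\to\infty$, so boundedness of $v_i$ and its derivatives on $B_{R_i}$ already gives the stated bounds there.
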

\begin{proof}
This follows from Proposition \ref{prop3.1} and Lemma \ref{lem3.8}. 
\end{proof}

\begin{lem}\label{lem3.10}
If $\overline{x}$ is an isolated blow-up point, then, with the notation of Lemma \ref{lem3.8}, 
in pseudohermitian normal coordinates around $x_i$
$$\left|\int_{B_{\rho_1}(x_i)}(u_i+n\Xi u_i)(\overset{\circ}{\Delta}_bu_i-\Delta_b u_i)\right|\leq
\left\{
  \begin{array}{ll}
    CM_i^{-1+ \delta+o(1)}, & \hbox{if $n=2$;} \\
    CM_i^{-\frac{11}{2}+\frac{2}{3}\delta+o(1)}, & \hbox{if $n=3$;} \\
    CM_i^{1+n-n^2+\frac{2}{n}\delta+o(1)}, & \hbox{if $n\geq 4$.}
  \end{array}
\right.$$
\end{lem}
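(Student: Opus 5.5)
The plan is to expand the difference operator $\Delta_b-\overset{\circ}{\Delta}_b$ in pseudohermitian normal coordinates centred at $x_i$. We then estimate the resulting integral by splitting the ball into an inner region, where the bubble description of Proposition \ref{prop3.1} holds, and an outer annulus, where the decay estimates of Lemma \ref{lem3.8} hold.

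\textbf{Step 1: the operator difference.} By the Jerison--Lee normal coordinate expansions \cite{Jerison&Lee2}, the coframe, connection forms and metric coefficients agree with those of $\H^n$ up to errors of weighted order $O(d(x,0)^{k})$. The vanishing orders are dictated by the homogeneity of the coordinates. Consequently $\Delta_b-\overset{\circ}{\Delta}_b$ is a second order operator of the form
\[
a^{\alpha\beta}\partial_{\alpha}\partial_{\beta}+a^{\alpha\bar\beta}\partial_\alpha\partial_{\bar\beta}+a^{\alpha 0}\partial_\alpha T+a^{00}T^2+b^\alpha\partial_\alpha+b^0T .
\]
Its coefficients vanish to orders determined by the curvature. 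The horizontal second order coefficients are $O(|x|^2)$, the mixed ones carrying $T$ are $O(|x|^3)$ and $O(|x|^4)$ respectively, and the first order coefficients are $O(|x|)$ and $O(|x|^2)$. Each term therefore scales like $d^{2}\cdot d^{-2}$ relative to $\overset{\circ}{\Delta}_b u$, gaining two weighted powers of $d$. Here I would rely on the appendix-type computation that is analogous to \cite{Afeltra} for $n=1$.

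\textbf{Step 2: inner region $d(x,x_i)\le R_iM_i^{-\frac{p_i-1}{2}}$.} I would rescale to $v_i$ and use Lemma \ref{lem3.9}. The integrand becomes $M_i^{\cdot}$ times the rescaled quantity $(v_i+n\Xi v_i)(\text{difference operator on } v_i)$, where the coefficients now carry a factor $M_i^{-(p_i-1)}|y|^2$. Bounding $|v_i|$ and its derivatives by $C|y|^{-2n}$ and the respective negative powers, the integrand is $\lesssim M_i^{\cdots}|y|^{2-4n-2}$, integrated over $|y|\le R_i$ against $dy$ with $Q=2n+2$. This produces a factor $\int^{R_i}r^{-4n+2n+1+2}\,dr$. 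The integral is bounded for $n\ge3$, is logarithmic in the borderline case, and for $n=2$ contributes at most a factor $R_i^{\cdot}$. Since $R_i\ll\log M_i$, this factor is absorbed into $M_i^{o(1)}$.

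\textbf{Step 3: outer region $R_iM_i^{-\frac{p_i-1}{2}}\le d\le\rho_1$.} Here I would use Lemma \ref{lem3.8}, which gives $|u_i|,|\Xi u_i|\lesssim M_i^{-\lambda_i}d^{-2n+\delta}$ and second derivatives $\lesssim M_i^{-\lambda_i}d^{-2n-2+\delta}$. The integrand is then $\lesssim M_i^{-2\lambda_i}d^{-4n+2\delta}$, and integrating in $d^{2n+1}\,dd$ from the inner radius gives the bound
\[
M_i^{-2\lambda_i}\Bigl(1+(R_iM_i^{-\frac{p_i-1}{2}})^{-2n+2+2\delta}\Bigr).
\]
Using $p_i-1=\frac2n+o(1)$, we have $\lambda_i=(2n-\delta)/n-1+o(1)=1-\frac{\delta}{n}+o(1)$.

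\textbf{Step 4: collecting exponents.} For $n=2$ the outer term gives $M_i^{-2+\delta}\cdot M_i^{\frac12(2-2\delta)\cdots}$. The case distinction in the statement ($n=2$, $n=3$, $n\ge4$) then arises from which region and which term of the operator expansion dominates. To get the sharper exponents for $n\ge3$, I would use the finer vanishing of the coefficients: the trace-type terms vanish to higher order because the Ricci and torsion parts are absorbed in normal coordinates. I would also exploit the fact that the bubble $U$ is radial in the Korányi sense, which kills the leading-order odd terms upon integration over spheres.

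\textbf{Main obstacle.} I expect the main obstacle to be Step 1 together with this cancellation. Pinning down exactly which coefficients vanish to which weighted order, and extracting the precise powers $-\frac{11}{2}$ and $1+n-n^2$, requires careful bookkeeping of the normal coordinate expansion. The first thing I would try is to write the expansion through order $4$, plug in the bubble, and integrate by parity.
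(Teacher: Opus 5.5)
For $n=2$ your route is essentially the paper's, and it works once you finish the bookkeeping.

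\textbf{How the paper proceeds.} It does not split the ball. It bounds $|\overset{\circ}{\Delta}_bu_i-\Delta_bu_i|$ pointwise through the appendix expansion (Lemma \ref{SublaplacianoCoordinate}, with Lemma \ref{StimeDerivateCoordinateCR} to pass between coordinate and covariant derivatives). It bounds $v_i+n\Xi v_i$ and the derivatives of $v_i$ by Lemma \ref{lem3.9}, which already packages Proposition \ref{prop3.1} and Lemma \ref{lem3.8} on the whole rescaled ball. It then integrates once over $B_{\rho_1M_i^{(p_i-1)/2}}(0)$, getting $M_i^{n+3-(n+1)p_i}M_i^{(p_i-1)\delta}$ times at most $\log M_i$.

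\textbf{Your split.} The inner/outer decomposition is the same computation in two pieces, and it even avoids the $M_i^{(p_i-1)\delta}$ loss. You still need to finish Step 4:
\begin{itemize}
\item the inner piece is $M_i^{2-3(p_i-1)}\,O(1)$;
\item the outer piece is $M_i^{-2\lambda_i}\bigl(R_iM_i^{-\frac{p_i-1}{2}}\bigr)^{-2+2\delta}+M_i^{-2\lambda_i}=M_i^{2-3(p_i-1)}R_i^{-2+2\delta}+M_i^{-2+\delta+o(1)}$, since the $\delta$'s cancel exactly.
\end{itemize}
The total is therefore $\lesssim M_i^{-1+3\tau_i}=M_i^{-1+o(1)}$, which gives the claim. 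Two minor points. With your uniform $d^2$ gain, the radial integrand in Step 2 is $r^{-2n+1}$, not $r^{-2n+3}$, because you counted the $|y|^2$ twice; so the inner integral is $O(1)$ for every $n\ge 2$. Also, the $T^2$ coefficient is actually $O(|x|^6)$, though your $O(|x|^4)$ suffices.

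\textbf{The gap for $n\ge 3$.} Step 4 only announces cancellations (extra vanishing of trace terms, parity of $U$) and never exhibits them. With the estimates at hand they cannot produce $-\frac{11}{2}$ or $1+n-n^2$. The same arithmetic for general $n$ gives:
\begin{itemize}
\item an inner majorant $M_i^{2-(n+1)(p_i-1)}$ times an integral bounded below by a positive constant;
\item an outer bound $M_i^{-2\lambda_i}\bigl(R_iM_i^{-\frac{p_i-1}{2}}\bigr)^{-2n+2+2\delta}=M_i^{2-(n+1)(p_i-1)}R_i^{-2n+2+2\delta}$.
\end{itemize}
Both are $M_i^{-\frac{2}{n}+o(1)}$. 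In the annulus $u_i$ is controlled only by Lemma \ref{lem3.8}, not by a bubble, so no symmetry of $U$ is available to improve that region.

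The paper's proof does not close this gap either. It integrates the same pointwise bounds, and for $n\ge 3$ it assigns the factor $M_i^{-(p_i-1)(n-1)}$ to $\int|x|(1+|x|)^{-4n-1}$ over the growing ball. That is unjustified, since this integral is bounded below by a positive constant. Even taken at face value, its last display gives $M_i^{-\frac{4}{3}+\frac{2}{3}\delta+o(1)}$ for $n=3$, not $M_i^{-\frac{11}{2}+\frac{2}{3}\delta+o(1)}$. Only the case $n=2$ is used afterwards (Lemma \ref{lem3.11}), and that case your argument does establish.
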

\begin{proof}
Thanks to  \eqref{DefinitionXi}, \eqref{RelazioniInverseCoordinatePH}, Lemma \ref{lem3.9}, 
and the fact that $v_i$ is real, we have 
\begin{equation*}
\begin{split}
&|v_i+n\Xi v_i|\lesssim v_i+|x||\overset{\circ}{Z}_\alpha v_i|+|x|^2|\overset{\circ}{T} v_i|
\lesssim v_i+|x||(v_i)_{,\alpha}|+|x|^2|(v_i)_{,0}|\\
&\lesssim M_i^{\frac{p_i-1}{2}\delta}(1+|x|)^{-2n}
+|x|M_i^{\frac{p_i-1}{2}\delta}(1+|x|)^{-2n-1}+|x|^2M_i^{\frac{p_i-1}{2}\delta}(1+|x|)^{-2n-2}\\
&\lesssim M_i^{\frac{p_i-1}{2}\delta}(1+|x|)^{-2n}. 
\end{split}
\end{equation*}
Furthermore, using Lemmas \ref{SublaplacianoCoordinate} and \ref{lem3.8}, we have 
\begin{align*}
&|\overset{\circ}{\Delta}_bu_i-\Delta_b u_i|\\
&\lesssim |x||(u_i)_{,\alpha}|+|x|^2|(u_i)_{,0}|+|x|^2|(u_i)_{,\alpha\beta}|
+|x|^2|(u_i)_{,\alpha\overline{\beta}}|+|x|^3|(u_i)_{,0\beta}|+|x|^6|(u_i)_{,00}|\\
&\lesssim M_i\Bigg(M_i^{-\frac{p_i-1}{2}}|x|M_i^{\frac{p_i-1}{2}}|(v_i)_{,\alpha}|
+M_i^{-(p_i-1)}|x|^2M_i^{p_i-1}|(v_i)_{,0}|\\
&\hspace{12mm}+M_i^{-(p_i-1)}|x|^2M_i^{p_i-1}|(v_i)_{,\alpha\beta}|
 +M_i^{-(p_i-1)}|x|^2M_i^{p_i-1}|(v_i)_{,\alpha\overline{\beta}}|\\
&\hspace{12mm}
+M_i^{-3\frac{p_i-1}{2}}|x|^3M_i^{3\frac{p_i-1}{2}}|(v_i)_{,0\alpha}|
+M_i^{-3(p_i-1)}|x|^6M_i^{2(p_i-1)}|(v_i)_{,00}|\Bigg)\circ\delta_{M_i^{\frac{p_i-1}{2}}}\\
&\lesssim M_iM_i^{\frac{p_i-1}{2}\delta}\Big(|x|(1+|x|)^{-2n-1}+
|x|^2(1+|x|)^{-2n-2}+|x|^3(1+|x|)^{-2n-3}\\
&\hspace{12mm}+M_i^{-(p_i-1)}|x|^6(1+|x|)^{-2n-4}\Big)\circ\delta_{M_i^{\frac{p_i-1}{2}}}\\
&\lesssim M_iM_i^{\frac{p_i-1}{2}\delta}\Big(|x|(1+|x|)^{-2n-1}+
M_i^{-(p_i-1)}|x|^6(1+|x|)^{-2n-4}
\Big)\circ\delta_{M_i^{\frac{p_i-1}{2}}}.
\end{align*}
Combining these yields 
\begin{align*}
&\left|\int_{B_{\rho_1}(x_i)}(u_i+n\Xi u_i)(\overset{\circ}{\Delta}_bu_i-\Delta_b u_i)\right|\\
&=M_i^2 M_i^{-\frac{p_i-1}{2}(2n+2)}
\left|\int_{B_{\rho_1M_i^{\frac{p_i-1}{2}}}(0)}(v_i+n\Xi v_i)(\overset{\circ}{\Delta}_bu_i-\Delta_b u_i)\circ\delta_{M_i^{-\frac{p_i-1}{2}}}\right|\\
&\lesssim M_i^2 M_i^{-\frac{p_i-1}{2}(2n+2)}
M_i^{(p_i-1)\delta}
\int_{B_{\rho_1M_i^{\frac{p_i-1}{2}}}(0)}
 (1+|x|)^{-2n}\Bigg(|x|(1+|x|)^{-2n-1}\\
&\hspace{7cm}+
M_i^{-(p_i-1)}|x|^6(1+|x|)^{-2n-4}
\Bigg)\\
&\lesssim \left\{
            \begin{array}{ll}
              M_i^{n+3-(n+1)p_i} M_i^{(p_i-1)\delta}\Big(\log M_i+M_i^{-(p_i-1)(n-2)}\Big), & \hbox{if $n=2$;} \\
              M_i^{n+3-(n+1)p_i} M_i^{(p_i-1)\delta}\Big(M_i^{-(p_i-1)(n-1)}+M_i^{-(p_i-1)}\log M_i\Big), & \hbox{if $n=3$;} \\
              M_i^{n+3-(n+1)p_i} M_i^{(p_i-1)\delta}\Big(M_i^{-(p_i-1)(n-1)}+M_i^{-(p_i-1)(n-2)}\Big), & \hbox{if $n\geq 4$.}
            \end{array}
          \right.
\end{align*}
Note that the last terms are in order of 
$$\left\{
  \begin{array}{ll}
    M_i^{-1+ \delta+o(1)}, & \hbox{if $n=2$;} \\
    M_i^{-\frac{11}{2}+\frac{2}{3}\delta+o(1)}, & \hbox{if $n=3$;} \\
    M_i^{1+n-n^2+\frac{2}{n}\delta+o(1)}, & \hbox{if $n\geq 4$,}
  \end{array}
\right.$$
This proves the assertion. 
\end{proof}

\begin{lem}\label{lem3.11}
In the assumptions of Lemma \ref{lem3.8}, if $\tau_i:=b_n-1-p_i$, 
then 
$$\tau_i= 
    O(u_i(x_i)^{-1+ \delta+o(1)})~~\mbox{ whenever }n=2,$$
and in particular, $u_i(x_i)^{\tau_i}\to 1$. 
\end{lem}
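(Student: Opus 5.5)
We apply the Pohozaev identity of Proposition \ref{Pohozaev} to $u_i$ with $\overline{x}=x_i$ (pseudohermitian normal coordinates centered at $x_i$), $p=p_i$ and $r=\rho_1$. The strategy is to isolate the term carrying $\tau_i$ and to show that every other term in the identity is of order $M_i^{-1+\delta+o(1)}$. Since $n=2$, we have $Q=2n+2=6$, $b_n-1=2$, and
$$\frac{2n+2}{p_i+1}-n=\frac{6}{3-\tau_i}-2=\frac{2\tau_i}{3-\tau_i}.$$
The first interior term is therefore $\frac{2\tau_i}{b_n(3-\tau_i)}\int_{B_{\rho_1}}\widetilde{R}u_i^{p_i+1}$. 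We move every other term to the right-hand side and then estimate them one by one.

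The key lower bound is $\int_{B_{\rho_1}(x_i)}\widetilde{R}u_i^{p_i+1}\geq c>0$. To get it, rescale to $v_i$. After the change of variables the integral equals $M_i^{p_i+1-\frac{p_i-1}{2}\cdot 6}\int v_i^{p_i+1}\circ(\cdots)$, and the exponent is $p_i+1-3(p_i-1)=\frac{3}{2}\tau_i\cdot\frac{2}{3}\cdot\ldots=2\tau_i\ge 0$. By Proposition \ref{prop3.1}, $v_i\to U\circ\delta_{\widetilde R(0)^{1/2}}$ on $B_{R_i}$, so the integral is bounded below by a positive constant because $\widetilde R>0$. Hence
$$\tau_i\lesssim \text{(sum of all remaining terms)}.$$

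Next we bound the remaining terms.

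\emph{Terms with $R u_i^2$ and $\Xi(R)u_i^2$.} By Lemma \ref{lem3.8} together with the bubble profile, $\int_{B_{\rho_1}}u_i^2$ rescales to $M_i^{2-3(p_i-1)}\int_{B_{\rho_1M_i^{(p_i-1)/2}}}v_i^2$. Using Lemma \ref{lem3.9}, $v_i^2\lesssim M_i^{(p_i-1)\delta}(1+|x|)^{-8}$, and this is integrable on $\H^2$ (homogeneous dimension $6$). The term is therefore $O(M_i^{-1+\delta+o(1)})$.

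\emph{Term with $\Xi(\widetilde R)u_i^{p_i+1}$.} Since $|\Xi(\widetilde R)|\lesssim |x|$ for $\widetilde R\in C^1$ (after subtracting a possible linear part), this term is bounded by $\int |x|u_i^{p_i+1}$. Rescaling gives $M_i^{-(p_i-1)/2}\int|x|v_i^{p_i+1}\lesssim M_i^{-1+o(1)}$, because $|x|(1+|x|)^{-12}$ is integrable.

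\emph{Term $(\Xi u_i+nu_i)(\Delta_b-\overset{\circ}{\Delta}_b)u_i$.} This is exactly Lemma \ref{lem3.10} in the case $n=2$, which gives $O(M_i^{-1+\delta+o(1)})$.

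\emph{Boundary terms on $\partial B_{\rho_1}$.} By Lemma \ref{lem3.8}, on $d(x,x_i)=\rho_1$ we have $u_i$, $\nabla u_i$ and $\Xi u_i$ all $\lesssim M_i^{-\lambda_i}$ with $\lambda_i=(4-\delta)\frac{p_i-1}{2}-1=1-\delta/2+o(1)$. Each boundary term is quadratic (or of power $p_i+1$) in these quantities, so it is $O(M_i^{-2\lambda_i})=O(M_i^{-2+\delta+o(1)})$.

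Collecting these estimates gives $\tau_i=O(M_i^{-1+\delta+o(1)})$. Finally, $M_i^{\tau_i}=\exp(\tau_i\log M_i)$ and $\tau_i\log M_i\lesssim M_i^{-1+\delta+o(1)}\log M_i\to 0$ for $\delta<1$, so $M_i^{\tau_i}\to 1$.

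The main obstacle is a circularity. The error estimates above carry factors $M_i^{o(1)}$ coming from $p_i\neq 2$, for instance in the rescaling exponents and in the use of Proposition \ref{prop3.1}, which requires $M_i^{\tau_i}$ to be bounded. This is itself part of what we want to show. I would first establish the crude bound $\tau_i\to0$ and $\tau_i\geq 0$, so that $M_i^{2\tau_i}\geq1$ makes the lower bound free. The $o(1)$ terms in the upper bounds then come only from $(p_i-1)\to 2$ in exponents, which is harmless.
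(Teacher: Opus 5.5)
Your skeleton matches the paper's proof. You use the Pohozaev identity centered at $x_i$ on $B_{\rho_1}$, the factor $\frac{2n+2}{p_i+1}-n=\frac{2\tau_i}{3-\tau_i}$, and a lower bound for $\int\widetilde R u_i^{p_i+1}$ from Proposition \ref{prop3.1}. You then use Lemma \ref{lem3.10} for the coordinate-error term and Lemma \ref{lem3.8} for the boundary terms. Your lower bound is correct: the rescaling exponent is exactly $p_i+1-3(p_i-1)=2\tau_i\ge 0$, which is cleaner than the paper's version. Your estimates of the $Ru_i^2$, $\Xi(R)u_i^2$ and boundary terms are also correct. The circularity you worry about does not exist. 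The condition $0\le\tau_i\to0$ is a standing hypothesis, and Proposition \ref{prop3.1} rescales by $M_i^{-(p_i-1)/2}$ precisely so that the equation for $v_i$ carries no factor $M_i^{\tau_i}$.

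The step that fails is the $\Xi(\widetilde R)u_i^{p_i+1}$ term.
\begin{itemize}
\item For $n=2$ you have $p_i\to2$, so the prefactor $M_i^{-(p_i-1)/2}$ is $M_i^{-1/2+o(1)}$, not $M_i^{-1+o(1)}$.
\item You also dropped the factor $M_i^{\frac{p_i^2-1}{2}\delta}$ that Lemma \ref{lem3.9} puts on $v_i^{p_i+1}$.
\item The integral $\int|y|\,v_i^{p_i+1}\,dy$ is bounded below by a positive constant by Proposition \ref{prop3.1}, so the domain gives no extra decay.
\end{itemize}
What you actually get is
\[
\Big|\int_{B_{\rho_1}}\Xi(\widetilde R)\,u_i^{p_i+1}\Big|\lesssim M_i^{-\frac12+\frac32\delta+o(1)},
\]
and hence only $\tau_i=O(M_i^{-1/2+\frac32\delta+o(1)})$.

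Your remark ``after subtracting a possible linear part'' does not fix this. The linear part $2\,\mathrm{Re}\big(z^\alpha Z_\alpha\widetilde R(x_i)\big)$ does integrate to zero against the $z$-even profile $U^{p_i+1}$. However, bounding its integral against $v_i^{p_i+1}-U^{p_i+1}$ at the needed rate requires quantitative estimates on $v_i-U$, and those are proved only later, using this very lemma. Moreover, for $\widetilde R$ merely $C^1$ the Taylor remainder is only $o(|x|)$, which again contributes only $o(M_i^{-1/2})$.

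The paper's proof does not supply the missing idea either. Its bound $M_i^{-np_i^2+2p_i+n}$ for this term rests on the claim $\int_{B_R}|x|(1+|x|)^{-2n(p_i+1)}\lesssim R^{-2np_i+5}$. That claim is false for the same reason: the integral is bounded below.

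What your argument does prove is the stated rate when $\Xi(\widetilde R)\equiv0$, for instance when $\widetilde R$ is constant. That is exactly the case $L_Ju=u^p$ of Theorem \ref{CompactnessTheorem}. For a general positive $C^1$ function $\widetilde R$, you should state the weaker rate. It still gives $\tau_i\log M_i\to0$ for $\delta<1/3$, i.e.\ $u_i(x_i)^{\tau_i}\to1$, and that is all Lemma \ref{LemmaCSigma} and Proposition \ref{LimiteBlowUp} use.
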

\begin{proof}
Applying the Pohozaev identity of Proposition \ref{Pohozaev}
with respect to the base point $x_i$ with $r=\rho_1$: 
\begin{equation*}
\begin{split}
&\int_{B_{\rho_1}}\Bigg(\frac{1}{b_n}\left(\frac{2n+2}{p_i+1}-n\right)\widetilde{R}u_i^{p_i+1}-\frac{1}{b_n}Ru_i^2+\frac{1}{b_n(p_i+1)}
\Xi(\widetilde{R})u_i^{p_i+1}\\
&\hspace{12mm}-\frac{1}{2b_n}\Xi(R)u_i^2-(\Xi u_i+nu_i)(\Delta_bu_i-\overset{\circ}{\Delta}_b u_i)\Bigg)d\overset{\circ}{V}\\
&=\int_{\partial B_{\rho_1}}\Bigg(\left(\frac{1}{b_n(p+1)}\widetilde{R}u_i^{p_i+1}-\frac{1}{2b_n}Ru_i^2\right)
\Xi\cdot\overset{\circ}{\nu}+\ci{B}(x,u_i,\overset{\circ}{\nabla}_bu_i)\Bigg)d\overset{\circ}{\sigma}.
\end{split}
\end{equation*}
We are going to estimate each term in the above expression. 
By Lemma \ref{lem3.9}, we have  
\begin{align*}
&\int_{B_{\rho_1}}\Xi(\widetilde{R})u_i^{p_i+1}\\
&\lesssim M_i^{p_i+1}M_i^{-(n+1)(p_i-1)}\int_{B_{\rho_1M_i^{\frac{p_i-1}{2}}}}
\Big(M_i^{-\frac{p_i-1}{2}}|x|+M_i^{-(p_i-1)}|x|^2\Big)v_i^{p_i+1}\\
&\lesssim M_i^{-np_i+n+2} \Bigg(\int_{B_{\rho_1M_i^{\frac{p_i-1}{2}}}}
 M_i^{-\frac{p_i-1}{2}}|x|v_i^{p_i+1}+\int_{B_{\rho_1M_i^{\frac{p_i-1}{2}}}}M_i^{-(p_i-1)}|x|^2 v_i^{p_i+1}\Bigg)\\
&\lesssim M_i^{\frac{-(2n+1)p_i+2n+5}{2}} 
M_i^{\frac{p_i^2-1}{2}\delta}\int_{B_{\rho_1M_i^{\frac{p_i-1}{2}}}}|x|(1+|x|)^{-2n(p_i+1)}\\
&\hspace{4mm}
+M_i^{-(n+1)p_i+n+3}
M_i^{\frac{p_i^2-1}{2}\delta}\int_{B_{\rho_1M_i^{\frac{p_i-1}{2}}}}|x|^2(1+|x|)^{-2n(p_i+1)}\\
&\lesssim M_i^{\frac{-(2n+1)p_i+2n+5}{2}} 
M_i^{\frac{p_i^2-1}{2}\delta}M_i^{\frac{p_i-1}{2}(-2np_i+5)}
+M_i^{-(n+1)p_i+n+3}
M_i^{\frac{p_i^2-1}{2}\delta}M_i^{\frac{p_i-1}{2}(-2np_i+6)}\\
&=M_i^{-np_i^2+2p_i+n}M_i^{\frac{p_i^2-1}{2}\delta}. 
\end{align*}
Similarly, by Lemma \ref{lem3.9}, we have 
\begin{align*}
&\int_{B_{\rho_1}}\left(\frac{1}{b_n}R+\frac{1}{2b_n}\Xi(R)\right)u_i^2\lesssim M_i^2 M_i^{-(n+1)(p_i-1)}\int_{B_{\rho_1M_i^{\frac{p_i-1}{2}}}}v_i^2\\
&\lesssim M_i^{n+3-(n+1)p_i}M_i^{(p_i-1)\delta}\int_{B_{\rho_1M_i^{\frac{p_i-1}{2}}}}(1+|x|)^{-4n}\\
&\lesssim
\left\{
  \begin{array}{ll}
    M_i^{n+3-(n+1)p_i}M_i^{(p_i-1)\delta}\log M_i, & \hbox{if $n=2$;} \\
    M_i^{2n+2-2np_i}M_i^{(p_i-1)\delta}, & \hbox{if $n\geq 3$.}
  \end{array}
\right.
\end{align*}
Moreover, thanks to Lemma \ref{lem3.8}, we have 
\begin{align*}
&\int_{\partial B_{\rho_1}}\Bigg(\left(\frac{1}{b_n(p+1)}\widetilde{R}u_i^{p+1}-\frac{1}{2b_n}Ru_i^2\right)
\Xi\cdot\overset{\circ}{\nu}+\ci{B}(x,u_i,\overset{\circ}{\nabla}_bu_i)\Bigg)d\overset{\circ}{\sigma}\\
&=O(M_i^{-2\lambda_i})=O(M_i^{-(2n-\delta)(p_i-1)+2}).
\end{align*}
By Proposition \ref{prop3.1}, there holds 
\begin{align*}
&\int_{B_{\rho_1}}u_i^{p_i+1}
\gtrsim M_i^{\frac{p_i-1}{2}(2n+2)}M_i^{p_1+1}\int_{B_{\rho_1M_i^{\frac{p_i-1}{2}}}}\left(\frac{1}{M_i}u_i\circ\delta_{M_i^{-\frac{p_i-1}{2}}}\right)^{p_i+1}\\
&\gtrsim M_i^{(n+2)p_i-n}\int_{B_{\rho_1M_i^{\frac{p_i-1}{2}}}}\left(U\circ\delta_{\widetilde{R}(0)^{1/2}}\right)^{p_i+1}\\
&\gtrsim M_i^{(n+2)p_i-n}M_i^{\frac{3n}{4}+4-\frac{n}{2}p_i}
=M_i^{-\frac{n}{4}+2+(\frac{n}{2}+2)p_i}\gtrsim 1.
\end{align*}
Now, combining these  with Lemma \ref{lem3.10} and the fact that 
\[\displaystyle\frac{1}{b_n}\left(\frac{2n+2}{p_i+1}-n\right)
=\frac{n^2}{(p_i+1)(2n+2)}\tau_i,\] 
we prove the assertion. 
\end{proof}

\begin{lem}\label{LemmaCSigma}
Suppose that $n=2$. 
In the hypothesis of Lemma \ref{lem3.8}, 
if $\overline{x}$ is an isolated simple blow-up point, 
then for every $\sigma\in (0,\frac{r}{2})$
$$\limsup_{i\to\infty}\max_{\partial B_\sigma} M_iu_i(x)\leq C(\sigma).$$ 
\end{lem}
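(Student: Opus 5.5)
We follow the standard argument of Li--Zhu and Marques, as adapted to $\H^1$ in \cite[Lemma 4.8]{Afeltra}. The plan has two steps: first bound $u_i$ on $\partial B_\sigma$ by its value at one fixed point, then show the renormalized functions converge to something finite.

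First, Harnack reduces everything to a single point. Fix $\sigma\in(0,\frac{r}{2})$ and a point $y_\sigma\in\partial B_\sigma(x_i)$ in pseudohermitian normal coordinates. By Lemma \ref{lem3.1}, $\max_{\partial B_\sigma}u_i\le C\,u_i(y_\sigma)$. The equation $L_Ju_i=\widetilde{R}u_i^{p_i}$ has the form $L_Ju_i=V_iu_i$ with $V_i=\widetilde{R}u_i^{p_i-1}$. Away from $x_i$, the isolated blow-up bound gives $V_i\le C(\sigma)$, so the Harnack inequality for sub-Laplacians applies on compact subsets of $B_{\rho_1}\setminus\{\overline x\}$.

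Second, we argue by contradiction. Suppose $M_iu_i(y_\sigma)\to\infty$ along a subsequence. The inequality to be proved is equivalent to boundedness of $M_iu_i(y_\sigma)$, and we will establish it in the rescaled form $w_i:=u_i/u_i(y_\sigma)$.

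The function $w_i$ satisfies $L_Jw_i=\widetilde{R}u_i^{p_i-1}w_i$. By Harnack it is locally bounded in $B_{\rho_1}\setminus\{\overline x\}$. By Lemma \ref{lem3.8}, $u_i\to0$ uniformly away from $\overline x$, so the potential $\widetilde{R}u_i^{p_i-1}$ tends to $0$ there. Subelliptic Schauder estimates then give, along a subsequence, $w_i\to w$ in $C^2_{loc}(B_{\rho_1}\setminus\{\overline x\})$ with $L_Jw=0$ and $w>0$. Bôcher's theorem for $L_J$ (as used in \cite{Afeltra}) gives $w=aG_{\overline x}+h$, where $G$ is the Green function, $h$ is $L_J$-harmonic and $a\ge0$.

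We must show $a>0$, which uses the isolated simple hypothesis. The function $r^{2/(p_i-1)}\overline{u}_i(r)$ is decreasing on $(R_iM_i^{-(p_i-1)/2},\rho)$, and Lemma \ref{lem3.11} gives $M_i^{\tau_i}\to1$. Together these show that $r^{n}\overline w(r)$, the spherical average of $r^n w$, is nonincreasing on $(0,\rho)$. Since $h$ is bounded, this forces $w$ to be singular, hence $a>0$.

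Now integrate the equation over $B_\sigma(x_i)$ and pass to the limit. The right side is
$$\int_{B_\sigma}\widetilde{R}u_i^{p_i}\,\frac{1}{u_i(y_\sigma)}.$$
Proposition \ref{prop3.1} and Lemma \ref{lem3.9} give
$$\int_{B_\sigma}\widetilde{R}u_i^{p_i}\approx M_i^{-1}\int_{\H^2}\widetilde{R}(0)\,(U\circ\delta_{\widetilde{R}(0)^{1/2}})^{b_n-1}.$$
Here $M_i^{\tau_i}\to1$ is used again, and the decay $v_i\lesssim M_i^{\frac{p_i-1}{2}\delta}(1+|x|)^{-4}$ makes the tail integrable. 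Hence the right side is $O\big((M_iu_i(y_\sigma))^{-1}\big)\to0$.

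The left side reduces to boundary terms $-b_n\int_{\partial B_\sigma}\partial_\nu w_i+\int_{B_\sigma}Rw_i$. These converge, by the $C^2_{loc}$ convergence on an annulus and the uniform integrability of $w_i$ near $x_i$ from Lemma \ref{lem3.8}, to the analogous expression for $w$. For $w$ this expression equals $a$ times a positive constant plus $O(\sigma^2)$. So it stays bounded away from zero for $\sigma$ small, contradicting the right side tending to $0$. For larger $\sigma$ we then extend by Harnack.

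The main obstacle is exactly this last step: checking that $-\int_{\partial B_\sigma}\partial_\nu w+\int_{B_\sigma}Rw$ has a definite sign. Computing the Green function's flux directly in normal coordinates is messy. The cleaner route is the Pohozaev identity of Proposition \ref{Pohozaev} on $B_\sigma$. Multiply it by $M_iu_i(y_\sigma)^{-1}\cdot u_i(y_\sigma)^{-1}M_i$ and pass to the limit, using Lemmas \ref{lem3.10} and \ref{lem3.11} to kill the interior terms (for $n=2$ they are $O(M_i^{-1+\delta+o(1)})$). This yields a boundary quantity $B(\sigma,w)$ that is strictly negative when $a>0$, by the expansion of the CR Green function; this is the CR analogue of the mass computation. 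The sign is where the dimension-five error bounds are used most delicately.
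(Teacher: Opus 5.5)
Your skeleton is the paper's: normalize by $u_i(y_\sigma)$, extract a limit $w$ with $L_Jw=0$, get $a>0$ from the isolated simple property, integrate the equation over a small ball, and use $\int_{B_\sigma}u_i^{p_i}\lesssim M_i^{-1}$. But the step you single out as the main obstacle, positivity of the flux of $w$, is settled in your plan by an argument that fails.

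\textbf{The Pohozaev route does not give the flux.}
\begin{itemize}
\item The boundary term $\ci{B}$ of Proposition \ref{Pohozaev} is quadratic in $w$ and is not the flux $-b_n\int_{\de B_\sigma}\de_\nu w$. For $w=aG_{\overline{x}}+h$, the small-$\sigma$ limit of $\int_{\de B_\sigma}\ci{B}$ is governed by the constant term $aA_{\overline{x}}+h(\overline{x})$ of $w$. Its sign is not fixed by $a>0$, since $h$ is an arbitrary local $L_J$-harmonic function. This is exactly how Lemma \ref{LemmaSegnoB} combines with positive mass at the very end of the compactness proof.
\item Invoking Proposition \ref{GreenFunctionExpansion} here is circular. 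It needs $S(\overline{x})=0$, which the paper derives later from Proposition \ref{LimiteBlowUp}, and that proposition is itself a consequence of this lemma.
\item The positive mass hypothesis is global, whereas Lemma \ref{LemmaCSigma} is purely local.
\item After dividing by $u_i(y_\sigma)^2$, the interior terms are $O(M_i^{-1+\delta+o(1)})\,u_i(y_\sigma)^{-2}$ by Lemmas \ref{lem3.10} and \ref{lem3.11}. These need not vanish when you only know $M_iu_i(y_\sigma)\to\infty$.
\end{itemize}

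\textbf{The missing idea is elementary.} The B\^ocher decomposition means $L_Jw=a\,\delta_{\overline{x}}$ in the sense of distributions. The divergence theorem on $B_r\setminus B_\epsilon$, with $\epsilon\to0$, then gives $-b_n\int_{\de B_r}\de_\nu w+\int_{B_r}Rw=a$. Since $w=O(d^{-4})$ is integrable in homogeneous dimension $6$, the flux tends to $a>0$ as $r\to0$. You then integrate over such a small ball and transfer the bound to $\de B_\sigma$ by Lemma \ref{lem3.1}. This is what the paper takes from the CR version of Corollary 9.1 in \cite{Li&Zhu}; no Green function expansion beyond the leading singularity and no mass are involved.

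\textbf{Two smaller points also need repair.}
\begin{itemize}
\item You claim $\int_{B_\sigma}Rw_i\to\int_{B_\sigma}Rw$ by uniform integrability from Lemma \ref{lem3.8}. This does not hold as stated: dividing that bound by $u_i(y_\sigma)$ leaves the factor $M_i^{-\lambda_i}/u_i(y_\sigma)=M_i^{\delta/2+o(1)}/(M_iu_i(y_\sigma))$, which need not be bounded. The paper sidesteps this by first arranging $R>0$, which is possible in positive CR Yamabe class at the price of replacing $\widetilde{R}$ by another positive function. Then $-\int Rw_i\le 0$ is simply discarded, and only the boundary flux, which converges by $C^1$ convergence away from $\overline{x}$, has to be passed to the limit.
\item The bound of Lemma \ref{lem3.9} carries the factor $M_i^{\frac{p_i-1}{2}\delta}$. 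With it, the tail $\int_{R_i\le|y|\le\sigma M_i^{(p_i-1)/2}}v_i^{p_i}$ is only $O(M_i^{\delta+o(1)}R_i^{-2})$, which is unbounded since $R_i=o(\log M_i)$. That yields only $\int_{B_\sigma}u_i^{p_i}=O(M_i^{-1+\delta+o(1)})$, too weak to contradict $M_iu_i(y_\sigma)\to\infty$. You need the sharper bound $u_i\le CM_i^{-\lambda_i}d(x,x_i)^{-4+\delta}$ of Lemma \ref{lem3.8} on $r_i\le d(x,x_i)\le\sigma$. It gives a tail $O(M_i^{-1}R_i^{-2+2\delta})$, as in \eqref{3.9}.
\end{itemize}
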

\begin{proof}
Thanks to Lemma \ref{lem3.1}, 
it is sufficient to prove the statement for $\sigma$ small enough. 
In particular, as in the proof of Proposition \ref{prop3.1}, 
we can suppose that $R>0$. 

Let $x_\sigma$ be such that $d(x_\sigma,x_i)=\sigma$, 
and define $w_i(x)=u_i(x_\sigma)^{-1}u_i(x)$. 
Then $w_i$ satisfies 
\begin{equation}\label{3.6}
 L_\theta w_i=u_i(x_\sigma)^{p_i-1}w_i^{p_i}. 
\end{equation}
Thanks to Lemmas \ref{lem3.1} and \ref{lem3.8}, 
for every compact $K\subset B_{\rho_1}(\overline{x})\setminus\{\overline{x}\}$, 
there exists $C_K$ such that 
$C_K^{-1}\leq w_i\leq C_K$. 
Therefore, applying the regularity theory from \cite[Theorem 2.3]{Afeltra}, 
we can deduce that up to subsequences
$w_i\to w$ in $C^2_{loc}(B_{\rho_1}(\overline{x})\setminus\{\overline{x}\})$, 
and since, by Lemma \ref{lem3.8}, $u_i(x_\sigma)\to 0$, passing to the limit
in (\ref{3.6}), we get that $L_\theta w=0$. 

Since the blow-up is isolated simple, and since Proposition 
\ref{prop3.1}
implies that $r^{\frac{2}{p_i-1}}\overline{u}_i$ has a critical point 
in $(0,R_iM_i^{-\frac{p_1-1}{2}})$
after which it is decreasing, 
$r^{\frac{2}{p_i-1}}\overline{u}_i$ is decreasing 
in $(R_iM_i^{-\frac{p_1-1}{2}}, \rho)$, 
and because 
$$u_i(x_\sigma)^{-1}r^{\frac{2}{p_i-1}}\overline{u}_i(r)=r^{\frac{2}{p_i-1}}\overline{w}_i(r)\to r^{n}\overline{w}(r),$$
$r^{n}\overline{w}(r)$ is decreasing on $(0,\rho)$. 
Since $w>0$, $w$ must be singular at $\overline{x}$. 
Corollary 9.1 in \cite{Li&Zhu} can be extended to pseudohermitian geometry 
by repeating the proof with minor adaptations. 
Applying it, we get that 
\begin{equation}\label{3.7}
-\int_{B_\sigma(x_i)}\Delta_b w_i
=-\int_{\partial B_\sigma(x_i)}\nabla_bw_i\cdot \nu=
-\int_{\partial B_\sigma(\overline{x})}\nabla_bw\cdot \nu+o(1)
=c+o(1)>0,
\end{equation} 
while integrating (\ref{3.6}) yields 
\begin{equation}\label{3.8}
-b_n\int_{B_\sigma(x_i)}\Delta_b w_i
=\int_{B_\sigma(x_i)}(-R w_i+u_i(x_\sigma)^{p_i-1}w_i^{p_i})
\leq u_i(x_\sigma)^{-1}\int_{B_\sigma(x_i)}u_i^{p_i}.
\end{equation}
But if we call $r_i=R_iM_i^{-\frac{p_i-1}{2}}$, 
it follows from Proposition \ref{prop3.1}, 
Lemma \ref{lem3.8} and Lemma \ref{lem3.11} that 
\begin{equation}\label{3.9}
\begin{split}
&\int_{B_\sigma(x_i)}u_i^{p_i}
=\left(\int_{B_{r_i}(x_i)}+\int_{B_\sigma(x_i)\setminus B_{r_i}(x_i)}\right)u_i^{p_i}\\
&\lesssim M_i^{-(n+1)(p_i-1)}M_i^{p_i}\int_{B_{R_i}(0)}(1+|x|)^{-2np_i}
+M_i^{-\lambda_ip_i}\int_{B_\sigma(x_i)\setminus B_{r_i}(x_i)}|x|^{(-2n+\delta)p_i}\\
&\lesssim M_i^{-(n+1)(p_i-1)}M_i^{p_i}\log R_i
+M_i^{-\lambda_ip_i}\Big(R_iM_i^{-\frac{p_i-1}{2}}\Big)^{-(2n-\delta)p_i+2n+4}\lesssim M_i^{-1} 
\end{split}
\end{equation}
whenever $n=2$. 
Combining (\ref{3.7})-(\ref{3.9}), 
we see that $M_iu_i(x_\sigma)$ is a bounded sequence. 
Now the assertion follows from Lemma \ref{lem3.1}.
\end{proof}

\begin{prop}\label{LimiteBlowUp}
	If $\overline{x}$ is an isolated simple blow-up point then there exists $C$ such that
	$$M_iu_i(x)\le C d(x,x_i)^{-2n}$$
	if $d(x,x_i)\le\frac{\rho}{2}$.
	Furthermore, up to subsequences, there exists $a>0$ such that
	$$M_iu(x)\to aG_{\overline{x}}(x) + b$$
	in $C^2_{\mathrm{loc}}(B_{\frac{\rho}{2}}(\overline{x})\setminus\{\overline{x}\})$, where $G_{\overline{x}}$ is the Green function of $L_{\theta}$ (which exists because $M$ has positive CR Yamabe class)
	and $L_{\theta}b=0$ on $B_{\frac{\rho}{2}}(\overline{x})$.
\end{prop}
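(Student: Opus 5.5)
The plan is the standard Li--Zhu/Marques argument, done in two steps: first the global pointwise bound for $M_iu_i$, then the convergence of $M_iu_i$ to a multiple of the Green function plus a regular part.

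\emph{Step 1: the pointwise bound.} We split $B_{\rho/2}(x_i)$ into an inner region and an outer annulus.
In the inner region $d(x,x_i)\le R_iM_i^{-\frac{p_i-1}{2}}$ we use Proposition \ref{prop3.1}. It gives $u_i\lesssim M_i\,U\circ\delta_{M_i^{(p_i-1)/2}}$, hence
$$M_iu_i\lesssim M_i^2\bigl(1+M_i^{\frac{p_i-1}{2}}d(x,x_i)\bigr)^{-2n}.$$
By Lemma \ref{lem3.11}, $M_i^{\tau_i}\to1$, so $M_i^{(p_i-1)n}=M_i^{2-n\tau_i}\approx M_i^2$. Therefore $M_i^2\bigl(M_i^{\frac{p_i-1}{2}}d\bigr)^{-2n}\approx d^{-2n}$, and we get $M_iu_i\lesssim d(x,x_i)^{-2n}$ there.

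In the annulus $R_iM_i^{-\frac{p_i-1}{2}}\le d(x,x_i)\le\rho/2$ we cannot simply use Lemma \ref{lem3.8}: its exponent $-2n+\delta$ loses a factor $d^{-\delta}$. Instead we use a Green representation on $B_{\rho_1}(x_i)$. Let $G$ be the Green function of $L_\theta$; it exists and satisfies $G(x,y)\lesssim d(x,y)^{-2n}$ because the CR Yamabe constant is positive. Then
$$u_i(x)=\int_{B_{\rho_1}}G(x,y)\widetilde R\,u_i^{p_i}(y)\,dy+(\text{boundary term}).$$
By Lemma \ref{LemmaCSigma} and Lemma \ref{lem3.1}, the boundary term is $O(M_i^{-1})$ uniformly.

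For the integral we split at $d(y,x_i)=\tfrac12 d(x,x_i)$:
\begin{itemize}
\item Where $d(y,x_i)\le \tfrac12 d(x,x_i)$, we have $G(x,y)\lesssim d(x,x_i)^{-2n}$. The mass estimate \eqref{3.9}, $\int u_i^{p_i}\lesssim M_i^{-1}$, then gives a contribution of order $M_i^{-1}d^{-2n}$.
\item On the remaining region we use Lemma \ref{lem3.8}: $u_i^{p_i}\lesssim M_i^{-\lambda_ip_i}d^{(-2n+\delta)p_i}$. Since $p_i\approx 1+2/n$, this has strictly higher singularity than $d^{-2n}$, and integrating against $d(x,y)^{-2n}$ gives $\lesssim M_i^{-\lambda_ip_i}d(x,x_i)^{(-2n+\delta)p_i+2}$.
\end{itemize}
One checks that this second contribution is $\lesssim M_i^{-1}d^{-2n}$ throughout the annulus. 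The check uses $M_i^{-\lambda_i}\approx M_i^{-1+\delta/n}$ and $d\ge R_iM_i^{-(p_i-1)/2}$, in the same way as the exponent bookkeeping in \eqref{3.9}. If needed, we iterate the estimate once to remove the $\delta$-loss.

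This annulus step is where I expect the main obstacle. The exponents are borderline in dimension $n=2$, which is why Lemma \ref{lem3.11} must be used to control $M_i^{\tau_i}$. An alternative, if the representation argument becomes messy, is a contradiction argument: rescale at a point where $M_iu_i\,d^{2n}$ is maximal and use the harmonic limit from Lemma \ref{LemmaCSigma}.

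\emph{Step 2: convergence.} Set $w_i=M_iu_i$. By Step 1, Lemma \ref{LemmaCSigma} and Harnack (Lemma \ref{lem3.1}), $w_i$ is locally bounded above and below away from $\overline x$. It satisfies
$$L_\theta w_i=M_i^{1-p_i}\widetilde R\,w_i^{p_i},$$
and the right-hand side tends to $0$ locally uniformly on $B_{\rho/2}\setminus\{\overline x\}$. The regularity theory \cite[Theorem 2.3]{Afeltra} then gives $C^2_{\rm loc}$ convergence of a subsequence to some $w\ge0$ with $L_\theta w=0$ away from $\overline x$.

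\emph{Step 3: identifying the limit.} We identify the measure $L_\theta w$ on $B_{\rho/2}(\overline{x})$. Testing with $\varphi\in C_c^\infty$,
$$\int w_i L_\theta\varphi=\int M_iu_i\,\varphi\, L_\theta u_i/u_i .$$
The concentration part $M_i\widetilde R u_i^{p_i}$ converges weakly to $a\delta_{\overline x}$, where
$$a=\lim_i \int_{B_\sigma}M_i\widetilde Ru_i^{p_i}.$$
This limit exists by \eqref{3.9}; the scaling also uses Lemma \ref{lem3.11}. It is positive because the bubble contributes $\int \widetilde R(0)\,U^{p}$ times a positive factor. The tail vanishes by Step 1.

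Hence $L_\theta(w-aG_{\overline x})=0$ in the distributional sense on the whole ball. The bound $w\lesssim d^{-2n}$ makes $w\in L^1_{\rm loc}$, so the singularity at $\overline x$ is removable. Hypoelliptic regularity then shows that $b:=w-aG_{\overline x}$ is smooth with $L_\theta b=0$, as claimed.
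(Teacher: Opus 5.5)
Your proposal is correct, but it reaches both conclusions by a different route from the paper.

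\textbf{Pointwise bound.} The paper argues by contradiction. It picks $\widetilde{x}_i$ with $M_iu_i(\widetilde{x}_i)d(x_i,\widetilde{x}_i)^{2n}\to\infty$. Proposition~\ref{prop3.1} (with $\e_i\le e^{-R_i}$) and Lemma~\ref{lem3.11} give $d(x_i,\widetilde{x}_i)\ge R_iM_i^{-\frac{p_i-1}{2}}$. It then rescales by that distance and applies Lemma~\ref{LemmaCSigma} to the rescaled sequence. This is exactly the fallback you mention. Your direct Green representation on $B_{\rho_1}(x_i)$ works as written and needs no iteration. Set $t=M_i^{\frac{p_i-1}{2}}d(x,x_i)\ge R_i$. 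The ratio of your far contribution $M_i^{-\lambda_ip_i}d^{(-4+\delta)p_i+2}$ to $M_i^{-1}d^{-4}$ is $M_i^{2\tau_i}t^{-2+2\delta+(4-\delta)\tau_i}\lesssim R_i^{-2+2\delta}$, using $t\le\rho_1M_i$ and $M_i^{\tau_i}\to1$.

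\textbf{Identifying the limit.} The paper invokes a B\^ocher-type classification of positive solutions of $L_\theta v=0$ in a punctured ball (Proposition 9.1 of Li--Zhu, asserted to adapt to the CR setting). It then proves $a>0$ separately by rerunning the singularity argument of Lemma~\ref{LemmaCSigma}. You instead show $L_\theta w=a\delta_{\overline{x}}$ distributionally. Here $a=\widetilde{R}(\overline{x})\int_{\mathbb{H}^2}(U\circ\delta_{\widetilde{R}(0)^{1/2}})^{2}$ comes from the bubble, the scaling prefactor being $M_i^{4-2p_i}=M_i^{2\tau_i}\to1$. So positivity of $a$ is automatic, and only hypoellipticity is needed to get $b$.

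\textbf{Trade-off.} Your route is more self-contained. It avoids the CR B\^ocher theorem, and it avoids checking that Lemma~\ref{LemmaCSigma} applies uniformly to rescaled functions living on varying rescaled pseudohermitian structures. The price is that you need the bound $G(x,y)\lesssim d(x,y)^{-2n}$ and a Green identity on a Kor\'anyi ball (or a cutoff version).

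\textbf{Details to make explicit.}
\begin{itemize}
\item The inner bound $v_i\lesssim U$ on $B_{R_i}$ requires $\e_i\lesssim R_i^{-2n}$ in Proposition~\ref{prop3.1}; arrange this as the paper does.
\item Passing to the limit in $\int w_iL_\theta\varphi$ near $\overline{x}$ uses the uniform bound $w_i\lesssim d(\cdot,x_i)^{-2n}$ from your Step 1 (uniform integrability), not merely $w\in L^1_{\mathrm{loc}}$.
\item Your representation on $B_{\rho_1}$ covers only $d(x,x_i)\le\rho_1/2$. For $\rho_1/2\le d(x,x_i)\le\rho/2$, use Lemma~\ref{LemmaCSigma} together with Lemma~\ref{lem3.1} directly.
\item The lower bound on $w_i$ claimed in Step 2 does not follow from Lemma~\ref{lem3.1} alone, but it is not needed.
\end{itemize}
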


\begin{proof}
	If this were not the case, then, up to subsequences, there would exist a sequence $\widetilde{x}_i$ with $d(x_i,\widetilde{x}_i)\le\frac{\rho}{2}$ and
	\begin{equation}\label{EquazioneDimLimiteBlowUp}
		M_iu_i(\widetilde{x}_i)d(x_i,\widetilde{x}_i)^{2n-2} \to\infty.
	\end{equation}
	Define $\widetilde{r}_i=d(x_i,\widetilde{x}_i)$.
	
	Up to the passage to a subsequence such that Proposition \ref{prop3.1} holds for some $R_i\to\infty$ and $\e_i\le e^{-R_i}$, it is easy to verify, using Lemma \ref{lem3.11} and the fact that
	$\sup_{\lambda>0}\lambda^2U(\delta_{\lambda}(x))\le\frac{C}{|x|^{2n}}$,
	that eventually $\widetilde{r}_i\ge R_iM_i^{-\frac{p_i-1}{2}}$.
	
	Define $\widetilde{u}_i=\widetilde{r}_i^{\frac{2}{p_i-1}}u_i\circ\delta_{\widetilde{r}_i}\circ L_{x_i}$ in $B_2$.
	$\widetilde{u}_i$ satisfies
	$$L_{\theta_i}\widetilde{u}_i = \widetilde{R}\widetilde{u}_i^{p_i}$$
	and verifies the hypotheses of Lemma \ref{LemmaCSigma}, therefore $\max_{\de B_1}\widetilde{u}_i(0)\widetilde{u}_i <\infty$. Using the definition of $\widetilde{u}_i$ and Lemma \ref{lem3.11}, this goes in contradiction with formula \eqref{EquazioneDimLimiteBlowUp}.
	
	Hence $M_iu_i$ is locally bounded in $B_{\frac{\rho}{2}}(\overline{x})\setminus\{\overline{x}\}$, and satisfies
	$$L_{\theta}(M_iu_i) = M_i^{1-p_i}(M_iu_i)^{p_i},$$
	therefore, applying regularity theory,
	$$M_iu_i\to v \;\;\;\text{in}\;\;\; C^2_{\mathrm{loc}}(B_{\frac{\rho}{2}}(\overline{x})\setminus\{\overline{x}\})$$
	with $v$ satisfying $L_{\theta}v=0$.
	Known results about singular solutions (see for example Proposition 9.1 in \cite{Li&Zhu}, which can adapted without difficulty to pseudohermitian geometry) imply the rest of the thesis, except the fact that $a>0$.
	This can be proved by proving that $v$ must be singular with the same proof of Lemma \ref{LemmaCSigma}.
\end{proof}

In order to proceed we need a generalization of Theorem \ref{TheoremMalchiodiUguzzoni}.

\begin{lemma}\label{GeneralizedMUlemma}
	If $u$ is a function on $\H^n$ verifying
	$$-\Delta_b u = U^{\frac{Q+2}{Q-2}}u$$
	and $\lim_{x\to\infty}u=0$, then $u$ is one of the Malchiodi-Uguzzoni solutions of Theorem \ref{TheoremMalchiodiUguzzoni}.
\end{lemma}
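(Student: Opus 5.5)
The plan is to reduce the statement to Theorem \ref{TheoremMalchiodiUguzzoni}. The only missing ingredient is membership $u\in S^1(\H^n)$. Once we know $u$ has finite Dirichlet energy and lies in $L^{\frac{2Q}{Q-2}}$, the classification of Malchiodi--Uguzzoni applies directly, after absorbing constants so the equation reads $-b_n\Delta_b u=\frac{n+2}{n}U^{2/n}u$. Note that $\frac{Q+2}{Q-2}-1=\frac{4}{Q-2}=\frac{2}{n}$, so the potential is $U^{2/n}$ up to normalization of $U$. So the whole task is to upgrade the qualitative hypothesis $\lim_{x\to\infty}u=0$ to the quantitative decay
\[
|u(x)|\lesssim (1+|x|)^{-(Q-2)}=(1+|x|)^{-2n},
\]
that is, the decay of the fundamental solution. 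Together with horizontal-derivative bounds $|\nabla_b u|\lesssim(1+|x|)^{-2n-1}$, this gives $u\in S^1(\H^n)$, because $\int(1+|x|)^{-4n-2}\,dx<\infty$ in homogeneous dimension $Q=2n+2$.

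To get the decay, I would use the CR inversion (the analogue of the Kelvin transform on $\H^n$, coming from the Cayley transform) to move the point at infinity to the origin. Set
\[
\hat u(z,t)=|x|^{-2n}\,u\bigl(\iota(x)\bigr),\qquad \iota(z,t)=\Bigl(\frac{-z}{|z|^2-it},\frac{-t}{|z|^4+t^2}\Bigr),
\]
with $|x|$ the Korányi norm. By the conformal covariance of $\Delta_b$ under $\iota$, $\hat u$ solves $-\Delta_b\hat u=V\hat u$ on $\H^n\setminus\{0\}$, where $V=|x|^{-4}U^{2/n}\circ\iota$. Since $U^{2/n}\sim|x|^{-4}$ at infinity, $V$ is bounded near $0$; in fact $\hat U=U$ by the inversion invariance of the bubble, so $V=U^{2/n}$. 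The hypothesis $u\to0$ at infinity gives $\hat u=o(|x|^{-2n})$ near $0$, which is strictly below the singularity of the fundamental solution $\Gamma\sim|x|^{-2n}$. By a removable-singularity argument for $-\Delta_b-V$ with bounded $V$, $\hat u$ extends to a weak, hence (by hypoelliptic regularity) smooth, solution near $0$. The removable-singularity step can be done by comparing with $\epsilon\Gamma$ plus a harmonic part, or by testing with cutoffs $\eta_\epsilon$ whose gradients are supported on annuli $B_{2\epsilon}\setminus B_\epsilon$. Boundedness of $\hat u$ near $0$ then translates back to $|u(x)|\lesssim|x|^{-2n}$ at infinity. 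Derivative bounds follow from interior subelliptic (Folland--Stein) estimates on dilated unit annuli: rescaling $B_{2R}\setminus B_R$ to a fixed annulus gives $|\nabla_b u|\lesssim R^{-2n-1}$ there.

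The main obstacle is the removable-singularity step. A sublinear rate $o(|x|^{-2n})$ alone does not make $\hat u\in S^1_{loc}$, so I cannot invoke the energy-based argument directly. I would first apply Moser iteration, or a maximum-principle comparison on small balls, to show that $\hat u$ is bounded near $0$. Concretely, on $B_r\setminus B_\epsilon$ I would compare $\pm\hat u$ with $C+\epsilon' \Gamma$, using that for $r$ small the operator $-\Delta_b-V$ satisfies the maximum principle (since $V$ is bounded and $r$ is small), and then let $\epsilon\to0$ followed by $\epsilon'\to0$. Boundedness gives local finite energy, and the cutoff argument then shows the equation holds across $0$. With $u\in S^1(\H^n)$ established, Theorem \ref{TheoremMalchiodiUguzzoni} concludes that $u=a\Xi U+\gamma TU+\sum_\alpha(\mu_\alpha Z_\alpha U+\overline{\mu_\alpha}Z_{\overline\alpha}U)$.
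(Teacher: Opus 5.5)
Your reduction is sound: Kelvin transform to the origin (with $\hat U=U$, so the transformed potential is again $V=U^{2/n}$, bounded near $0$), then removal of the singularity of $\hat u$, then back to $|u|\lesssim(1+|x|)^{-2n}$ and $|\nabla_b u|\lesssim(1+|x|)^{-2n-1}$ by rescaled interior estimates, hence $u\in S^1(\H^n)$, then Theorem~\ref{TheoremMalchiodiUguzzoni}. However, the barrier you propose for the key step fails as stated. With $L=-\Delta_b-V$ and $V>0$,
\[
L(C+\epsilon'\Gamma)=-V(C+\epsilon'\Gamma)<0 \quad\text{on } B_r\setminus\{0\},
\]
so $C+\epsilon'\Gamma$ is a strict \emph{sub}solution. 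Consequently $L(\pm\hat u-C-\epsilon'\Gamma)=V(C+\epsilon'\Gamma)>0$, which has the wrong sign for the maximum principle to bound $\pm\hat u$ from above. This is so even though the maximum principle for $L$ on small balls is true, by coercivity of $\int|\nabla_b\phi|^2-\int V\phi^2$ on $S^1_0(B_r)$.

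You need genuine supersolutions. Replace $C$ by $Ch$ with $h=2-K|z|^2$: since $\Delta_b|z|^2=2n$, this satisfies $h\ge 1$ and $Lh=2nK-Vh\ge 0$ on $B_r$ once $K\ge\|V\|_\infty/n$ and $Kr^2\le 1$. Replace $\Gamma$ by the Dirichlet Green function $\Gamma_r$ of $L$ on $B_r$ with pole at $0$; for small $r$ it exists, is positive, and satisfies $\Gamma_r\ge c\,\Gamma$ near $0$. Then $\pm\hat u\le Ch+\epsilon'\Gamma_r$ on $B_r\setminus B_\epsilon$ for $\epsilon$ small, and letting $\epsilon\to 0$ and then $\epsilon'\to 0$ gives $|\hat u|\le Ch$. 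Alternatively, keep only $-\Delta_b$, treat $V\hat u\in L^1_{\mathrm{loc}}$ as a source, add $\Gamma*(\chi_{B_r}|V\hat u|)$ to the barrier, and bootstrap. Once boundedness is in hand, the rest of your argument goes through: Caccioppoli on dyadic annuli for finite energy, the cutoff argument, and hypoellipticity.

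The paper removes the singularity distributionally instead. Since $|x|^{Q-2}\hat u\to 0$, we have $\hat u\in L^1_{\mathrm{loc}}$, so $-\Delta_b\hat u-V\hat u$ is a distribution supported at $\{0\}$, hence a finite combination of derivatives of $\delta$. The paper then writes $\phi\hat u=\Gamma*(V\phi\hat u)+\Gamma*\psi+\sum_\alpha\pm a^\alpha\partial_\alpha\Gamma$. The first two terms are $O(|x|^{4-Q})$ (or $O(|\log|x||)$ if $n=1$), while any nonzero combination of derivatives of $\Gamma$ is at least as singular as $|x|^{2-Q}$, so all $a^\alpha=0$. That argument uses only local boundedness of $V$, with no sign or smallness condition and no barrier, and it yields a global distributional solution directly. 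Your route costs the coercivity and barrier construction. In exchange, it makes explicit what the paper leaves implicit: regularity of $\hat u$ at $0$ must be converted into $u\in S^1(\H^n)$ before Malchiodi--Uguzzoni can be invoked.
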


\begin{proof}
	The Cayley transform $v$ of $u$ verifies
	$$-\Delta_b v = U^{\frac{Q+2}{Q-2}}v$$
	and $\lim_{x\to 0}|x|^{Q-2}v=0$. In particular $v\in L^1_{loc}(\H^n)$, and therefore it is a distribution. So $-\Delta_b v - U^{\frac{Q+2}{Q-2}}v$ Is a distribution on $\H^n$ whose restriction to $\H^n\setminus\{0\}$ is zero, and therefore it is equal to $\sum_{\alfa}a^{\alfa}\de_{\alfa}\delta$.
    Hence if $\phi$ is a smooth compactly supported function which is one in a neighborhood of the origin,
	$$-\Delta_b (\phi v)- U^{\frac{Q+2}{Q-2}}\phi v = \psi + \sum_{\alfa}a^{\alfa}\de_{\alfa}\delta $$
	for a smooth compactly supported function $\psi$, therefore
	$$\phi v  = G*(U^{\frac{Q+2}{Q-2}}\phi v)  + G*\psi + \sum_{\alfa}a^{\alfa}(-1)^{|\alfa|}\de_{\alfa}G. $$
	Since $|U^{\frac{Q+2}{Q-2}}\phi v|\lesssim \frac{1}{|x|^{Q-2}}$, $|G*(U^{\frac{Q+2}{Q-2}}\phi v)|\lesssim\frac{1}{|x|^{Q-4}}$ for $n\ge 2$ and or $|G*(U^{\frac{Q+2}{Q-2}}\phi v)|\lesssim|\log |x||$ for $n=1$.
	This and the fact that $\lim_{x\to 0}|x|^{Q-2}v=0$ imply that $a^{\alfa}=0$. Therefore $-\Delta_b v = U^{\frac{Q+2}{Q-2}}v$ on $\H^1$ and by regularity theorem it is smooth, therefore the result by Malchiodi and Uguzzoni applies.
\end{proof}

From now on $2n+1=5$.

\begin{lemma}
	If $\overline{x}$ is an isolated simple blow-up point then there exists $\gamma>0$ such that
	$$|v_i-U|\lesssim\max\{M_1^{-2}\log M_i,\tau_i\}$$
	for $|x|\le\gamma M_i^{\frac{p_i-1}{2}}$ (where $\tau_i=\frac{n+2}{n}-p_i$).
\end{lemma}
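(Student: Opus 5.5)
We argue by contradiction, in the spirit of Marques' symmetry estimates (\cite{M}, Lemma analogous to Proposition 4.3 there). Set $\Lambda_i=\max_{|x|\le\gamma M_i^{\frac{p_i-1}{2}}}|v_i-U|$, where $U$ denotes the standard bubble normalized so that $U(0)=1$ and $\nabla_bU(0)=0$ (up to the dilation by $\widetilde{R}(0)^{1/2}$ from Proposition \ref{prop3.1}). Suppose that $\Lambda_i/\max\{M_i^{-2}\log M_i,\tau_i\}\to\infty$ along a subsequence. Let $y_i$ be a point where the maximum is attained. The plan is to study the normalized difference
$$w_i=\Lambda_i^{-1}(v_i-U),$$
which satisfies $|w_i|\le1$ on $|x|\le\gamma M_i^{\frac{p_i-1}{2}}$. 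We then show that $w_i$ converges to a nontrivial element of the kernel described in Theorem \ref{TheoremMalchiodiUguzzoni}. This is incompatible with the normalization $w_i(0)=0$, $\nabla_bw_i(0)=0$ and with $w_i(y_i)=\pm1$.

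\textbf{Step 1: the equation for $w_i$.} Subtracting the equations for $v_i$ and $U$ gives
$$-b_n\Delta_{b}w_i-c_iw_i=\Lambda_i^{-1}Q_i,$$
where $c_i=\frac{v_i^{p_i}-U^{p_i}}{v_i-U}\to\frac{n+2}{n}U^{2/n}$ locally uniformly. The error $Q_i$ collects three contributions:
\begin{itemize}
\item the difference $\Delta_{b,\theta_i}-\overset{\circ}{\Delta}_b$, which by Lemma \ref{SublaplacianoCoordinate} and the rescaling carries factors $M_i^{-(p_i-1)}|x|^2$ and better;
\item the scalar curvature term $M_i^{-(p_i-1)}R\,v_i$;
\item the exponent mismatch $U^{p_i}-U^{b_n-1}=O(\tau_i U^{b_n-1}|\log U|)$, together with the Taylor term of $\widetilde R$.
\end{itemize}
Using Lemma \ref{lem3.9} and Lemma \ref{lem3.11} (so that $M_i^{\tau_i}\to1$ and $M_i^{-(p_i-1)}\sim M_i^{-1}$), we aim for the pointwise bound
$$|Q_i(x)|\lesssim M_i^{-1}(1+|x|)^{-2n}+\tau_i(1+|x|)^{-2n-2}\log(2+|x|)$$
when $n=2$.

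\textbf{Step 2: Green representation and decay.} Represent $w_i$ on $B_{\gamma M_i^{(p_i-1)/2}}$ through the Green function of $L_{\theta_i}$ on that ball, with a boundary contribution. By Lemma \ref{lem3.9} and Proposition \ref{LimiteBlowUp}, $v_i$ and $U$ are both $O(M_i^{-1}\cdot M_i^{\ldots})$ there, so the boundary term is $\lesssim M_i^{-2}\Lambda_i^{-1}$, which tends to $0$. Convolving the bound on $Q_i$ with $|x|^{-(Q-2)}$ gives a contribution $\lesssim \Lambda_i^{-1}\max\{M_i^{-2}\log M_i,\tau_i\}$; the logarithm appears exactly because the integral $\int|x|^{-4}$ over a ball of radius comparable to $M_i^{1/2}$ in $Q=6$ produces it. The term $c_iw_i$, with $c_i\lesssim(1+|x|)^{-4}$, yields the iterative decay estimate
$$|w_i(x)|\lesssim(1+|x|)^{-2}+o(1).$$
Iterating this bound once more upgrades it to $|w_i|\lesssim(1+|x|)^{-4}+o(1)$. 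Consequently $|y_i|$ stays bounded, since $|w_i(y_i)|=1$.

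\textbf{Step 3: passage to the limit and contradiction.} By subelliptic regularity, $w_i\to w$ in $C^2_{loc}(\H^n)$. The limit solves the linearized equation, satisfies $w\to0$ at infinity, and has $|w(y)|=1$ for some $y$. Lemma \ref{GeneralizedMUlemma} then writes $w$ as a combination $a\Xi U+\gamma TU+\sum(\mu_\alpha Z_\alpha U+\overline{\mu_\alpha}Z_{\overline\alpha}U)$. The conditions $w(0)=0$ and $\nabla_bw(0)=0$ kill $a$ and $\mu_\alpha$. The remaining coefficient $\gamma$ should be killed by using $T w(0)=0$; this holds because $x_i$ is a local maximum, so all first derivatives of $v_i$ vanish, and $TU(0)=0$. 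Hence $w\equiv0$, which contradicts $|w(y)|=1$.

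The main obstacle is Step 2: obtaining the sharp error bound on $Q_i$, including the $|x|^6(v_i)_{,00}$ terms handled as in Lemma \ref{lem3.10}. A cruder bound would lose the $M_i^{-2}\log M_i$ rate.
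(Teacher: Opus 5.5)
Your overall architecture is the same as the paper's: argue by contradiction, set $w_i=\Lambda_i^{-1}(v_i-U)$, use a Green representation on $B_{\ell_i}$, pass to a limit in the kernel of the linearized operator, and kill the kernel with the normalization at $0$. However, Step 2 does not close with the error bound you actually write down. From $|Q_i|\lesssim M_i^{-1}(1+|x|)^{-4}+\dots$, convolve against $|y-\xi|^{-4}$ over $B_{\ell_i}$, with $\ell_i\approx\gamma M_i^{1/2}$, in homogeneous dimension $6$. This gives $M_i^{-1}(1+|y|)^{-2}$, not $\Lambda_i$-small multiples of $M_i^{-2}\log M_i$. The contradiction hypothesis only gives $\Lambda_i\gg\max\{M_i^{-2}\log M_i,\tau_i\}$, so $\Lambda_i^{-1}M_i^{-1}$ may blow up. In that case neither the decay bound $|w_i|\lesssim(1+|x|)^{-2}+o(1)$ nor the homogeneity of the limiting equation follows, and the kernel argument collapses. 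With your bound the scheme only proves $|v_i-U|\lesssim\max\{M_i^{-1},\tau_i\}$. Your account of the logarithm is also incorrect: $\int_{B_{\ell_i}}|x|^{-4}\,dx\sim\ell_i^2$. A logarithm arises only from a source of size $M_i^{-2}|\xi|^{-2}$, through $\int_{1<|\xi|<\ell_i}|\xi|^{-4}M_i^{-2}|\xi|^{-2}\,d\xi\sim M_i^{-2}\log M_i$.

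The missing input is precisely what makes the source that small. The paper uses that in Jerison--Lee CR normal coordinates $R=O(|x|^2)$. Hence the curvature term $M_i^{-(p_i-1)}(R\circ\delta_{M_i^{-(p_i-1)/2}})U$ is $\lesssim M_i^{-2}|y|^2(1+|y|)^{-4}$. Together with the boundary contribution $\lesssim M_i^{-2}$, this is where $t_i=M_i^{-2}\log M_i$ comes from. The boundary bound follows from $v_i,U\lesssim|y|^{-4}$ on $\partial B_{\ell_i}$, by Proposition~\ref{LimiteBlowUp} and $M_i^{\tau_i}\to1$.

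You instead bound the curvature term as if $R(0)\neq 0$. More seriously, the operator difference cannot be handled by a direct application of Lemma~\ref{SublaplacianoCoordinate}. Its $O(|x|)\overset{\circ}{Z}$, $O(|x|^2)\overset{\circ}{T}$, $O(|x|^2)\overset{\circ}{Z}\overset{\circ}{Z}$ and $O(|x|^3)\overset{\circ}{Z}\overset{\circ}{T}$ pieces each rescale to size $M_i^{-1}(1+|y|)^{-4}$ when applied to $U$. To reach the stated rate you must show that on the bubble these contributions are at most $O(M_i^{-2}(1+|y|)^{-2})$. This is the CR analogue of Marques's symmetry for radial functions in conformal normal coordinates, and it is the real content of the lemma, not a bookkeeping refinement.

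A minor point in Step 3: $w(0)=0$ kills the dilation coefficient only because the correct kernel element is $\Xi U+nU$. Since $\Xi U$ and all its first derivatives vanish at $0$, a term $a\,\Xi U$ as literally written in Theorem~\ref{TheoremMalchiodiUguzzoni} would not be detected by first-order data at the origin.
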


\begin{proof}
	Let $\ell_i=\gamma M_i^{\frac{p_i-1}{2}}$ and
	$$\Lambda_i=\max_{|y|\le\ell_i}|v_i(y)-U(y)|$$
	realized by some $y_i$.
	If $|y_i|\ge c\ell_i$ for some $c$ then, since $w_i\lesssim U$ and $U(y)\lesssim\frac{1}{|y|^{-2n}}$ on the considered domain, then
	$$|v_i(y_i)-U(y_i)| \lesssim\frac{1}{|y_i|^{-2n}}\lesssim\ell_i^{-2n}\lesssim M_i^{-2}$$
	(using the lemma...) and therefore by definition of $y_i$ we get the thesis.
	Therefore we can suppose that $|y_i|\le\frac{\ell_i}{2}$.
	Define
	$$w_i(x)= \frac{1}{\Lambda_i}(v_i(x)-U(x)).$$
	Then $w_i$ satisfies
	$$L_{\widetilde{\theta_i}}w_i(x) = \frac{1}{\Lambda_i}v_i^{p_i}(x)-\frac{1}{\Lambda_i}U(x)^{\frac{n+2}{n-2}} +\frac{1}{\Lambda_i}(L_{\widetilde{\theta_i}}-\Delta_{\theta_{\H^2}})U = $$
	$$= \frac{1}{\Lambda_i}(v_i^{p_i}-U^{p_i}) + \frac{1}{\Lambda_i}\left(U^{p_i}-U^{\frac{n+2}{n-2}} + (L_{\widetilde{\theta_i}}-\Delta_{\theta_{\H^2}})U\right) =$$
	
	$$= \frac{v_i^{p_i}-U^{p_i}}{v_i-U}w_i +$$
	$$+ \frac{1}{\Lambda_i}\left(U^{p_i}-U^{\frac{n+2}{n-2}} + M_i^{-(p_i-1)}R\circ\delta_{M_i^{-\frac{p_i-1}{2}}}U + O\left(M_i^{-(p_i-1)}(1+|x|)^{-2}\right) \right)$$
	(where, we recall, $\theta_i=M_i^{-(p_i-1)}\left(\delta_{M_i^{-\frac{p_i-1}{2}}}\circ L_{x_i}\right)^*\theta$).
	
	Calling $b_i=\frac{v_i^{p_i}-U^{p_i}}{v_i-U}$ by the estimates of the preceding lemmas $|b_i(x)|\lesssim(1+|x|)^{-4}$ for $|x|\le\ell_i$.
	Call also $Q_i= L_{\widetilde{\theta_i}}w_i-b_iw_i$.
	
	Then
	$$w_i(y)=\int_{B_{\ell_i}}G_i(y,\xi)\left(b_i(\xi)w_i(\xi)-Q_i(\xi)\right)d\xi -\int_{\de B_{\ell_i}}\ni_{\xi} G_i$$
	
	Let $t_i=M_i^{-2}\log M_i$. By contradiction suppose that $\Lambda_i^{-1}\max\{t_i,\tau_i\}\to 0$.
	In CR normal coordinates $R=O(|x|^2)$ (see \cite{Jerison&Lee2}), then
	$$|Q_i(y)|\lesssim \Lambda_i^{-1}\left(\tau_i\log U(1+|x|)^{-8} + M_i^{-\frac{2}{3}}|x|^2(1+|x|)^{-6} + O\left(M_i^{-4/3}(1+|x|)^{-2}\right)\right)$$
	
	By estimates we saw $|w_i|\lesssim\Lambda_i^{-1}M_i^{-2}$, and it is standard that $|G_i(y-\xi)|\lesssim |y-\xi|^{-4}$ for $|y-\xi|\le\frac{\ell_i}{2}$, therefore
	\begin{equation}\label{Estimatew_i}
		|w_i(y)|\lesssim \left((1+|y|)^{-2} +\Lambda_i\log M_i M_i^{-2}\right).
	\end{equation}
	By Schauder estimates up to subsequences $w_i\to w$ in the subriemannian Hölder space
    satisfying
	$$L_{\theta_{\H^2}}w = Uw$$
	and $|w(y)|\lesssim(1+|y|)^{-2}$. By Lemma \ref{GeneralizedMUlemma}, $w$ is one of the Malchiodi-Uguzzoni solutions. The definition of $v$ implies that $w(0)=dw(0)=0$, and therefore $w=0$, which implies (since $w_i(y_i)=1$) that $|y_i|\to\infty$, but this goes against equation \eqref{Estimatew_i} since we had supposed by contradiction that $\Lambda_i\log M_i M_i^{-2}\to 0$.
\end{proof}

\begin{lemma}
	$$\tau_i\lesssim\log M_i M_i^{-2}.$$
\end{lemma}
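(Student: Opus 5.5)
Throughout, $n=2$, so $Q=6$, $b_n-1=2$, and $\tau_i=2-p_i$. The plan is to apply the Pohozaev identity of Proposition \ref{Pohozaev}, centred at $x_i$, on the ball $B_{\rho_1}(x_i)$. This time, however, every interior term is estimated with the sharper comparison $|v_i-U|\lesssim\max\{M_i^{-2}\log M_i,\tau_i\}$ from the previous lemma, instead of the cruder bounds of Lemma \ref{lem3.9} that were used for Lemma \ref{lem3.11}.

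The coefficient of the leading term is $\frac{1}{b_n}\left(\frac{2n+2}{p_i+1}-n\right)=c\,\tau_i$ with $c>0$. By Proposition \ref{prop3.1} and Lemma \ref{lem3.11} we have $\int_{B_{\rho_1}}\widetilde{R}u_i^{p_i+1}\gtrsim 1$, so the left side contains $c\,\tau_i\cdot(\text{positive constant})$. It then suffices to show that every remaining term is $O(M_i^{-2}\log M_i)+o(\tau_i)$.

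\textbf{Boundary terms.} Proposition \ref{LimiteBlowUp} gives $u_i=O(M_i^{-1})$ together with its derivatives on $\partial B_{\rho_1}$. Hence the boundary integral is $O(M_i^{-2})$.

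\textbf{The $Ru_i^2$ and $\Xi(R)u_i^2$ terms.} After rescaling these become $M_i^{2}M_i^{-3(p_i-1)}\int M_i^{-(p_i-1)}R\circ\delta\, v_i^2$. Using $v_i\lesssim(1+|x|)^{-4}$ (the $M_i^{\delta}$ loss disappears once Proposition \ref{LimiteBlowUp} is available) and the fact that the rescaled $R$ is bounded, this is $\lesssim M_i^{-2}\int_{|x|\le\rho_1 M_i^{1/2}}(1+|x|)^{-8}\lesssim M_i^{-2}\log M_i$.

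\textbf{The $\Xi(\widetilde R)u_i^{p_i+1}$ term.} It is enough to take $\widetilde R$ constant, as in \eqref{3.1} with the normalisation of Theorem \ref{CompactnessTheorem}, so this term vanishes. If $\widetilde R$ is not constant I would expand $\Xi\widetilde R$ at $x_i$: the linear part integrates against $U^{p+1}+O(\Lambda_i)$, and by the oddness of $U$ in $z$ the error is $o(\tau_i)+O(M_i^{-2}\log M_i)$. This is the delicate part in general, but I expect the constant case to be the one needed.

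\textbf{The commutator term $(\Xi u_i+nu_i)(\Delta_b-\overset{\circ}{\Delta}_b)u_i$.} This is the main obstacle. Lemma \ref{lem3.10} only gives $M_i^{-1+\delta}$, which is insufficient. The idea is to write $v_i=U+(v_i-U)$. For the pure $U$ contribution I would use the structure of $\Delta_b-\overset{\circ}{\Delta}_b$ in pseudohermitian normal coordinates (the Appendix lemma \ref{SublaplacianoCoordinate}). Its first-order coefficients are $O(|x|)$ terms linear in torsion and curvature at $x_i$. Against the radially symmetric, circle-invariant $U$ and $\Xi U+nU$, these odd or traceless leading terms integrate to zero by symmetry, exactly as in Marques' symmetry estimates. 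The surviving terms are quadratic, $O(|x|^2)$ coefficients times second derivatives, and give after rescaling $M_i^{-2}\int(1+|x|)^{-8}|x|^2\cdot\ldots\lesssim M_i^{-2}\log M_i$. The cross terms with $v_i-U$ carry an extra factor $\max\{M_i^{-2}\log M_i,\tau_i\}$ multiplied by $M_i^{-1}\log M_i$ or smaller. They are therefore $o(\tau_i)+O(M_i^{-2}\log M_i)$.

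\textbf{Conclusion.} Collecting the estimates gives $c\,\tau_i\le C M_i^{-2}\log M_i+o(\tau_i)$, which proves the lemma. The main thing to verify is the vanishing of the first-order part of the commutator against $U$: this requires checking that the $O(|x|)$ coefficients in the coordinate expansion involve only $A_{\alpha\beta}(x_i)$, which appears with $z^\alpha z^\beta$-type angular factors and averages to zero over the circle action, and that no $T$-derivative term of lower homogeneity survives.
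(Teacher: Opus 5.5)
There is a genuine gap in your power counting, and it hides exactly where the difficulty lies. Using the Pohozaev identity is a reasonable alternative to the paper's route. The paper argues by contradiction: it sets $w_i=(v_i-U)/\tau_i$ and passes to the linearized limit equation, whose source comes from $\tau_i^{-1}(U^{p_i}-U^2)$. It then tests against the dilation kernel element $\psi=2U+\Xi U$, uses $\int_{\H^2}\psi\,U^2\log U\neq 0$, and works in CR normal coordinates, where $R=O(|x|^2)$.

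\textbf{The curvature term.} Since $u_i=M_i\,v_i\circ\delta_{M_i^{(p_i-1)/2}}$ and $dV=M_i^{-3(p_i-1)}dV_y$, you have
\[
\int_{B_{\rho_1}}Ru_i^2\,dV=M_i^{2-3(p_i-1)}\int_{B_{\rho_1M_i^{(p_i-1)/2}}}\big(R\circ\delta_{M_i^{-(p_i-1)/2}}\big)\,v_i^2\,dV_y ,
\]
with no extra factor $M_i^{-(p_i-1)}$. That factor appears only if you use the curvature of the rescaled structure, and then the prefactor is $M_i^{2-2(p_i-1)}$. Also $\int_{\H^2}(1+|y|)^{-8}dV_y<\infty$ because $Q=6$, so no logarithm arises. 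With $R$ merely bounded, this term is $\approx R(x_i)M_i^{-1}\int_{\H^2}U^2$, not $O(M_i^{-2}\log M_i)$.

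\textbf{The commutator term.} The same slip occurs here. After rescaling, all of the following carry the same factor $M_i^{-(p_i-1)}\approx M_i^{-1}$ with convergent integrals:
\begin{itemize}
\item the $O(|x|)$ coefficients of first derivatives;
\item the $O(|x|^2)$ coefficients of $T$ and $\overset{\circ}{Z}\overset{\circ}{Z}$;
\item the $O(|x|^3)$ coefficients of $\overset{\circ}{Z}T$.
\end{itemize}
Together they form an $O(M_i^{-1})$ layer, precisely the one in Lemma \ref{lem3.10}. The next layer is $O(M_i^{-3/2})$, and only the one after that gives $M_i^{-2}\log M_i$.

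The $M_i^{-1}$ layer is not torsion-only. As with Christoffel symbols in Riemannian normal coordinates, its coefficients are linear in the full curvature at $x_i$. Since $U$ and $\Xi U$ are $U(2)$-invariant, symmetry only reduces the total pure-bubble $M_i^{-1}$ contribution (commutator plus $Ru_i^2$) to $\kappa\,R(x_i)M_i^{-1}$ for a universal constant $\kappa$; it does not make it vanish. As written, your argument yields only $\tau_i\lesssim M_i^{-1}$, essentially Lemma \ref{lem3.11}.

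\textbf{How to close the gap.} One option is to compute $\kappa$ and show it is zero, a CR analogue of Aubin's cancellation, which you do not address. The other is to do what the paper does: first change the contact form conformally at $x_i$ to a CR normal one, so that $R=O(|x|^2)$.
\begin{itemize}
\item This is incompatible with your reduction to constant $\widetilde R$. With $\hat\theta=\phi_i\theta$ and $\hat u_i=\phi_i^{-1}u_i$, the subcritical equation becomes $L_{\hat\theta}\hat u_i=\phi_i^{-\tau_i}\hat u_i^{p_i}$, which is why the blow-up analysis allows a general $\widetilde R$.
\item The non-constant $\widetilde R$ is harmless: $\Xi\widetilde R=O(\tau_i|x|)$ contributes $O(\tau_iM_i^{-1/2})=o(\tau_i)$.
\item After normalizing, the $M_i^{-1}$ layer vanishes because $\kappa R(x_i)=0$.
\item The $M_i^{-3/2}$ layer vanishes by the parity $(z,t)\mapsto(-z,t)$.
\item The condition $R=O(|x|^2)$ makes the curvature term $O(M_i^{-2}\log M_i)$.
\end{itemize}

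\textbf{Cross terms.} These are not controlled by the uniform bound alone. Pairing $|v_i-U|\lesssim\Lambda_i$ with the $M_i^{-1}$ layer gives $\Lambda_iM_i^{-1}\int_{|y|\lesssim M_i^{1/2}}(1+|y|)^{-4}dV_y\sim\Lambda_i$, which is not small relative to $\tau_i$. You need to interpolate with the decay $|v_i-U|\lesssim(1+|y|)^{-4}$, together with Schauder bounds for the derivatives. This gives $O(M_i^{-1}\Lambda_i^{1/2})\le\varepsilon\tau_i+C_\varepsilon M_i^{-2}\log M_i$, which is enough.
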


\begin{proof}
	If this were not true then by the former lemma $|v_i-U_0|\lesssim\tau_i$.
	Calling
	$$w_i(x)= \frac{1}{\tau_i}(v_i(x)-U(x)),$$
	$w_i$ satisfies
	$$L_{\widetilde{\theta_i}}w_i(x) = $$
	$$= \frac{v_i^{p_i}-U^{p_i}}{v_i-U}w_i + \widetilde{Q}_i$$
	where
	$$\widetilde{Q}_i = \frac{1}{\tau_i}O\left(\tau_i\log U(1+|x|)^{-8} + M_i^{-\frac{2}{3}}|x|^2(1+|x|)^{-6} +M_i^{-(p_i-1)}(1+|x|)^{-2}\right).$$
	Suppose by contradiction that $\log M_i M_i^{-2}\tau_i^{-1}\to 0$.
	By Schauder estimates $w_i\to w$ on compact sets. Let $\psi=\frac{Q-2}{2}U+\Xi U$.
	Then we have
	$$\int_{|y|\le\ell_i/2}\psi\frac{1}{\tau_i}O\left(M_i^{-\frac{2}{3}}|x|^2(1+|x|)^{-6} +M_i^{-4/3}(1+|x|)^{-2}\right) \to 0$$
	therefore
	$$\lim_{i\to\infty}\int_{|y|\le\ell_i/2}\psi\widetilde{Q}_i = \int_{\H^2}\psi\log U U^2.$$
	
	At the same time
	$$\int_{|y|\le\ell_i/2}\psi\widetilde{Q}_i = \int_{|y|\le\ell_i/2}\psi(L_{\widetilde{\theta}_i}w_i+b_iw_i)=$$
	$$= \int_{|y|\le\ell_i/2}(L_{\widetilde{\theta}_i}\psi+b_i\psi)w_i + \int_{|y|=\ell_i/2}\left(\psi\ni w_i -w_i\ni\psi \right)$$
	therefore passing to the limit
	$$\lim_{i\to\infty}\int_{|y|\le\ell_i/2}\psi\widetilde{Q}_i = \int_{\H^2}(L_{\H^2}\psi+U^2\psi)w =0$$
	which is a contradiction.
\end{proof}

\begin{cor}
	If $\overline{x}$ is an isolated simple blow-up point then there exists $\gamma>0$ such that
	$$|v_i-U|\lesssim M_1^{-2}\log M_i$$
	for $|x|\le\gamma M_i^{\frac{p_i-1}{2}}$ (where $\tau_i=\frac{n+2}{n}-p_i$).
\end{cor}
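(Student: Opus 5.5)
The corollary follows by combining the two preceding lemmas; no new analysis is needed. I take the same $\gamma>0$ as in the first of the two lemmas, the one asserting
$$|v_i-U|\lesssim\max\{M_i^{-2}\log M_i,\tau_i\}\quad\text{for } |x|\le\gamma M_i^{\frac{p_i-1}{2}}.$$
That lemma holds under exactly the present hypothesis: $\overline{x}$ is an isolated simple blow-up point and $2n+1=5$.

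Next I use the second lemma, $\tau_i\lesssim M_i^{-2}\log M_i$. This gives
$$\max\{M_i^{-2}\log M_i,\tau_i\}\lesssim M_i^{-2}\log M_i.$$
Substituting into the first bound yields $|v_i-U|\lesssim M_i^{-2}\log M_i$ on the same region $|x|\le\gamma M_i^{\frac{p_i-1}{2}}$, which is the claim.

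I read the $M_1$ in the statement as $M_i$, consistent with the lemmas. The implicit constants are uniform in $i$, possibly after passing to a subsequence, because both lemmas are proved with uniform constants.

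The only point to check is that the second lemma is genuinely unconditional. Its proof argues by contradiction starting from the bound $|v_i-U|\lesssim\tau_i$, and that bound is supplied by the first lemma in the case $\tau_i\gg M_i^{-2}\log M_i$. So there is no circularity: the first lemma is proved independently, then the second, then the corollary.
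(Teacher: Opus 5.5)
Your proposal is correct and matches the paper's intent. The paper states the corollary without proof, immediately after the lemma giving $|v_i-U|\lesssim\max\{M_i^{-2}\log M_i,\tau_i\}$ and the lemma $\tau_i\lesssim M_i^{-2}\log M_i$, as their direct combination, and you are right to read $M_1$ as a typo for $M_i$.
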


Arguing in a similar way as in the previous lemmas, following the proof of Proposition 5.5 and the following Remark 1 in \cite{M}, we get also the following estimates.

\begin{lemma}
	In the previous hypotheses,
	$$|v_i-U|(y) \lesssim M_i^{-3/2}(1+|y|)^{-1}$$
	$$|\nabla_b(v_i-U)|(y) \lesssim M_i^{-3/2}(1+|y|)^{-2}$$
	$$|\nabla_b^2(v_i-U)|(y) \lesssim M_i^{-3/2}(1+|y|)^{-3}$$
\end{lemma}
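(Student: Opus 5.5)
The plan is to follow the scheme of Proposition 5.5 and Remark 1 in \cite{M}: first upgrade the pointwise bound of the preceding Corollary to a bound with decay in $|y|$, using a Green representation, and then get the derivative bounds from rescaled Schauder estimates. Throughout we work in pseudohermitian normal coordinates centered at $x_i$. We use $n=2$ (so $Q=6$), the Corollary's bound $|v_i-U|\lesssim M_i^{-2}\log M_i\le M_i^{-3/2}$ on $|y|\le \gamma M_i^{\frac{p_i-1}{2}}=:\ell_i$, and the bound $\tau_i\lesssim M_i^{-2}\log M_i$ from the preceding Lemma. Outside $B_{\ell_i}$, Lemma \ref{lem3.9} together with Proposition \ref{LimiteBlowUp} already give $v_i+U\lesssim(1+|y|)^{-4}$. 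This is at most $M_i^{-3/2}(1+|y|)^{-1}$ as soon as $|y|^{3}\gtrsim M_i^{3/2}$, which holds there since $M_i^{\frac{p_i-1}{2}}\sim M_i^{1/2}$ by Lemma \ref{lem3.11}. So only the region $|y|\le \ell_i$ needs work.

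\emph{Step 1 (equation for the difference).} Set $w_i=v_i-U$ on $B_{\ell_i}$. As in the previous lemmas, it satisfies
\[
L_{\theta_i}w_i-b_iw_i=Q_i,\qquad |b_i|\lesssim(1+|y|)^{-4}.
\]
Using $R=O(|x|^2)$ in normal coordinates, Lemma \ref{SublaplacianoCoordinate} (the expansion of $\Delta_b-\overset{\circ}{\Delta}_b$ with coefficients $O(|x|)$, $O(|x|^2)$, \dots) applied to $U$ after rescaling by $\delta_{M_i^{-\frac{p_i-1}{2}}}$, and $\tau_i\lesssim M_i^{-2}\log M_i$, we get
\[
|Q_i(y)|\lesssim M_i^{-\frac{p_i-1}{2}}|y|(1+|y|)^{-7}+M_i^{-(p_i-1)}(1+|y|)^{-4}+\tau_i|\log U|(1+|y|)^{-8}.
\]
The first term dominates at scale $M_i^{-1/2}$. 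I will simply bound $|Q_i|\lesssim M_i^{-1/2}(1+|y|)^{-6}+M_i^{-1}(1+|y|)^{-4}$. One has to check that the exponent $-3/2$ of the statement is consistent with this. If $Q_i$ were only of order $M_i^{-1/2}$, the decay argument alone could not produce $M_i^{-3/2}$. So the real gain must come from the cancellation of the first-order term: in normal coordinates the $O(|x|)$ coefficients of $\Delta_b-\overset{\circ}{\Delta}_b$ vanish to the order given by \eqref{RelazioniInverseCoordinatePH}, so that the leading error is actually $O(|x|^3)$ in $Z$-derivatives. That gives $|Q_i|\lesssim M_i^{-3/2}|y|^3(1+|y|)^{-9}+M_i^{-2}\log M_i(1+|y|)^{-6}$. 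I would verify this cancellation first, since the whole exponent hinges on it.

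\emph{Step 2 (Green representation and iteration).} Write
\[
w_i(y)=\int_{B_{\ell_i}}G_i(y,\xi)\big(b_iw_i+Q_i\big)(\xi)\,d\xi+(\text{boundary term}),
\]
where $|G_i(y,\xi)|\lesssim|y-\xi|^{-4}$. The boundary term on $\partial B_{\ell_i}$ is $O(\ell_i^{-4})=O(M_i^{-2})$ by the outer estimates. Start from $|w_i|\lesssim M_i^{-3/2}$ and insert it into the integral. The standard convolution bound $\int|y-\xi|^{-4}(1+|\xi|)^{-k}\,d\xi\lesssim(1+|y|)^{2-k}$ for $2<k<6$, together with $(1+|\xi|)^{-6+3}$ from $Q_i$ and $(1+|\xi|)^{-4}$ from $b_i$, yields decay $(1+|y|)^{-1}$ after one or two iterations. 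More precisely, $Q_i$ contributes $M_i^{-3/2}(1+|y|)^{-1}$, and $b_iw_i$ improves the decay of $w_i$ by two powers at each step until it saturates at the rate set by $Q_i$. Logarithmic losses from the borderline case $k=6$ are absorbed because the claim only asks for $(1+|y|)^{-1}$.

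\emph{Step 3 (derivatives).} For $|y|=r\ge1$, rescale $B_{r/2}(y)$ to unit size via $\delta_r$. The rescaled difference satisfies a uniformly subelliptic equation with right-hand side $r^2(b_iw_i+Q_i)\circ\delta_r$, and by Step 2 this is $\lesssim M_i^{-3/2}r^{-1}$. Interior subriemannian Schauder estimates (\cite[Theorem 2.3]{Afeltra}) then give $|\nabla_b w_i|\lesssim M_i^{-3/2}r^{-2}$ and $|\nabla_b^2w_i|\lesssim M_i^{-3/2}r^{-3}$. Near the origin the bounds follow directly from Schauder estimates on $B_2$.

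The main obstacle is Step 1: confirming the order of cancellation in the coordinate expansion of $L_{\theta_i}U-\overset{\circ}{L}U$. I expect this to require using the symmetry of $U$ (radial in $z$, even in $t$) to kill the odd-order terms, as Marques does in the Riemannian case.
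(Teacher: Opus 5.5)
Your architecture is the scheme the paper invokes by citing Proposition 5.5 and Remark 1 of \cite{M}: a Green representation of $v_i-U$ on $B_{\ell_i}$, the outer bound beyond $\ell_i$, and rescaled Schauder estimates for the derivatives. Steps 2 and 3 are fine once the inhomogeneous term is controlled. The gap is Step 1. It is the only place the rate $M_i^{-3/2}$ can come from, you leave it unproved, and the justification you sketch does not hold.

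Set $\epsilon=M_i^{-\frac{p_i-1}{2}}$. A coefficient $O(|x|^k)$ in front of a derivative of homogeneous degree $d$ becomes, after rescaling, a term of size $\epsilon^{2+k-d}|y|^k$. Here $d=1$ for $\overset{\circ}{Z}$, $d=2$ for $\overset{\circ}{Z}\overset{\circ}{Z}$ and $T$, and $d=3$ for $\overset{\circ}{Z}T$. So in Lemma \ref{SublaplacianoCoordinate} the terms $O(|x|)\overset{\circ}{Z}$, $O(|x|^2)\overset{\circ}{Z}\overset{\circ}{Z}$, $O(|x|^2)T$ and $O(|x|^3)\overset{\circ}{Z}T$ all enter at order $\epsilon^2\approx M_i^{-1}$. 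Each contributes $M_i^{-1}(1+|y|)^{-4}$ when applied to $U$. This is also why your first term $M_i^{-\frac{p_i-1}{2}}|y|(1+|y|)^{-7}$ is mis-scaled.

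Formula \eqref{RelazioniInverseCoordinatePH} does not kill the $O(|x|)$ first-order coefficient. That coefficient comes from the connection form \eqref{omega} and is kept explicitly in Lemma \ref{SublaplacianoCoordinate}. Even if it did vanish, your revised bound silently drops the second-order terms of the same order.

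Parity does not help either. Under $z\mapsto -z$ the term $O(|x|^k)D_dU$ has parity $(-1)^{k+d}$, so all order-$\epsilon^2$ terms are even. The symmetry of $U$ can only affect the odd, order-$\epsilon^3$ terms, of size $M_i^{-3/2}(1+|y|)^{-3}$, and those are compatible with the stated rate anyway.

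With only $|Q_i|\lesssim M_i^{-1}(1+|y|)^{-4}$, your Step 2 gives $|v_i-U|\lesssim M_i^{-1}(1+|y|)^{-2}$. That is weaker than the claim for $|y|\ll M_i^{1/2}$.

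The missing ingredient is that the order-$\epsilon^2$ part of $(L_{\theta_i}-\overset{\circ}{L})U$ vanishes identically. Then the error is $O\big(M_i^{-3/2}(1+|y|)^{-3}+M_i^{-2}|y|^2(1+|y|)^{-4}\big)$ plus the $\tau_i$-term. This plays the role of the Riemannian fact used in \cite{M}: in conformal normal coordinates the Laplacian acts on radial functions as the flat one up to high order. Here it must be extracted from the finer structure of CR normal coordinates, for instance $R_{\alpha\bar\beta}=0$ at the center (Proposition 3.12 in \cite{Jerison&Lee2}). It must also use the fact that $U$ depends only on $|z|^2$ and $t$.

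Once this is established, Step 2 closes, because
\[
\int_{B_{\ell_i}}|y-\xi|^{-4}(1+|\xi|)^{-3}\,d\xi\lesssim(1+|y|)^{-1},\qquad \int_{B_{\ell_i}}|y-\xi|^{-4}(1+|\xi|)^{-2}\,d\xi\lesssim 1+\log\tfrac{\ell_i}{1+|y|},
\]
and $M_i^{-2}\big(1+\log\frac{M_i^{1/2}}{1+|y|}\big)\lesssim M_i^{-3/2}(1+|y|)^{-1}$ for $|y|\le\ell_i$. This is exactly where the exponent $-3/2$ and the decay $(1+|y|)^{-1}$ come from. Your Step 3 then yields the derivative bounds.
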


\begin{lemma}
 If $\overline{x}$ is an isolated simple blow-up point then $S(\overline{x})=0$ (where $S$ is the Chern tensor defined in Section \ref{SectionPreliminaries}).
\end{lemma}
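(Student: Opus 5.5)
The argument is a Pohozaev-type computation in the spirit of Marques' \emph{symmetry estimates} \cite{M}, which reads off the Chern tensor from the leading correction term. We work in pseudohermitian normal coordinates centred at $x_i$ (recentred so they converge to those at $\overline{x}$). We apply Proposition \ref{Pohozaev} on $B_{\rho_1}(x_i)$, with $\rho_1$ as in Lemma \ref{lem3.8}, and compare the orders of magnitude of all terms in powers of $M_i$.

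\textbf{Step 1: the terms we can already control.} By Proposition \ref{LimiteBlowUp}, the boundary terms on $\partial B_{\rho_1}$ are $O(M_i^{-2})$. The interior terms are handled as follows.
\begin{itemize}
\item The curvature term $\int Ru_i^2$ and the $\Xi(R)u_i^2$ term: since $R=O(|x|^2)$ in normal coordinates, these are $O(M_i^{-2}\log M_i)$ once we rescale and use the pointwise bound $|v_i-U|\lesssim M_i^{-2}\log M_i$.
\item The $\tau_i$ term: it is $O(M_i^{-2}\log M_i)$ by the lemma giving $\tau_i\lesssim M_i^{-2}\log M_i$.
\end{itemize}
The term that remains is $\int(\Xi u_i+nu_i)(\Delta_bu_i-\overset{\circ}{\Delta}_bu_i)$, and its leading part must also be $O(M_i^{-2}\log M_i)$.

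\textbf{Step 2: expanding the remaining term.} We write $\Delta_b-\overset{\circ}{\Delta}_b$ in normal coordinates using Lemma \ref{SublaplacianoCoordinate} and the expansions of the frame in the Appendix. In dimension five the first nonvanishing contribution comes from the quadratic part of the coframe. Its coefficients are given by the pseudohermitian curvature at $\overline{x}$, roughly $R_{\alfa\overline{\beta}\gamma\overline{\sigma}}z^\gamma\overline{z}^\sigma\,\partial_{\alfa\overline\beta}$-type terms, plus terms involving torsion and derivatives. After rescaling, this part is of order $M_i^{-2/n\cdot 2}=M_i^{-2}$ times a weight growing like $|y|^2$ acting on second derivatives of $v_i$.

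We then substitute $v_i=U+(v_i-U)$. The error lemma gives $|\nabla_b^k(v_i-U)|\lesssim M_i^{-3/2}(1+|y|)^{-1-k}$, so the cross terms are of lower order after integration over $|y|\le\rho_1M_i^{(p_i-1)/2}$. The integral with $U$ in place of $v_i$ diverges logarithmically, so the principal term equals
\[
c\,M_i^{-2}\log M_i\, Q(\overline{x})+O(M_i^{-2}),
\]
where $Q(\overline{x})$ is a quadratic expression in the curvature at $\overline{x}$. Since $U$ depends only on $|z|^2$ and $t$, the angular averages kill all but the contractions of the form $|R_{\alfa\overline\beta\gamma\overline\sigma}|^2$. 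The choice of normal coordinates of \cite{Jerison&Lee2} (where the traces $R_{\alfa\overline\beta}$ and $R$ vanish at the centre to appropriate order) makes these contractions reduce to $c|S(\overline{x})|^2$.

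\textbf{Step 3: conclusion, and the main obstacle.} Step 1 shows every other term in the identity is $O(M_i^{-2})$ up to the same $\log M_i$ bookkeeping, but without a $\log M_i$ coefficient. To make that precise we will refine the $\tau_i$ and $R$ estimates using the sharper $M_i^{-3/2}(1+|y|)^{-1}$ bound and the symmetry of $U$. Dividing the identity by $M_i^{-2}\log M_i$ and letting $i\to\infty$ then forces $|S(\overline{x})|^2=0$.

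The hard step is the explicit computation in Step 2. It requires showing that the coefficient of the $\log M_i$ divergence is a positive multiple of $|S|^2$, not some other curvature quantity that might cancel. We will first try to organise it as in Marques \cite{M}: integrate by parts on each sphere $\partial B_r$ using the $U(1)$ and Heisenberg rotation symmetry of $U$, reducing to a one-dimensional radial integral with positive integrand times $|S|^2$. We also need that torsion and Ricci contributions either vanish at the centre or produce only $O(M_i^{-2})$ terms.
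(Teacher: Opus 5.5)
Your framework matches the paper: the Pohozaev identity on a small ball, boundary terms of size $O(M_i^{-2})$ via Proposition~\ref{LimiteBlowUp}, and replacement of $v_i$ by $U$ using the $M_i^{-3/2}$ estimates. But there is a genuine gap in how you close the argument. Dividing by $M_i^{-2}\log M_i$ and letting $i\to\infty$ requires the $\tau_i$-term and the $Ru_i^2$-terms to be $o(M_i^{-2}\log M_i)$, and neither estimate is available.

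The $\tau_i$-term is $\frac{1}{b_n}\bigl(\frac{2n+2}{p_i+1}-n\bigr)\int\widetilde{R}u_i^{p_i+1}$. It is comparable to $\tau_i$, because $\int u_i^{p_i+1}$ tends to a positive constant. The only information you have is $\tau_i\lesssim M_i^{-2}\log M_i$, which is exactly the order you want to isolate, and nothing in your Step 3 improves it. The missing idea is that this term has a sign. Since $p_i\le b_n-1$, its coefficient is nonnegative, so the paper simply discards it and gets a one-sided bound $A_i(r)\lesssim 1$, where
$A_i(r)=M_i^2\int_{B_r}\bigl(-\frac13(R+\frac12\Xi(R))u_i^2-(\Xi u_i+2u_i)(\Delta_bu_i-\overset{\circ}{\Delta}_bu_i)\bigr)$. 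After replacing $v_i$ by $U$ at cost $O(1)$, it suffices to show that the bubble term is \emph{bounded below} by $c\,|S(\overline{x})|^2\log M_i$ with $c>0$. So you must compute a signed quantity, with the sign opposite to the discarded $\tau_i$-term. A two-sided ``divide and pass to the limit'' argument does not close.

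The $Ru_i^2$-term also cannot be refined away; in the paper it is exactly where $|S(\overline{x})|^2$ appears. With the bubble in place of $u_i$, after rescaling it is essentially $M_i^{-2}\int_{|y|\lesssim M_i^{1/2}}R^{(2)}U^2$, where $R^{(2)}$ is the weight-two part of $R$. The integrand decays like $|y|^{-6}$ in homogeneous dimension $6$. Hence there is a genuine $\log M_i$ term, whose coefficient is the average of $R^{(2)}$ over Kor\'anyi spheres. By the Jerison--Lee computation the paper invokes (their Proposition 4.2), this average is a negative multiple of $|S(\overline{x})|^2$, which gives $-\frac13M_i^2\int(R+\frac12\Xi(R))U^2\gtrsim|S(\overline{x})|^2\log M_i$. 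The paper treats the $(\Xi U+2U)(\Delta_b'U-\overset{\circ}{\Delta}_bU)$ term as the harmless one, so your Steps 2--3 have the roles reversed.

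Moreover, Step 2 as written cannot produce $|S|^2$. The weight-two coefficients of $\Delta_b-\overset{\circ}{\Delta}_b$ are \emph{linear} in the curvature and torsion at $\overline{x}$. Integrating them against the unitarily invariant bubble therefore gives only linear invariants (traces, which vanish at the centre of normal coordinates), never $|R_{\alpha\overline{\beta}\gamma\overline{\sigma}}|^2$.

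Your scaling there is also off. One has $|x|^2=M_i^{-(p_i-1)}|y|^2\approx M_i^{-1}|y|^2$, not $M_i^{-2}|y|^2$, and $(\Xi U+2U)\,|y|^2|\nabla_b^2U|\sim|y|^{-8}$ is integrable. So this piece is a convergent $O(M_i^{-1})$ contribution, not a logarithmic one at order $M_i^{-2}$. As with the Riemannian identity $\Delta R(0)=-\frac16|W|^2$ in conformal normal coordinates, the quadratic dependence on curvature enters through the second-order Taylor coefficients of $R$.
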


\begin{proof}
 We want to apply the Pohozaev identity from Proposition \ref{Pohozaev}.
	Now
	$$A_i(r) := M_i^2\int_{B_r}\left(-\frac{1}{3}\left(R+\frac{1}{2}\Xi(R)\right)u_i^2 -\left(\Xi u_i+2u_i\right)(\Delta_bu_i(x)-\cerchio{\Delta}_bu_i)\right)d\cerchio{V}=$$
	$$=M_i^2M_i^{2-2(p_i-1)}\int_{B_{rM_i^{\frac{p_i-1}{2}}}}\left(-\frac{1}{3}M_i^{-(p_i-1)}\left(R+\frac{1}{2}\Xi(R)\right)\circ\delta_{-M_i^{\frac{p_i-1}{2}}}v_i^2+ \right.$$
	$$\left. -\left(\Xi v_i+2v_i\right)(\Delta_b'v_i-\cerchio{\Delta}_bv_i)\right)d\cerchio{V}$$
	Defining
	$$\hat{A}_i(r)=M_i^2M_i^{2-2(p_i-1)}\int_{B_{rM_i^{\frac{p_i-1}{2}}}}\left(-\frac{1}{3}M_i^{-(p_i-1)}\left(R+\frac{1}{2}\Xi(R)\right)\circ\delta_{-M_i^{\frac{p_i-1}{2}}}U^2 + \right.$$
    $$\left. -\left(\Xi U+2U\right)(\Delta_b'U-\cerchio{\Delta}_bU)\right)d\cerchio{V}$$
	then
	$$|A_i(r)-\hat{A}_i(r)| \lesssim M_i\int_{B_{rM_i^{\frac{p_i-1}{2}}}}\left(|v_i-U|(y)(1+|y|)^{-4} +\right.$$
	$$\left.+ |\nabla_b(v_i-U)|(y)(1+|y|)^{-3} + |\nabla^2_b(v_i-U)|(y)(1+|y|)^{-2}\right) \lesssim$$
	$$\lesssim M_i^{-1/2}\int_{B_{rM_i^{\frac{p_i-1}{2}}}}(1+|y|)^{-5} \lesssim 1.$$
	Finally
	$$\int_{B_r}\left(\frac{Q}{p+1}-\frac{Q-2}{2}\right)\widetilde{R}u^{p+1} \ge 0$$
	and therefore, since $|M_i^2\ci{B}(x,u,\cerchio{\nabla}_bu_i)|\lesssim 1$ by the fact that $M_iu_i$ has limit in $\Gamma^{2,\alfa}$ in the compacts of $M\setminus\{x\}$, we get that $\hat{A}_i(r)\lesssim 1$.
	Now
	$$M_i^2M_i^{2-2(p_i-1)}\int_{B_{rM_i^{\frac{p_i-1}{2}}}}\left(\Xi U+2U\right)(\Delta_b'U-\cerchio{\Delta}_bU)d\cerchio{V}\lesssim$$
	$$\lesssim M_i^2M_i^{2-2(p_i-1)}M_i^{-(p_i-1)}\int_{B_{rM_i^{\frac{p_i-1}{2}}}}(1+|y|)^{-4}(1+|y|)^{-2} \lesssim 1$$ 
	By following the proof of Proposition 4.2 in \cite{Jerison&Lee2}, it can be shown that
	$$M_i^2\int_{B_{rM_i^{\frac{p_i-1}{2}}}}\left(-\frac{1}{3}M_i^{-(p_i-1)}\left(R+\frac{1}{2}\Xi(R)\right)\circ\delta_{-M_i^{\frac{p_i-1}{2}}}U^2\right)d\cerchio{V} \gtrsim |S(x)|^2\log M_i$$
	therefore the thesis follows.
\end{proof}

\begin{lemma}\label{LemmaSegnoB}
	If $\overline{x}$ is an isolated simple blow-up point and $M_iu_i\to h$ in $M\setminus\{\overline{x}\}$ then
	$$\liminf_{r\to 0}\ci{B}(x,h,\cerchio{\nabla}_bh)\ge 0.$$
\end{lemma}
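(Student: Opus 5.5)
The plan is to multiply the Pohozaev identity of Proposition \ref{Pohozaev} (centered at $x_i$, on $B_r(x_i)$ with $r<\rho/2$ fixed) by $M_i^2$ and let $i\to\infty$, following the strategy of Marques \cite{M}. We will show that the left-hand side is bounded below by $-C r^{\kappa}$ for some $\kappa>0$ independent of $i$. We will also show that the right-hand side converges to $\int_{\partial B_r}\ci{B}(x,h,\cerchio{\nabla}_bh)\,d\cerchio{\sigma}$ plus a term that vanishes as $r\to 0$. Letting $r\to0$ then gives the claim.

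\emph{Right-hand side.} By Proposition \ref{LimiteBlowUp}, $M_iu_i\to h=aG_{\overline{x}}+b$ in $C^2_{\mathrm{loc}}(B_{\rho/2}(\overline{x})\setminus\{\overline{x}\})$, and $x_i\to\overline{x}$. Hence $M_i^2\ci{B}(x,u_i,\cerchio{\nabla}_bu_i)$ and $M_i^2Ru_i^2\,\Xi\cdot\cerchio{\nu}$ converge uniformly on $\partial B_r$ to the corresponding quantities for $h$. The term $M_i^2\widetilde{R}u_i^{p_i+1}=M_i^{1-p_i}(M_iu_i)^{p_i+1}$ tends to $0$ on $\partial B_r$, since $M_iu_i$ is bounded there and $p_i>1$. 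For the term involving $R$, we use that in pseudohermitian normal coordinates $R=O(|x|^2)$ (\cite{Jerison&Lee2}), that $|\Xi\cdot\cerchio{\nu}|\lesssim r$, and that $h\lesssim r^{-4}$ on $\partial B_r$ (since $2n=4$). With $|\partial B_r|\sim r^{5}$, the limit of $\int_{\partial B_r}M_i^2Ru_i^2\,\Xi\cdot\cerchio{\nu}$ is $O(r^{2}\cdot r^{-8}\cdot r\cdot r^{5})=O(1)$. This is not small, so it needs a finer look. Here we will exploit that the singular part of $h$ is $aG_{\overline{x}}$, which is close to $a|x|^{-4}$ up to lower order, together with the fact that the expansion of $R$ has no term that survives integration against a radial weight once $S(\overline{x})=0$ (previous lemma). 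If this cancellation fails, we fall back on absorbing this term into the $R$-part of the interior estimate, exactly as in the treatment of $\hat{A}_i(r)$.

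\emph{Left-hand side.} The term $\frac{1}{b_n}\big(\frac{2n+2}{p_i+1}-n\big)\widetilde{R}u_i^{p_i+1}$ is nonnegative because $p_i\le b_n-1$, so it can be discarded from below. The term with $\Xi(\widetilde{R})$, times $M_i^2$, is controlled by the computation in Lemma \ref{lem3.11} and tends to $0$. What remains is exactly $-A_i(r)$ from the proof of the previous lemma. We split it as $A_i(r)=\hat A_i(r)+(A_i-\hat A_i)(r)$ and handle the two pieces as follows.
\begin{itemize}
\item For the difference, the refined estimates $|\nabla_b^k(v_i-U)|\lesssim M_i^{-3/2}(1+|y|)^{-1-k}$ give $|A_i-\hat A_i|\lesssim M_i^{-1/2}\int_{B_{rM_i^{(p_i-1)/2}}}(1+|y|)^{-5}$. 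We will track the $r$-dependence carefully, aiming to show this is $\lesssim r$ uniformly in $i$, which is the estimate needed.
\item For $\hat A_i(r)$, the model part, we use $S(\overline{x})=0$. Following Jerison--Lee \cite{Jerison&Lee2}, the logarithmically divergent contribution $|S|^2\log M_i$ disappears. The remaining terms of $R$ and of $\Delta_b-\cerchio{\Delta}_b$ in normal coordinates are of higher homogeneity, so after rescaling they contribute $O(r^{\kappa})$ for some $\kappa>0$.
\end{itemize}

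\emph{Main obstacle.} The crux is this last point: obtaining the bound $O(r^\kappa)$, uniform in $i$, for $\hat A_i(r)$ when $S(\overline{x})=0$. This requires the explicit structure of the expansions of $R$ and $\Delta_b-\cerchio{\Delta}_b$ in pseudohermitian normal coordinates (Lemma \ref{SublaplacianoCoordinate}) and the symmetry of $U$, so that odd-degree terms integrate to zero. The borderline terms, those scaling like $\int(1+|y|)^{-6}$ in homogeneous dimension $6$, are the ones that could produce $\log$ factors, and they must be shown to be controlled by $|S(\overline{x})|$. I would first try to show that, on the Heisenberg model, the quadratic part of $R$ and the corresponding part of the sublaplacian perturbation integrate against $U^2$ and $(\Xi U+2U)\,\cdot$ to a multiple of $|S(\overline{x})|^2$, exactly as in the Jerison--Lee test-function computation. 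Once this is done, the Pohozaev identity in the limit yields $\int_{\partial B_r}\ci{B}(x,h,\cerchio{\nabla}_bh)\ge -Cr^{\kappa}-o_r(1)$, and taking $\liminf_{r\to0}$ concludes.
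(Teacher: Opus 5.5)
Your architecture matches the paper's. You center the Pohozaev identity at $x_i$ on $B_r$, multiply by $M_i^2$, drop the nonnegative $\tau_i$-term, pass to the limit on the boundary via Proposition \ref{LimiteBlowUp}, and seek a lower bound for $A_i(r)=\hat A_i(r)+(A_i-\hat A_i)(r)$, with the difference of order $r$. Your use of $S(\overline{x})=0$ for the limiting boundary $R$-term is also legitimate, since that term is computed for $h$ in coordinates centred at $\overline{x}$.

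The gap is in your treatment of $\hat A_i(r)$. You aim for $\hat A_i(r)=O(r^\kappa)$ uniformly in $i$, on the grounds that the logarithmic contribution disappears because $S(\overline{x})=0$. But $\hat A_i(r)$ lives in normal coordinates centred at $x_i$. Its log-divergent part is therefore a multiple of $|S(x_i)|^2\log\bigl(rM_i^{(p_i-1)/2}\bigr)$, not of $|S(\overline{x})|^2$. The previous lemma only gives $|S(x_i)|^2\log M_i\lesssim 1$. Since nothing controls the rate at which $x_i\to\overline{x}$, this quantity may converge, along a subsequence, to some $\ell>0$. In that case $\lim_i\hat A_i(r)=\ell+O(r)$, which is not $O(r^\kappa)$. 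Concretely, the region of $B_r$ away from the bubble contributes only $O(r)$ precisely because $S(\overline{x})=0$, but the bubble scale carries $\ell$, which does not depend on $r$ and does not go away.

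The missing idea, which is exactly what the paper's proof uses, is the sign of this term rather than its size. You only need a lower bound on $A_i(r)$. By the Jerison--Lee computation (Proposition 4.2 in \cite{Jerison&Lee2}), the bubble-scale contribution of the quadratic parts of $R$ and of $\Delta_b-\cerchio{\Delta}_b$ is a nonnegative multiple of $|S(x_i)|^2\log\bigl(rM_i^{(p_i-1)/2}\bigr)$. This is the same term that, through the upper bound $\hat A_i(r)\lesssim 1$, forced $S(\overline{x})=0$ in the previous lemma. So you should discard it rather than try to make it vanish. Doing so gives $\liminf_i A_i(r)\ge -Cr$, and the rest of your argument then goes through.

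A minor point: the $\Xi(\widetilde R)$-term is zero only because $\widetilde R$ is constant in Theorem \ref{CompactnessTheorem}. The bound from Lemma \ref{lem3.11}, multiplied by $M_i^2$, would not tend to zero.
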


\begin{proof}
	Using the Pohozaev identity it can be proved like in Lemma 4.14 in \cite{Afeltra}, in a manner similar to the proof of the former lemma, that
	$$\liminf_{r\to 0}\ci{B}(x,h,\cerchio{\nabla}_bh)\gtrsim \liminf_{r\to 0}\int_{B_{rM_i^{\frac{p_i-1}{2}}}}(\Xi U +2U)RU$$
	which is $\ge 0$ reasoning as in the proof of Proposition 4.2 in \cite{Jerison&Lee2}
\end{proof}

\begin{lemma}
 Isolated blow-up points are isolated simple.
\end{lemma}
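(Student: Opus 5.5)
We argue by contradiction, following the scheme of Li--Zhu and Marques (as adapted in \cite{Afeltra}). Suppose $\overline{x}$ is an isolated blow-up point which is not isolated simple.

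\textbf{Step 1: two critical points of $\overline{w}_i$.} By Proposition \ref{prop3.1}, $r^{\frac{2}{p_i-1}}\overline{u}_i(r)$ has exactly one critical point in $(0,R_iM_i^{-\frac{p_i-1}{2}})$, since the rescaled profile is close to the bubble $U\circ\delta_{\widetilde{R}(0)^{1/2}}$. Hence failure of simplicity produces a second critical point $\mu_i\ge R_iM_i^{-\frac{p_i-1}{2}}$ of $\overline{w}_i$ with $\mu_i\to 0$.

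\textbf{Step 2: rescale at scale $\mu_i$.} Set
\[
\xi_i=\mu_i^{\frac{2}{p_i-1}}\,u_i\circ\delta_{\mu_i}\circ L_{x_i},
\]
which solves the rescaled equation $L_{\theta_i}\xi_i=\widetilde{R}\,\xi_i^{p_i}$ on $B_{\overline{r}/\mu_i}$. In these variables the origin is still an isolated blow-up point, with the same constant $C$ in the upper bound. Moreover, by construction $r^{\frac{2}{p_i-1}}\overline{\xi}_i(r)$ has exactly one critical point in $(0,1)$ and has derivative zero at $r=1$. Therefore $0$ is an isolated simple blow-up point for $\xi_i$, with $\rho=1$.

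\textbf{Step 3: limit of $\xi_i(0)\xi_i$.} Since $\xi_i(0)\to\infty$ (because $\mu_i\gg M_i^{-\frac{p_i-1}{2}}$), all the preceding machinery applies, in particular Lemma \ref{LemmaCSigma}, Proposition \ref{LimiteBlowUp} and Lemma \ref{lem3.11}. The pseudohermitian structure here is the rescaled one, which tends to the flat Heisenberg structure, so the Green function limit is the Heisenberg one. Concretely, $\xi_i(0)\xi_i\to h$ in $C^2_{\mathrm{loc}}(\H^2\setminus\{0\})$, where $h=a|x|^{-4}+b$ with $a>0$ and $b$ an $L$-harmonic function on all of $\H^2$. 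Because $\xi_i>0$, we get $b\ge0$, and a Liouville theorem for nonnegative $\Delta_b$-harmonic functions on $\H^2$ makes $b$ constant. The vanishing of the derivative of $r^{2}\overline{h}(r)$ at $r=1$ then forces $b=a>0$. Here we use that $\frac{2}{p_i-1}\to n=2$ by Lemma \ref{lem3.11}.

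\textbf{Step 4: contradiction with Lemma \ref{LemmaSegnoB}.} For $h=a|x|^{-4}+b$ on the flat Heisenberg group, a direct computation of the boundary term $\ci{B}(x,h,\cerchio{\nabla}_bh)$ on small Korányi spheres gives a limit of the form $-c\,ab$ with $c>0$. The pure $|x|^{-4}$ terms cancel by homogeneity, because $|x|^{-4}$ is the fundamental solution and $\ci{B}$ vanishes on it. This limit is strictly negative, contradicting Lemma \ref{LemmaSegnoB}.

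Lemma \ref{LemmaSegnoB} was stated on $M$, so it must be rechecked in the rescaled setting. There the curvature contributions vanish in the limit: $R\circ\delta_{\mu_i}$ is multiplied by $\mu_i^2\to0$. The Pohozaev interior terms therefore tend to the nonnegative term coming from $\tau_i$, and the inequality persists.

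\textbf{Main obstacle.} I expect the main difficulty to be Step 4. It requires checking that the error terms in the Pohozaev identity (Proposition \ref{Pohozaev}) — the $\Delta_b-\cerchio{\Delta}_b$ difference and the $R$ terms — are $o(\xi_i(0)^{-2})$ after rescaling. I would try first to reuse the estimates of Lemma \ref{lem3.10}, with the extra factor $\mu_i\to0$ in front of every non-flat term.
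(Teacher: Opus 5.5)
Your Steps 1--3 and the computation in Step 4 follow the scheme the paper invokes: Li--Zhu's Proposition 4.1 via Afeltra's Proposition 4.15, closed by the sign restriction of Lemma \ref{LemmaSegnoB}. The conclusion $a=b>0$ with $\int_{\partial B_r}\ci{B}(x,h,\cerchio{\nabla}_bh)=-c\,ab<0$ is right. Note, though, that $\ci{B}$ does not vanish pointwise on $|x|^{-4}$ along Korányi spheres; only its integral vanishes, by scale invariance plus the Pohozaev identity on annuli.

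The gap is in how you transfer the sign restriction to $\xi_i$. You claim the curvature contributions to the Pohozaev interior terms vanish because $R\circ\delta_{\mu_i}$ carries a factor $\mu_i^2$. In dimension five this is false. The interior terms are multiplied by $\xi_i(0)^2$, and at homogeneous dimension $Q=6$ the curvature integral diverges logarithmically at the bubble scale. Concretely, with $R=O(|x|^2)$ in CR normal coordinates, the rescaled Webster curvature is $O(\mu_i^4|y|^2)$. Then $\xi_i(0)^2\int_{B_r}R_i\,\xi_i^2$ is of order $\mu_i^4|S(x_i)|^2\log\xi_i(0)$; this is the rescaled version of the $|S|^2\log M_i$ term in the lemma giving $S(\overline{x})=0$. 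Since $\xi_i(0)\approx\mu_i^2M_i$, nothing prevents $\mu_i\to0$ slowly. For example, $\mu_i\sim(\log M_i)^{-1/8}$ makes this term diverge. You also cannot invoke $S(\overline{x})=0$, since that lemma presupposes $\overline{x}$ isolated simple, which is exactly what you are proving. Your fallback plan fails for the same reason. Lemma \ref{lem3.10} for $n=2$ only gives $O(M_i^{-1+\delta})$, so after rescaling and multiplying by $\xi_i(0)^2$ you get roughly $\mu_i^2\xi_i(0)^{1+\delta}$. The factor $\mu_i^2$ does not control this.

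What saves the argument is sign, not smallness. The leading curvature contribution is nonnegative: this is the Jerison--Lee-type positivity $\int(\Xi U+2U)RU\ge0$ underlying Lemma \ref{LemmaSegnoB} and the $S(\overline{x})=0$ lemma. That positivity is structural, so it holds for each rescaled structure $\theta_i$. You therefore need Lemma \ref{LemmaSegnoB} in a form uniform over a family of CR structures converging to the Heisenberg one in CR normal coordinates. In that form, both the $\tau_i$-term and the curvature term are kept on the nonnegative side. The genuinely small errors are then controlled by the refined bubble estimates (the $M_i^{-3/2}$-type bounds on $v_i-U$ and its derivatives, redone for $\xi_i$), not by Lemma \ref{lem3.10}. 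This is what the paper's one-line appeal to Lemma \ref{LemmaSegnoB} implicitly uses. With it, $\liminf_{r\to0}\int_{\partial B_r}\ci{B}\ge0$ holds for $h$ and contradicts your value $-c\,ab<0$.

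A minor point: $b\ge0$ does not follow directly from $\xi_i>0$. Positivity only gives $\liminf_{|x|\to\infty}b\ge0$, and you then need the maximum principle for $\Delta_b$ before applying Liouville.
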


\begin{proof}
 The proof if the same of Proposition 4.15 in \cite{Afeltra} or of Proposition 4.1 in \cite{Li&Zhu}, using Lemma \ref{LemmaSegnoB}.
\end{proof}

\begin{lemma}\label{LemmaBlowUpSet}
 In the hypotheses of Theorem \ref{CompactnessTheorem}, the set of blow-up points is finite and it consists of isolated simple blow-up points.
\end{lemma}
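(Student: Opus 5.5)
The argument follows the standard Schoen/Li--Zhu/Marques scheme, adapted as in \cite{Afeltra}. It has three steps: a selection procedure giving a priori control of all large values of $u_i$ by bubbles centered at finitely many points; a proof that blow-up points are isolated, via distance control between bubble centers; and an application of the preceding lemma to conclude that isolated points are isolated simple.

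\textbf{Step 1: selection of bubble centers.} We first establish the analogue of \cite[Proposition 4.8]{Afeltra} (Schoen's selection). Given $R>0$ and $\epsilon>0$, there is $C=C(R,\epsilon)$ with the following property. For every $u\in\mathcal{M}_p$ with $\max u\ge C$ and $p$ close to $b_2-1=2$, there exist points $x_1,\dots,x_N$ such that:
\begin{itemize}
\item each $x_j$ is a local maximum of $u$;
\item the balls $B_{R u(x_j)^{-(p-1)/2}}(x_j)$ are pairwise disjoint;
\item in each such ball, $u$ is $\epsilon$-close in $\Gamma^{2}$, after rescaling, to a standard bubble $U\circ\delta_{\widetilde R(0)^{1/2}}$;
\item $u(x)\le C\,d(x,\{x_1,\dots,x_N\})^{-2/(p-1)}$ on $M$.
\end{itemize}
The proof is by contradiction and rescaling. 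If the bound fails, take a point maximizing $d(x,\cdot)^{2/(p-1)}u(x)$ outside the existing balls and blow up around it. The limit is a bounded positive solution of \eqref{3.2} on $\H^2$. By Theorem~\ref{thm3.2} (the $n=2$ case, where boundedness alone suffices), it is a left-translated bubble, so we may add a new center. The process terminates because each bubble carries a definite amount of $\int u^{p+1}$. That amount is bounded, since the energy of solutions is controlled by the positive CR Yamabe constant and H\"older's inequality, or alternatively because the balls are disjoint and the centers cannot accumulate without producing a contradiction at the next step.

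\textbf{Step 2: blow-up points are isolated.} Suppose $u_i$ blows up with $p_i\to 2$, and take the selected centers. Let $\sigma_i=\min_{j\ne k} d(x_{i,j},x_{i,k})$. We show that $\sigma_i$ stays bounded below. If $\sigma_i\to0$, pick a pair realizing $\sigma_i$ and rescale by $\sigma_i$, i.e.\ set $\widetilde u_i=\sigma_i^{2/(p_i-1)}u_i\circ\delta_{\sigma_i}\circ L_{x_{i,j}}$. The points $0$ and $\widetilde x_{i,k}$, with $d(0,\widetilde x_{i,k})=1$, are then isolated blow-up points of $\widetilde u_i$ on an almost-flat Heisenberg structure. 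By the preceding lemma they are isolated simple. By Proposition~\ref{LimiteBlowUp}, applied in the rescaled setting where the limiting operator is the flat $-b_2\Delta_b$ on $\H^2$, we get
\[
\widetilde u_i(0)\widetilde u_i\to a|x|^{-4}+b(x),
\]
with $b$ an $L$-harmonic function that is positive near $0$ because of the second singularity at $\widetilde x_{\infty}$. Then $b(0)>0$. The Pohozaev identity of Proposition~\ref{Pohozaev} in its flat form, where the error terms vanish in the limit by Lemma~\ref{lem3.10}, forces $\liminf_{r\to0}\mathcal B\ge 0$. But for $a|x|^{-4}+b$ with $b(0)>0$ the boundary term converges to a negative multiple of $a\,b(0)$, which is a contradiction. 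We should first rule out that one of the two points fails to be a genuine blow-up point in the rescaled picture. This is done, as in \cite{Li&Zhu}, using that $\widetilde u_i(\widetilde x_{i,k})\to\infty$ by the selection in Step 1. Hence all blow-up points are separated by a fixed distance, so there are finitely many, and the bound $u\le C d(x,\{x_j\})^{-2/(p-1)}$ from Step 1 makes each of them isolated.

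\textbf{Step 3: conclusion.} By the preceding lemma, isolated blow-up points are isolated simple, which proves the claim. The main obstacle is Step 2: one must verify that the sign computation of $\mathcal B$ for $a|x|^{-4}+b$ on the Heisenberg group gives $-c\,a\,b(0)$ with $c>0$. I would check this directly, using that $\Xi$ acts on the homogeneous part like $-4$ times the identity and that the $nu\nabla_b u\cdot\nu$ terms combine as in the Riemannian case, following \cite[Proposition 4.10]{Afeltra}.
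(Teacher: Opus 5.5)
Your outline follows the scheme the paper intends; its proof only refers to Li--Zhu and to the three-dimensional CR paper. However, the decisive step of Step 2 relies on a mechanism that does not work in dimension five. To read off $\liminf_{r\to0}\ci{B}\ge 0$ for the limit $h$ of $\widetilde u_i(0)\widetilde u_i$, you must multiply the Pohozaev identity for $\widetilde u_i$ by $\widetilde M_i^2$, where $\widetilde M_i=\widetilde u_i(0)$. Lemma \ref{lem3.10}, however, only bounds the error integral by $C\widetilde M_i^{-1+\delta+o(1)}$. The near-flatness of the rescaled structure gains a factor $O(\sigma_i^2)$. But $\sigma_i^2\widetilde M_i^{1+\delta}$, with $\widetilde M_i\approx\sigma_i^2M_i$, need not tend to zero when $\sigma_i\to0$ slowly compared with $M_i$; the same applies to $\widetilde M_i^2\int\widetilde R\,\widetilde u_i^2$. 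So the error terms do not simply vanish. What the paper provides instead is a \emph{sign}: Lemma \ref{LemmaSegnoB}, which rests on the refined estimates $|v_i-U|\lesssim M_i^{-3/2}(1+|y|)^{-1}$ (and their derivatives) and on the Jerison--Lee-type positivity of $\int(\Xi U+2U)RU$. You should apply that lemma to $\widetilde u_i$, with constants uniform along the rescaled structures, which converge to $\H^2$. You are in fact already using it implicitly when you invoke ``isolated implies isolated simple'' for $\widetilde u_i$. The computation you single out as the main obstacle is the routine part. For $h=a\rho^{-4}+b$, the $a^2$-terms integrate to zero on $\partial B_r$ by homogeneity. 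The only surviving cross term is $n\,a\,b(0)\int_{\partial B_r}\nabla_b\rho^{-4}\cdot\nu$, a fixed negative multiple of $a\,b(0)$, and everything else is $O(r)$.

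Two smaller points in Step 1 and at the start of Step 2 are also misjustified. Positivity of the CR Yamabe constant together with H\"older gives $\int_M u^{p+1}=\int_M uL_Ju\ge cY\|u\|_{L^{p+1}}^2$. That is a \emph{lower} bound on the energy, not an upper one, and no a priori energy bound is available (it would be a consequence of compactness). You do not need one: for fixed $u$, the disjoint balls $B_{Ru(x_j)^{-(p-1)/2}}(x_j)$ have radii at least $R(\max u)^{-(p-1)/2}$, so only finitely many fit in $M$. Here $N$ may depend on $u$; uniformity comes only from the separation in Step 2. Likewise, the selection does not give $\widetilde u_i(\widetilde x_{i,k})\to\infty$. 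Disjointness of the balls only gives $\widetilde u_i\ge R^{2/(p_i-1)}$ at the two centers, and divergence needs the usual Li--Zhu dichotomy. If some point elsewhere in the rescaled picture blows up, Lemma \ref{LemmaCSigma} and Harnack force $\widetilde u_i\to0$ away from the blow-up set, contradicting that lower bound. If nothing blows up, you get an entire limit on $\H^2$ close to two bubbles on disjoint balls, which must be excluded by the Liouville theorem.
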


\begin{proof}
 The proof is the same of \cite{Li&Zhu} or \cite{Afeltra}.
\end{proof}

\begin{prop}\label{GreenFunctionExpansion}
 Let $M$ a five-dimensional pseudoconvex CR manifold of positive CR Yamabe class, and let $p\in M$ such that $S(p)=0$ then the Green function $G_p$ of the conformal sublaplacian at $p$ in CR normal coordinates centered at $p$ verifies
 $$G_p(x) = \frac{a}{|x|^4} + A_p + O(|x|)$$
 where $A_p = b m_x$ for some constant $b>0$.
\end{prop}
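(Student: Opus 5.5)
The plan is to follow the construction of the Green function in CR normal coordinates from Jerison--Lee \cite{Jerison&Lee2}, as used for $n=1$ in \cite{CMY}, adapted to $n=2$ where $Q=6$ and the Heisenberg fundamental solution of $-b_2\Delta_b$ is $\frac{a}{|x|^4}$, with $|x|=(|z|^4+t^2)^{1/4}$ the Korányi norm. We fix CR normal coordinates centered at $p$, with $\theta$ normalized so that the Jerison--Lee expansions hold: the structure coincides with the Heisenberg one up to an error $O(|x|^2)$ in the coefficients, and $R=O(|x|^2)$ as already used above. We take a cutoff $\eta$ supported near $p$, set $\Gamma=\eta\,\frac{a}{|x|^4}$, and write $G_p=\Gamma+\beta$. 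Then
$$L_J\beta=\delta_p-L_J\Gamma=-(L_J-\overset{\circ}{L})\Gamma+(\text{smooth compactly supported error}),$$
where $\overset{\circ}{L}=-b_2\overset{\circ}{\Delta}_b$. The whole proof reduces to showing that this right-hand side is regular enough that $\beta$ is Folland--Stein $\Gamma^{1}$-type near $p$, hence $\beta(x)=\beta(p)+O(|x|)$. We then define $A_p=\beta(p)$.

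The first key step is the size estimate. Using Lemma \ref{SublaplacianoCoordinate} (the coordinate expression of $\Delta_b-\overset{\circ}{\Delta}_b$) applied to a homogeneous function of degree $-4$, the generic bound is only
$$|(L_J-\overset{\circ}{L})\Gamma|\lesssim |x|^{2}\,|x|^{-6}=|x|^{-4}.$$
Inverting $L_J$ on an error $O(|x|^{-4})$ produces a term $O(|x|^{-2})$, which is too singular. So the hypothesis $S(p)=0$ must be used to kill the leading terms.

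The second step uses $S(p)=0$, and I expect it to be the main obstacle. In Jerison--Lee normal coordinates, the quadratic part of the coefficient deviation of $\Delta_b$ from $\overset{\circ}{\Delta}_b$ is expressed through the Chern tensor at $p$, together with terms built from Ricci, torsion and $R$ that are removed by the choice of normal coordinates. We therefore expand $\Delta_b-\overset{\circ}{\Delta}_b$ by weighted homogeneity:
$$\Delta_b-\overset{\circ}{\Delta}_b=P_2+P_3+P_4+\dots,$$
where $P_k$ has coefficients homogeneous of degree $k$. Then $P_2\,|x|^{-4}$ is homogeneous of degree $-4$ with coefficient linear in $S(p)$, and it vanishes when $S(p)=0$. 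If some residual degree-2 terms survive, such as curvature contractions, I would check, as in \cite{Jerison&Lee2}, that they are annihilated on the radial function $|x|^{-4}$ by the $U(2)$-invariance of the Korányi norm. If that check fails, I would remove them by a further conformal normalization $\theta\mapsto(1+f)\theta$ with $f$ a polynomial of degree $2$.

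The remaining error then satisfies $|(L_J-\overset{\circ}{L})\Gamma|\lesssim|x|^{-3}$. Hence $\beta$ solves $L_J\beta=O(|x|^{-3})$, and convolution estimates against the kernel $|x-y|^{-4}$ in homogeneous dimension $6$ give $\beta=\beta(p)+O(|x|)$. The $\Gamma^{1}$ estimate of Folland--Stein type follows because $|x|^{-3}\in L^q$ for some $q>6$ fails, but a direct kernel computation shows that $\int |x-y|^{-4}|y|^{-3}dy$ differs from its value at $x=0$ by $O(|x|)$, up to a possible logarithm. To rule out such a logarithmic term, I would use a parity argument: the degree-3 part of the error is odd under $z\mapsto -z$, so its projection onto constants vanishes.

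The last step identifies $A_p$ with the $p$-mass. For this we compare with the definition of $m_x$ in \cite{CMY,Cheng&Chiu&Yang}, which blows up $(M\setminus\{p\},G_p^{2/n}\theta)$ to an asymptotically Heisenberg end via the inversion (Cayley-type) map. Using the expansion $G_p=\frac{a}{|x|^4}+A_p+O(|x|)$, the conformal factor in inverted coordinates has the form $1+\frac{A_p}{a}|y|^{-4}+O(|y|^{-5})$. The standard computation of the mass flux integral then yields $m_p=c\,A_p$ with $c>0$ universal, so that $A_p=b\,m_p$ with $b=c^{-1}>0$.
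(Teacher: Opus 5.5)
Your outline matches the paper's: split $G_p=\Gamma+\beta$ in Jerison--Lee normal coordinates, use $S(p)=0$ to improve $(L_J-\overset{\circ}{L})\Gamma$, then identify $\beta(p)$ with the mass as in Cheng--Chiu. The step where you invert $L_J$ fails, however.

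By the scaling you used yourself ($O(|x|^{-4})\mapsto O(|x|^{-2})$), a source of size $|x|^{-3}$ in homogeneous dimension $Q=6$ produces a correction of order $|x|^{-1}$. At $x=0$ the integral $\int|y|^{-4}|y|^{-3}\,dy\approx\int_0^1 r^{-7}r^{5}\,dr$ diverges. For $x\neq0$, each of the regions $|y|<|x|/2$, $|y|\approx|x|$ and $2|x|\le|y|\le 1$ contributes a quantity of order $|x|^{-1}$. So with only $|(L_J-\overset{\circ}{L})\Gamma|\lesssim|x|^{-3}$, the function $\beta$ is in general unbounded near $p$, and $A_p=\beta(p)$ is not even defined.

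The parity argument cannot repair this. It only rules out resonance with constants, i.e. a logarithm. An odd homogeneous source of degree $-3$ is inverted by $\overset{\circ}{L}$ to an odd homogeneous function of degree $-1$, which is just as singular.

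In terms of your decomposition $\Delta_b-\overset{\circ}{\Delta}_b=P_2+P_3+P_4+\dots$, the term $P_k|x|^{-4}$ is homogeneous of degree $k-6$. To reach $\beta=\beta(p)+O(|x|)$ you therefore need the error to be $O(|x|^{-1})$ or better:
\begin{itemize}
\item $P_2$, $P_3$ \emph{and} $P_4$ must all annihilate $|x|^{-4}$. Otherwise the degree $-2$ term $P_4|x|^{-4}$ already produces a logarithm or a non-constant homogeneous term of degree $0$.
\item $R\,\Gamma$ must also be of lower order. If you use only $R=O(|x|^2)$, it is $O(|x|^{-2})$.
\end{itemize}
Killing $P_2$ alone is far from sufficient.

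The paper's proof targets exactly this stronger error bound. The Jerison--Lee normalization gives $R_{\alpha\bar\beta}(p)=0$ (their Proposition 3.12). Hence $S(p)=0$ forces the \emph{whole} Tanaka--Webster curvature tensor $R_\beta{}^\alpha{}_{\lambda\bar\sigma}(p)$ to vanish, not just a leading term ``linear in $S(p)$'' with the hedges you leave open. Feeding this into the expansions of Jerison--Lee's Proposition 2.5 and redoing the computation of Lemma \ref{SublaplacianoCoordinate}, the paper asserts that $(\Delta_b-\overset{\circ}{\Delta}_b)|x|^{-4}$ is locally bounded near $p$. It then obtains $\beta(p)+O(|x|)$ by subelliptic regularity.

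Your proposal needs an argument of this strength in place of the claimed $O(|x|^{-3})$ bound. The final identification $A_p=b\,m_p$ via the inversion map and the flux integral is in line with the paper's appeal to Proposition 3.7 of Cheng--Chiu.
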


\begin{proof}
 Since in CR normal coordinates $R_{\alfa\overline{\beta}}(p)=0$ by Proposition 3.12 in \cite{Jerison&Lee2}, then $R_{\beta}{}^{\alfa}{}_{\lambda\overline{\sigma}}(p)=0$. Thanks to this and to Proposition 2.5 in \cite{Jerison&Lee2}, by repeating the proof of Lemma \ref{SublaplacianoCoordinate}, it can be checked that
 $$(\Delta_b-\cerchio{\Delta}_b)\frac{1}{|x|^4}$$
 is locally bounded around the origin, and therefore
 $$G_p(x) = \frac{a}{|x|^4} + A_p + O(|x|).$$

 The last assertion of the theorem is proved by following the proof of Proposition 3.7 in \cite{Cheng&Chiu}.
\end{proof}

\begin{proof}[Proof of Theorem \ref{CompactnessTheorem}]
 By standard arguments through elliptic theory, it suffices to show that the set is bounded in $C^0$.
 
 If $u_i$ violates this, by Lemma \ref{LemmaBlowUpSet} up to subsequences it has a blow-up set $S=\{\overline{x}^1,\ldots,\overline{x}^N\}$ formed by isolated simple blow-up points.
 Up to passage to subsequences we can suppose that $u_i(x^1_i)\le u_i(x^k_i)$ for every $k$, and calling $w_i=u_i(x^1_i)u_i$, by the previous results we can deduce that
 $$w_i(x) \to h(x) = \sum_{k=1}^Na_kG_{\overline{x}^k}(x).$$
 Thanks to the hypothesis and to Proposition \ref{GreenFunctionExpansion} we can get that in CR normal coordinates
 $$h(x) = \frac{c}{|x|^4} + A'$$
 where $A'>0$, but from this and Lemma \ref{LemmaSegnoB} we get a contradiction.
\end{proof}

\appendix
\section{}
Let $M$ be a $2n+1$-dimensional pseudoconvex pseudohermitian manifold and $x\in M$.

\begin{prop}
	In pseudohermitian normal coordinates
	$$\theta^{\alfa} = (1+O(|x|^2))\cerchio{\theta}^{\alfa} + O(|x|^2)\cerchio{\theta}^{\beta} + O(|x|^2)\cerchio{\theta}^{\con{\beta}} + O(|x|)\cerchio{\theta}$$
	$$\theta = (1+O(|x|^2))\cerchio{\theta} + O(|x|^3)\theta^{\beta} + O(|x|^3)\theta^{{\con{\beta}}}.$$
	\begin{equation}\label{omega}
		{\omega_1}^1= O(|x|)\cerchio{\theta}^{\beta} + O(|x|)\cerchio{\theta}^{\con{{\beta}}} + O(|x|)\cerchio{\theta}
	\end{equation}
\end{prop}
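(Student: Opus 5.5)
We will derive these expansions from the structure equations of the Tanaka–Webster connection together with the defining properties of pseudohermitian normal coordinates in \cite{Jerison&Lee2}. Recall that in these coordinates the frame $\{\theta,\theta^\alpha\}$ is obtained by parallel transport along the curves $s\mapsto \delta_s(x)$ (the "radial" curves generated by $\Xi$), starting from an admissible frame at the base point which agrees with $\{\cerchio{\theta},\cerchio{\theta}^\alpha\}$ there. Contracting the structure forms with $\Xi$ therefore gives
\[
\iota_\Xi\theta=\iota_\Xi\cerchio{\theta}=2t,\qquad \iota_\Xi\theta^\alpha=\iota_\Xi\cerchio{\theta}^\alpha=z^\alpha,\qquad \iota_\Xi\omega_\beta{}^\alpha=0 .
\]
This is the content of the normalization in Jerison–Lee's construction; see their Proposition 2.5 and the lemmas preceding it.

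The plan is to write $\theta=\cerchio{\theta}+\eta$, $\theta^\alpha=\cerchio{\theta}^\alpha+\eta^\alpha$, with $\iota_\Xi\eta=\iota_\Xi\eta^\alpha=0$. We then use the Cartan formula $\mathcal{L}_\Xi=\iota_\Xi d+d\iota_\Xi$ together with the structure equations
\[
d\theta=i h_{\alpha\bar\beta}\theta^\alpha\wedge\theta^{\bar\beta},\qquad d\theta^\alpha=\theta^\beta\wedge\omega_\beta{}^\alpha+\theta\wedge\tau^\alpha,\qquad d\omega_\beta{}^\alpha-\omega_\beta{}^\gamma\wedge\omega_\gamma{}^\alpha=R_\beta{}^\alpha{}_{\rho\bar\sigma}\theta^\rho\wedge\theta^{\bar\sigma}+\cdots
\]
Contracting these with $\Xi$ yields a linear ODE system along each radial curve:
\[
\mathcal{L}_\Xi\eta^\alpha=\iota_\Xi\bigl(\theta^\beta\wedge\omega_\beta{}^\alpha+\theta\wedge\tau^\alpha\bigr)-\iota_\Xi d\cerchio{\theta}^\alpha+d z^\alpha,
\]
and similarly for $\eta$ and for $\omega$. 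Here $\mathcal{L}_\Xi\omega_\beta{}^\alpha=\iota_\Xi(\text{curvature}+\text{torsion terms})$.

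The key step is a weighted-homogeneity bookkeeping argument. Assign $z$ weight $1$ and $t$ weight $2$, and give the forms $\cerchio{\theta}^\alpha$ and $\cerchio{\theta}$ weights $1$ and $2$ respectively. Since $\mathcal{L}_\Xi$ acts on a term of weighted degree $k$ by multiplication by $k$, integrating the ODE shows that a quantity whose $\mathcal{L}_\Xi$ is $O$ of a given order has Taylor coefficients vanishing below a corresponding order. The curvature and torsion terms are bounded, and $\iota_\Xi$ of a $2$-form in $\theta^\alpha,\theta^{\bar\beta}$ gains a factor $|x|$. This gives $\omega_\beta{}^\alpha=O(|x|)$ in every component, which is \eqref{omega}. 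Feeding this into the equation for $\eta^\alpha$, and using that $\cerchio{\theta}^\alpha=dz^\alpha$ is closed, gives $\mathcal{L}_\Xi\eta^\alpha=O(|x|^2)$ in the horizontal components and $O(|x|)$ in the $\cerchio{\theta}$-component. The extra gain of one order comes from the product $\theta^\beta\wedge\omega_\beta{}^\alpha$ contracted with $\Xi$.

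Next we integrate along $s\mapsto\delta_s x$. The component of $\eta^\alpha$ along $\cerchio{\theta}^\beta$ scales like $s$ times its coefficient, so the coefficient is $O(|x|^2)$. The component along $\cerchio{\theta}$ scales like $s^2$ times its coefficient, and the bound $O(|x|)$ for it comes from weighted counting. Finally, for $\theta$ we use $d\theta-d\cerchio{\theta}=i(h\,\theta\wedge\bar\theta-\cerchio{h}\,\cerchio{\theta}\wedge\bar{\cerchio{\theta}})$, which is $O(|x|^2)$ by the previous step. The same integration then gives $\theta=(1+O(|x|^2))\cerchio{\theta}+O(|x|^3)\theta^\beta+O(|x|^3)\theta^{\bar\beta}$, with one extra power because the horizontal components of a weight-$2$ form gain an additional factor $|x|$.

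The main obstacle is the precise weighted counting in the mixed components, namely the $O(|x|)\cerchio{\theta}$ term in $\theta^\alpha$ and the $O(|x|^3)$ horizontal terms in $\theta$. Here the torsion $\tau^\alpha$ and the $t$-direction have to be tracked carefully, since $|x|$ means the Korányi norm. I would first try to reduce this to the explicit recursive Taylor expansions in \cite[Section 2]{Jerison&Lee2}, checking weights term by term.
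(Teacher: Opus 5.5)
Your proposal is correct, but it takes a different route from the paper. The paper's entire proof is a citation of \cite[Proposition 2.5]{Jerison&Lee2}, which is your stated fallback. You instead re-derive that proposition from the structure equations in the radial gauge $\iota_\Xi\theta=2t$, $\iota_\Xi\theta^\alpha=z^\alpha$, $\iota_\Xi\omega_\beta{}^\alpha=0$. Your reduction is the right one. Since $\mathcal{L}_\Xi$ multiplies $\cerchio{\theta}^\beta,\cerchio{\theta}^{\con{\beta}}$ by $1$ and $\cerchio{\theta}$ by $2$, each coefficient solves either $\Xi a+a=G$ or $\Xi c+2c=G$. Hence $a(x)=\int_0^1G(\delta_sx)\,ds$ and $c(x)=\int_0^1 sG(\delta_sx)\,ds$, and both inherit any Kor\'anyi bound $|G|\lesssim|x|^k$. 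The citation is shorter, but it leaves the reader to translate Jerison--Lee's weighted-order notation into the $O(|x|^k)$ bounds used in the appendix. Your derivation is self-contained and shows where each order comes from. For example, the $O(|x|)\cerchio{\theta}$ term in $\omega$ comes from $\iota_\Xi(\theta^\rho\wedge\theta)=z^\rho\theta-2t\theta^\rho$ in the curvature form, and the quadratic term $\omega\wedge\omega$ drops out because $\iota_\Xi\omega=0$.

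Three steps need correcting when you write it out.

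First, the $dz^\alpha$ terms cancel, so the correct equation is $\mathcal{L}_\Xi\eta^\alpha=\iota_\Xi d\theta^\alpha=z^\beta\omega_\beta{}^\alpha+2t\tau^\alpha-(\iota_\Xi\tau^\alpha)\theta$. The extra $+dz^\alpha$ in your formula would contradict your own $O(|x|^2)$ claim.

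Second, the horizontal $O(|x|^2)$ bound does not come only from $\theta^\beta\wedge\omega_\beta{}^\alpha$. The torsion term $-(\iota_\Xi\tau^\alpha)\theta=-A^\alpha{}_{\bar\gamma}\bar z^\gamma\theta$ contributes $O(|x|)$ times the horizontal coefficients of $\theta-\cerchio{\theta}$. You therefore need the a priori fact that these coefficients are $O(|x|)$, which holds because they are smooth and vanish at the base point.

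Third, for $\theta$ the claim that $d\theta-d\cerchio{\theta}=O(|x|^2)$ is false componentwise. Its $\cerchio{\theta}\wedge\cerchio{\theta}^{\con{\beta}}$ part involves the $\cerchio{\theta}$-coefficient of $\eta^\alpha$, which is $-\tfrac13A^\alpha{}_{\bar\gamma}(0)\bar z^\gamma+\dots$, so it is only $O(|x|)$. The ``weight-$2$ form gains a factor'' heuristic is not a proof. What works is the identity $\mathcal{L}_\Xi(\theta-\cerchio{\theta})=ih_{\alpha\bar\beta}(z^\alpha\eta^{\bar\beta}-\bar z^\beta\eta^\alpha)$, which uses $\iota_\Xi\eta^\alpha=0$ and the fact that $h_{\alpha\bar\beta}$ is constant for the parallel frame. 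It gives horizontal coefficients of size $O(|x|)\cdot O(|x|^2)$ and a $\cerchio{\theta}$-coefficient of size $O(|x|)\cdot O(|x|)$. Integrating the two scalar equations then yields exactly the claimed $O(|x|^3)$ and $O(|x|^2)$.
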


\begin{proof}
	It follows from Proposition 2.5 in \cite{Jerison&Lee2}.
\end{proof}

\begin{lemma}\label{LemmaRelazioniPH}
	In pseudohermitian normal coordinates
	\begin{equation}\label{RelazioniCoordinatePH}
		\begin{cases}
			Z_{\alfa}= \cerchio{Z}_{\alfa} + O(|x|^2)\cerchio{Z}_{\beta} +O(|x|^2)\cerchio{Z}_{\con{\beta}}+ O(|x|^3)\cerchio{T}\\
			Z_{\con{\alfa}}= \cerchio{Z}_{\con{\alfa}} +  O(|x|^2)\cerchio{Z}_{\beta} +O(|x|^2)\cerchio{Z}_{\con{\beta}}+ O(|x|^3)\cerchio{T}\\
			T = O(|x|)\cerchio{Z}_{\beta} + O(|x|)\cerchio{Z}_{\con{\beta}} +(1+ O(|x|^2))\cerchio{T}
		\end{cases},
	\end{equation}
	and
	\begin{equation}\label{RelazioniInverseCoordinatePH}
		\begin{cases}
			\cerchio{Z}_{\alfa}= Z_{\alfa} + O(|x|^2)Z_{\beta} +O(|x|^2)Z_{\con{{\beta}}}+ O(|x|^3)T\\
			\cerchio{Z}_{\con{\alfa}} = Z_{\con{\alfa}} + O(|x|^2)Z_{\beta} + (1+ O(|x|^2))Z_{\con{\beta}} + O(|x|^3)T\\
			\cerchio{T} = O(|x|)Z_{\beta} + O(|x|)Z_{\con{\beta}} + (1+O(|x|^2))T
		\end{cases}
	\end{equation}
\end{lemma}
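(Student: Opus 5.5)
The plan is to obtain the first system \eqref{RelazioniCoordinatePH} by dualizing the coframe expansions in the preceding Proposition, and then to get \eqref{RelazioniInverseCoordinatePH} by inverting a near-identity matrix. The frame $\{Z_\alfa, Z_{\con\alfa}, T\}$ is dual to the admissible coframe $\{\theta^\alfa,\theta^{\con\alfa},\theta\}$. Likewise $\{\cerchio{Z}_\alfa,\cerchio{Z}_{\con\alfa},\cerchio{T}\}$ is dual to $\{\cerchio\theta^\alfa,\cerchio\theta^{\con\alfa},\cerchio\theta\}$.

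I write the coframe relations as $(\theta^\alfa,\theta^{\con\alfa},\theta)^{\top}=A\,(\cerchio\theta^\beta,\cerchio\theta^{\con\beta},\cerchio\theta)^{\top}$. Here $A$ is a $(2n+1)\times(2n+1)$ matrix whose blocks are read off from the Proposition:
\begin{itemize}
\item the $(\alfa,\beta)$ and $(\alfa,\con\beta)$ blocks are $\delta+O(|x|^2)$ and $O(|x|^2)$;
\item the $(\alfa,0)$ entries are $O(|x|)$;
\item the $\theta$-row is $(O(|x|^3),O(|x|^3),1+O(|x|^2))$.
\end{itemize}
For the $\theta$-row one first re-expresses the $\theta^\beta$ in the expansion of $\theta$ through $\cerchio\theta$'s. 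This changes nothing at the stated orders, since $O(|x|^3)\cdot O(|x|)$ is absorbed into the $\cerchio\theta$ coefficient. The dual frames are then related by the inverse transpose: $(Z,\con Z,T)=A^{-\top}(\cerchio Z,\con{\cerchio Z},\cerchio T)$. In the same way, the circled frame is obtained from the uncircled one by $A^{\top}$.

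This immediately gives \eqref{RelazioniInverseCoordinatePH}, because $\cerchio Z_\alfa=\sum_j A_{j\alfa}Z_j$ with $j$ running over $\beta,\con\beta,0$. Transposing moves the $O(|x|)$ entries of the $\theta^\alfa$-rows, namely the $\cerchio\theta$-coefficients, into the coefficients of $Z_\beta, Z_{\con\beta}$ in $\cerchio T$. The $O(|x|^3)$ entries of the $\theta$-row become the $T$-coefficients of $\cerchio Z_\alfa$. The $O(|x|^2)$ blocks stay $O(|x|^2)$.

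For \eqref{RelazioniCoordinatePH} I compute $A^{-1}$ by a Neumann series, $A=I+E$ and $A^{-1}=I-E+E^2-\cdots$, tracking orders blockwise. Assign weight $1$ to the $\alfa,\con\alfa$ indices and weight $2$ to the $0$ index. Then every entry of $E$ in position $(j,k)$ is $O(|x|^{\,w_k-w_j+2})$ when this exponent is positive, and $O(|x|^2)$ on the horizontal diagonal blocks. This weighted structure is preserved under products, because the weights telescope. Hence $E^m$ has the same form for every $m$, with entries bounded by powers of the entries of $E$ for $|x|$ small, and the series converges with the same orders.

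The step I expect to require care is confirming that no product term degrades an order. The concern is, for example, $E_{\alfa 0}E_{0\beta}=O(|x|)\,O(|x|^3)$ landing in a horizontal block, and the cross terms feeding the $T$ row. The weight bookkeeping handles this in one stroke: a weighted-homogeneity argument shows that any path through the matrix gains at least the claimed exponent. Transposing $A^{-1}$ then yields \eqref{RelazioniCoordinatePH}. In particular, the $O(|x|^3)$ $\cerchio T$-component of $Z_\alfa$ comes from the $\theta$-row, and the $O(|x|)$ horizontal components of $T$ come from the $(\alfa,0)$ entries.
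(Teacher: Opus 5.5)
Your proof is correct, and it reorganizes the paper's argument more economically. The paper first derives \eqref{RelazioniCoordinatePH} directly. It writes $Z_\alfa=a^\beta\cerchio{Z}_\beta+b^{\con\beta}\cerchio{Z}_{\con\beta}+c\,\cerchio{T}$ (and similarly for $T$), applies $\theta^\beta,\theta^{\con\beta},\theta$, and solves the resulting near-identity system by back-substitution: the $\theta$-equation gives $c=O(|x|^3)$ first, then $a$ and $b$. It then obtains \eqref{RelazioniInverseCoordinatePH} by inverting that matrix with the truncated expansion $(I+A)^{-1}=I-A+A^2+O(\|A\|^3)$. It computes $A^2$ blockwise and uses the fact that the uniform remainder $O(|x|^3)$ is no worse than any claimed order.

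You observe instead that \eqref{RelazioniInverseCoordinatePH} needs no inversion at all. Since $\cerchio{Z}_l=\sum_k\theta^k(\cerchio{Z}_l)Z_k$, it is just the transposed coframe matrix, so only one inversion remains. The paper's second step in effect recovers $A^{\top}$ from $A^{-\top}$. Your weighted Neumann series also controls every term of the series at once rather than truncating, so it would still work if sharper orders were needed. The cleanest phrasing: with $D=\mathrm{diag}(|x|^{w_j})$ one has $D^{-1}ED=O(|x|^2)$ entrywise, hence $D^{-1}(A^{-1}-I)D=O(|x|^2)$.

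One slip to correct. With $E_{jk}$ the coefficient of $\cerchio{\theta}^k$ in $\theta^j$ (row $j$, column $k$, as you set up $A$), the right bound is $E_{jk}=O(|x|^{w_j-w_k+2})$, not $O(|x|^{w_k-w_j+2})$. As written, your formula asserts $E_{\alfa 0}=O(|x|^3)$, which is false: these are the $O(|x|)$ coefficients of $\cerchio{\theta}$ in $\theta^\alfa$. It also gives only $E_{0\alfa}=O(|x|)$, which would lose the $O(|x|^3)$ bound on the $\cerchio{T}$-component of $Z_\alfa$. Your bullet list and your final sentence already use the correct pattern, and the exponents telescope under products in either orientation. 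So swapping the indices repairs the argument with no other change. Note also that $w_j-w_k+2\in\{1,2,3\}$ always, so the ``when this exponent is positive'' caveat is vacuous.
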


\begin{proof}
	Letting $Z_{\alfa}= a^{\beta}\cerchio{Z}_{\beta}+ b^{\overline{\beta}}\cerchio{Z}_{\overline{\beta}} + c\cerchio{T}$ and applying $\theta^{\beta}$, $\theta^{\overline{\beta}}$ and $\theta$, we get
	$$\begin{cases}
		\delta_{\alfa}^{\beta} = (\delta_{\alfa}^{\beta}+O(|x|^2))a^{\beta} + O(|x|^2)b^{\overline{\beta}} +O(|x|)c,\\
		\delta_{\overline{\alfa}}^{\overline{\beta}} = (1+O(|x|^2))b^{\overline{\beta}} + O(|x|^2)a^{\beta} + O(|x|)c,\\
		0= (1+O(|x|^2))c + O(|x|^3)a^{\overline{\beta}} + O(|x|^3)b^{\beta}
	\end{cases}$$
	respectively.
	The third one implies that $c=O(|x|^3)$, and using this in the other two allows to deduce that $a^{\beta}=\delta_{\alfa}^{\beta}+O(|x|^2)$ and $b^{\overline{\beta}}=O(|x|^2)$.
	Therefore
	$$Z_{\alfa}= \cerchio{Z}_{\alfa} +  O(|x|^2)\cerchio{Z}_{\beta} +O(|x|^2)\cerchio{Z}_{\con{\beta}}+ O(|x|^3)\cerchio{T}.$$
	Letting $T= d^{\beta}\cerchio{Z}_{\beta}+ e^{\con{\beta}}\cerchio{Z}_{{\con{\beta}}} + f\cerchio{T}$ and applying $\theta^{\beta}$, $\theta^{\overline{\beta}}$ and $\theta$, we get
	$$\begin{cases}
		0 = (1+O(|x|^2))d^{\beta} + O(|x|^2)e^{\con{\beta}} + O(|x|)f,\\
		0 = O(|x|^2)d^{\beta} + (1+O(|x|^2))e^{\con{\beta}} + O(|x|)f\\
		1 = (1+O(|x|^2))f + O(|x|^3)d^{\beta} + O(|x|^3)e^{\con{\beta}}.
	\end{cases}$$
	Arguing as before,
	these imply that $d^{\beta}=O(|x|)$, $e^{\con{\beta}}=O(|x|)$ and $f= 1+ O(|x|^2)$.
	Therefore
	$$T = O(|x|)\cerchio{Z}_{\beta} + O(|x|)\cerchio{Z}_{\con{\beta}} +(1+ O(|x|^2))\cerchio{T}.$$
	So we proved the first part of the Lemma.
	
	We can write the formulas we proved as
	\begin{equation*}
		\left(
		\begin{array}{c}
			Z_{\alfa}\\
			Z_{\con{\alfa}}\\
			T
		\end{array}
		\right)
		=
		\left(
		I+
		\left(
		\begin{array}{ccc}
			O(|x|^2) & O(|x|^2) & O(|x|^3) \\
			O(|x|^2) & O(|x|^2) & O(|x|^3) \\
			O(|x|) & O(|x|) & O(|x|^2)
		\end{array}
		\right)
		\right)
		\left(
		\begin{array}{c}
			\cerchio{Z}_{\alfa}\\
			\cerchio{Z}_{\con{\alfa}}\\
			\cerchio{T}
		\end{array}
		\right).
	\end{equation*}
	Applying the Taylor expansion for the matrix inverse $(I+A)^{-1}=I-A+A^2 +O(\N{A}^3)$, we get
\begin{equation*}
\begin{split}
		\left(
		\begin{array}{c}
			\cerchio{Z}_{\alfa}\\
			\cerchio{Z}_{\con{\alfa}}\\
			\cerchio{T}
		\end{array}
		\right)
		=
		\left(
		I+
		\left(
		\begin{array}{ccc}
			O(|x|^2) & O(|x|^2) & O(|x|^3) \\
			O(|x|^2) & O(|x|^2) & O(|x|^3) \\
			O(|x|) & O(|x|) & O(|x|^2)
		\end{array}
		\right)
		+\right.
        \\
		+\left.
		\left(
		\begin{array}{ccc}
			O(|x|^4) & O(|x|^4) & O(|x|^5) \\
			O(|x|^4) & O(|x|^4) & O(|x|^5) \\
			O(|x|^3) & O(|x|^3) & O(|x|^4)
		\end{array}
		\right)
		+O(|x|^3)
		\right)
		\left(
		\begin{array}{c}
			Z_{\alfa}\\
			Z_{\con{\alfa}}\\
			T
		\end{array}
		\right)=
        \\
		=
		\left(
		I+
		\left(
		\begin{array}{ccc}
			O(|x|^2) & O(|x|^2) & O(|x|^3) \\
			O(|x|^2) & O(|x|^2) & O(|x|^3) \\
			O(|x|) & O(|x|) & O(|x|^2)
		\end{array}
		\right)
		\right)
		\left(
		\begin{array}{c}
			Z_{\alfa}\\
			Z_{\con{\alfa}}\\
			T
		\end{array}
		\right).
        \end{split}
	\end{equation*}
	This implies the second part of the thesis.
\end{proof}

\begin{lemma}
	In pseudohermitian normal coordinates
	\begin{equation}\label{RelazioniInverseCoordinatePH2}
		\begin{cases}
			\cerchio{Z}_{\alfa}^2 = Z_{\alfa}^2 + O(|x|)Z_{\beta} +O(|x|)Z_{\con{{\beta}}}+ O(|x|^2)T + O(|x|^2)Z_{\alfa}Z_{\beta} + O(|x|^2)Z_{\alfa}Z_{\con{{\beta}}} + \\
			+ O(|x|^3)Z_{\alfa}T + O(|x|^4)Z_{\beta}Z_{\gamma} + O(|x|^4)Z_{\beta}Z_{\con{\gamma}} + O(|x|^4)Z_{\con{\beta}}Z_{\con{\gamma}} + \\
			+ O(|x|^5)Z_{\beta}T + O(|x|^5)Z_{\con{{\beta}}}T + O(|x|^6)T^2 \\
			
			\cerchio{Z}_{\alfa}\cerchio{Z}_{\beta} = Z_{\alfa}Z_{\beta} + O(|x|)Z_{\gamma} +O(|x|)Z_{\con{{\gamma}}}+ O(|x|^2)T +\\
			+ O(|x|^2)Z_{\alfa}Z_{\gamma} + O(|x|^2)Z_{\alfa}Z_{\con{{\gamma}}} + O(|x|^3)Z_{\alfa}T + \\
			+ O(|x|^2)Z_{\gamma}Z_{\beta} + O(|x|^2)Z_{\con{{\gamma}}}Z_{\beta} + O(|x|^3)TZ_{\beta} + \\
			+ O(|x|^4)Z_{\gamma}Z_{\mi} + O(|x|^4)Z_{\gamma}Z_{\con{\mi}} + O(|x|^4)Z_{\con{\gamma}}Z_{\con{\mi}} + \\
			+ O(|x|^5)Z_{\gamma}T + O(|x|^5)Z_{\con{{\gamma}}}T + O(|x|^6)T^2 \\
			
			\cerchio{Z}_{\alfa}\cerchio{Z}_{\con{\beta}} = Z_{\alfa}Z_{\con{\beta}} + O(|x|)Z_{\gamma} +O(|x|)Z_{\con{{\gamma}}}+ O(|x|^2)T +\\
			+ O(|x|^2)Z_{\alfa}Z_{\gamma} + O(|x|^2)Z_{\alfa}Z_{\con{{\gamma}}} + O(|x|^3)Z_{\alfa}T + \\
			+ O(|x|^2)Z_{\gamma}Z_{\con{\beta}} + O(|x|^2)Z_{\con{{\gamma}}}Z_{\con{\beta}} + O(|x|^3)TZ_{\con{\beta}} + \\
			+ O(|x|^4)Z_{\gamma}Z_{\mi} + O(|x|^4)Z_{\gamma}Z_{\con{\mi}} + O(|x|^4)Z_{\con{\gamma}}Z_{\con{\mi}} + \\
			+ O(|x|^5)Z_{\gamma}T + O(|x|^5)Z_{\con{{\gamma}}}T + O(|x|^6)T^2 \\
			
			\cerchio{Z}_{\con{\alfa}}\cerchio{Z}_{\con{\beta}} = Z_{\con{\alfa}}Z_{\con{\beta}} + O(|x|)Z_{\gamma} +O(|x|)Z_{\con{{\gamma}}}+ O(|x|^2)T +\\
			+ O(|x|^2)Z_{\con{\alfa}}Z_{\gamma} + O(|x|^2)Z_{\con{\alfa}}Z_{\con{{\gamma}}} + O(|x|^3)Z_{\con{\alfa}}T + \\
			+ O(|x|^2)Z_{\gamma}Z_{\con{\beta}} + O(|x|^2)Z_{\con{{\gamma}}}Z_{\con{\beta}} + O(|x|^3)TZ_{\con{\beta}} + \\
			+ O(|x|^4)Z_{\gamma}Z_{\mi} + O(|x|^4)Z_{\gamma}Z_{\con{\mi}} + O(|x|^4)Z_{\con{\gamma}}Z_{\con{\mi}} + \\
			+ O(|x|^5)Z_{\gamma}T + O(|x|^5)Z_{\con{{\gamma}}}T + O(|x|^6)T^2 \\
			
			\cerchio{Z}_{\alfa}\cerchio{T} = Z_{\alfa}T + O(1)Z_{\beta} + O(1)Z_{\con{\beta}} +O(|x|)T+\\
			+O(|x|)Z_{\alfa}Z_{\beta} + O(|x|)Z_{\alfa}Z_{\con{\beta}} + O(|x|^2)Z_{\beta}T + O(|x|^2)Z_{\con{\beta}}T + \\
			+O(|x|^3)Z_{\beta}Z_{\gamma} + O(|x|^3)Z_{\beta}Z_{\con{\gamma}} + O(|x|^3)Z_{\con{\beta}}Z_{\con{\gamma}} + O(|x|^3)T^2\\
			
			\cerchio{T}^2 = (1+O(|x|^2))T^2+ O(1)Z_{\beta} + O(1)Z_{\con{\beta}} + O(|x|)T + O(|x|)Z_{\beta}T +\\
			+O(|x|)Z_{\con{\beta}}T +O(|x|^2)Z_{\beta}Z_{\gamma} + O(|x|^2)Z_{\beta}Z_{\con{\gamma}} + O(|x|^2)Z_{\con{\beta}}Z_{\con{\gamma}} \\
		\end{cases}
	\end{equation}
\end{lemma}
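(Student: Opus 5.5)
The plan is to derive each formula in \eqref{RelazioniInverseCoordinatePH2} by composing the first-order relations \eqref{RelazioniInverseCoordinatePH} with themselves. I write each circled field as a vector field in the non-circled frame,
\[
\cerchio{Z}_{\alfa}=Z_{\alfa}+a_{\alfa}^{\beta}Z_{\beta}+b_{\alfa}^{\con\beta}Z_{\con\beta}+c_{\alpha}T,\qquad
\cerchio{T}=d^{\beta}Z_{\beta}+e^{\con\beta}Z_{\con\beta}+(1+f)T.
\]
The coefficients have sizes $a,b=O(|x|^2)$, $c=O(|x|^3)$, $d,e=O(|x|)$ and $f=O(|x|^2)$, as in Lemma \ref{LemmaRelazioniPH}. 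For a product such as $\cerchio{Z}_{\alfa}\cerchio{Z}_{\beta}$, I expand
\[
\cerchio{Z}_{\alfa}\cerchio{Z}_{\beta}u=\cerchio{Z}_{\alfa}\bigl(Z_\beta u+a Z u+b\con Z u+cTu\bigr)
\]
by the Leibniz rule. This produces two kinds of terms. The first kind are second-order terms: products of the coefficient of $\cerchio{Z}_\alfa$ with the coefficient of $\cerchio{Z}_\beta$, multiplying a composition of two frame fields. The second kind are first-order terms: the coefficient of $\cerchio{Z}_\alfa$ times the frame-field derivative of a coefficient of $\cerchio{Z}_\beta$, multiplying a single frame field.

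For the second-order terms, the orders simply multiply. The leading term gives $Z_\alfa Z_\beta$. The cross terms $a Z_\alfa Z_\gamma$, $Z_\gamma Z_\beta$ and so on have coefficients $O(|x|^2)$, and those involving $T$ have coefficients $O(|x|^3)$. The quadratic terms in the perturbations give $O(|x|^4)$ for $ZZ$, $O(|x|^5)$ for $ZT$ and $O(|x|^6)$ for $T^2$. This is exactly the list in the statement. Reordering, for instance $TZ$ versus $ZT$, only produces commutators $[Z,T]$. These are first order with $O(1)$ coefficients, and after multiplication by the small prefactors they are absorbed into the lower-order terms.

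For the first-order terms, I need derivative bounds on the coefficients. I will use that, in pseudohermitian normal coordinates, the coefficients are smooth functions with the stated homogeneous vanishing orders; this comes from Proposition 2.5 in \cite{Jerison&Lee2}, via the Taylor expansion with respect to the dilation-adapted weights. Then a horizontal derivative $Z$ of an $O(|x|^k)$ coefficient is $O(|x|^{k-1})$, and a $T$-derivative is $O(|x|^{k-2})$, since $T$ has homogeneous degree $2$. This gives $Z_\alfa(a)=O(|x|)$ and $Z_\alfa(c)=O(|x|^2)$, which produces the terms $O(|x|)Z_\gamma$, $O(|x|)Z_{\con\gamma}$ and $O(|x|^2)T$. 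For the mixed product $\cerchio{Z}_\alfa\cerchio{T}$, the derivative $Z_\alfa(d)$ is $O(1)$, which explains the $O(1)Z_\beta$ term. For $\cerchio{T}^2$, the derivative $T(d)$ is formally $O(|x|^{-1})$ but multiplies $Z$, so I will need the finer fact that the $O(|x|)$ coefficients of $\cerchio{T}$ are linear in $z$ to leading order; then $T$ of them is $O(1)$ (in fact $O(|x|)$), matching the claimed $O(1)Z_\beta$ term. The remaining products come out with the stated weights. For example, in $\cerchio{Z}_\alfa\cerchio{T}$ the product of the $O(|x|)$ coefficient of $\cerchio T$ with the leading $Z_\alpha$ gives $O(|x|)Z_\alfa Z_\beta$, and the $c\,T\cdot T$ term gives $O(|x|^3)T^2$.

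The main obstacle is justifying these derivative bounds, that differentiating an $O(|x|^k)$ coefficient costs exactly one or two homogeneous orders. A crude bound would lose a factor in the $T$-derivatives. I will handle this by going back to the explicit expansions of $\theta$, $\theta^\alfa$ and $\omega_1{}^1$ in \cite{Jerison&Lee2}. There the remainders are smooth in the normal coordinates and vanish to weighted order $k$, so their Taylor polynomials are homogeneous of degree at least $k$ and differentiate as claimed. Inverting the frame matrix as in Lemma \ref{LemmaRelazioniPH} preserves this structure, because the inverse is a convergent Neumann series in such functions.
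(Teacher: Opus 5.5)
Your strategy is the paper's own. Its proof is just ``Lemma~\ref{LemmaRelazioniPH} and some computations'', i.e.\ composing the first-order relations. Your Leibniz bookkeeping with weighted Taylor orders reproduces the first five lines correctly: a horizontal derivative costs one order, $T$ costs two, and reorderings only produce first-order commutator terms with small prefactors.

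\textbf{The gap: the first-order $T$ term in the $\cerchio{T}^2$ line.} Expand $\cerchio{T}^2u=\cerchio{T}\bigl(d^\beta Z_\beta u+e^{\con{\beta}}Z_{\con{\beta}}u+(1+f)Tu\bigr)$. The coefficient of $Tu$ is $\cerchio{T}(f)=\partial_t f$, plus contributions that are already $O(|x|)$ (e.g.\ from reordering the $O(|x|^2)\,Z_{\con{\gamma}}Z_\beta$ terms). Your rule only gives $\partial_t f=O(1)$. Indeed, $f$ is smooth with weighted vanishing order two, so its degree-two part may contain a monomial $\kappa t$, and then $\partial_t f=\kappa+O(|x|)$. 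So your argument yields $O(1)\,T$, not the stated $O(|x|)\,T$. The sentence ``the remaining products come out with the stated weights'' is false exactly here. Similarly, your parenthetical claim $T(d)=O(|x|)$ does not follow from $d$ being linear in $z$ to leading order. It is not needed, so drop it.

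\textbf{The missing input.} You need a structural property of the normal coordinates, not just vanishing orders. In Jerison--Lee coordinates the curves $s\mapsto\delta_s(z,t)$ are the parabolic geodesics, with $\dot\gamma(0)$ horizontal and $\nabla_{\dot\gamma}\dot\gamma=2tT$. Since $\nabla\theta=0$, this gives $\theta(\dot\gamma(s))=2ts$, i.e.\ $\theta(\Xi)=2t$.

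Now expand $\theta$ in the Heisenberg coframe. Its $\cerchio{\theta}$-component is $\theta(\cerchio{T})=1+f$, your coefficient, so
$\theta=(1+f)\cerchio{\theta}+g_\beta\cerchio{\theta}^{\beta}+\con{g_\beta}\,\cerchio{\theta}^{\con{\beta}}$ with $g_\beta$ smooth. Evaluate on $\Xi$, using $\cerchio{\theta}(\Xi)=2t$ and $\cerchio{\theta}^{\beta}(\Xi)=z^\beta$. This gives $2tf+g_\beta z^\beta+\con{g_\beta}\,\con{z}^\beta=0$.

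Setting $z=0$ gives $f(0,t)\equiv 0$, hence $\partial_t f(0,t)\equiv 0$. By smoothness of $\partial_t f$, it follows that $\partial_t f=O(|z|)=O(|x|)$. With this added, the last line holds as stated and your proof is complete. The paper's ``some computations'' does not spell this point out either.
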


\begin{proof}
	It follows from Lemma \ref{LemmaRelazioniPH} and some computations.
\end{proof}

\begin{lemma}\label{StimeDerivateCoordinateCR}
	For any function $f$ in CR normal coordinates around $\overline{x}$
	$$|f_{,\alfa}- \cerchio{Z}_{\alfa}f|\lesssim (|f_{,\beta}|+f_{,\con{\beta}}|)|x|^2 + |f_{,0}||x|^3,$$
	
	$$|f_{,0}- \cerchio{T}f|\lesssim (|f_{,\beta}|+f_{,\con{\beta}}|)|x| + |f_{,0}||x|^2,$$
	
	$$|f_{,\alfa\beta}- \cerchio{Z}_{\alfa}\cerchio{Z}_{\beta}f|\lesssim (|f_{,\gamma}|+|f_{,\con{\gamma}}|)|x| + (|f_{,0}|+|f_{,\alfa\gamma}|+|f_{,\alfa\con{\gamma}}|+|f_{,\gamma\beta}|+|f_{,\con{\gamma}\beta}|)|x|^2 + (|f_{,\alfa 0}|+f_{,0\beta})|x|^3+$$
	$$+(|f_{,\gamma\mi}|+|f_{,\gamma\con{\mi}}|+|f_{,\con{\gamma}\con{\mi}}|)|x|^4 +(|f_{,0\gamma}|+|f_{,0\con{\gamma}}|)|x|^5 + |f_{,00}||x|^6,$$
	
	$$|f_{,\alfa\con{\beta}}- \cerchio{Z}_{\alfa}\cerchio{Z}_{\con{\beta}}f|\lesssim (|f_{,\gamma}|+|f_{,\con{\gamma}}|)|x| + (|f_{,0}|+|f_{,\alfa\gamma}|+|f_{,\alfa\con{\gamma}}|+|f_{,\gamma\con{\beta}}|+|f_{,\con{\gamma}\con{\beta}}|)|x|^2 + (|f_{,\alfa 0}|+f_{,0\con{\beta}})|x|^3+$$
	$$+(|f_{,\gamma\mi}|+|f_{,\gamma\con{\mi}}|+|f_{,\con{\gamma}\con{\mi}}|)|x|^4 +(|f_{,0\gamma}|+|f_{,0\con{\gamma}}|)|x|^5 + |f_{,00}||x|^6,$$

	$$|f_{,0\alfa}- \cerchio{Z}_{\alfa}\cerchio{T}f|\lesssim |f_{,\beta}|+|f_{,\con{\beta}}| + (|f_{,0}|+|f_{,\alfa\beta}|+|f_{,\alfa\con{\beta}}|)|x| +(|f_{,0\beta}| +|f_{,0\con{\beta}}|)|x|^2 +$$
	$$+ (|f_{,00}|+ |f_{,\beta\gamma}|+|f_{,\beta\con{\gamma}}|+|f_{,\con{\beta}\con{\gamma}}|)|x|^3,$$

	$$|f_{,00}- \cerchio{T}^2f|\lesssim |f_{,\beta}|+|f_{,\con{\beta}}| + (|f_{,0}|+|f_{,0\beta}|+|f_{,0\con{\beta}}|)|x| + (|f_{,\beta\gamma}|+|f_{,\beta\con{\gamma}}| + |f_{,\con{\beta}\con{\gamma}}| +|f_{,00}|)|x|^2.$$\\
\end{lemma}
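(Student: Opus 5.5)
The plan is to convert each covariant derivative into coordinate derivatives in the frame $\{Z_\beta,Z_{\con\beta},T\}$ and then substitute the inverse relations \eqref{RelazioniInverseCoordinatePH} and \eqref{RelazioniInverseCoordinatePH2}.

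\textbf{First-order estimates.} These follow directly from Lemma \ref{LemmaRelazioniPH}. By definition $f_{,\alfa}=Z_\alfa f$ and $f_{,0}=Tf$. Applying the first and third lines of \eqref{RelazioniInverseCoordinatePH} to $f$ gives
$$\cerchio{Z}_\alfa f - f_{,\alfa} = O(|x|^2)(f_{,\beta}+f_{,\con\beta})+O(|x|^3)f_{,0}$$
and
$$\cerchio{T}f-f_{,0}=O(|x|)(f_{,\beta}+f_{,\con\beta})+O(|x|^2)f_{,0}.$$
These are exactly the first two claimed bounds.

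\textbf{Second-order estimates.} Here there are two sources of error.

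\emph{Connection terms.} Write each second covariant derivative as a coordinate derivative plus connection terms:
$$f_{,\alfa\beta}=Z_\beta Z_\alfa f-\omega_\alfa{}^\gamma(Z_\beta)f_{,\gamma},$$
with analogous formulas for $f_{,\alfa\con\beta}$, $f_{,0\alfa}$ and $f_{,00}$. For the last two there is no connection term on the $T$-slot, since $\nabla T=0$ for the Tanaka--Webster connection. By \eqref{omega} (and its analogue for general indices), the connection forms are $O(|x|)$ in the frame. So these terms contribute at most $O(|x|)(|f_{,\gamma}|+|f_{,\con\gamma}|)$, which is the leading error in the $\alfa\beta$ and $\alfa\con\beta$ estimates. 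We also need to switch between $Z_\beta Z_\alfa$ and $Z_\alfa Z_\beta$ and between $Z_\alfa T$ and $TZ_\alfa$. The commutators $[Z_\alfa,Z_\beta]$, $[Z_\alfa,Z_{\con\beta}]=-ih_{\alfa\con\beta}T+\dots$ and $[Z_\alfa,T]$ are expressed through torsion and connection. Their contribution is again first order in $f$, with coefficients that are $O(1)$ or $O(|x|)$.

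\emph{Substitution of the second-order relations.} Insert \eqref{RelazioniInverseCoordinatePH2} for $\cerchio{Z}_\alfa\cerchio{Z}_\beta$, $\cerchio{Z}_\alfa\cerchio{Z}_{\con\beta}$, $\cerchio{Z}_\alfa\cerchio{T}$ and $\cerchio{T}^2$. Each product $Z_\mu Z_\nu f$ or $Z_\mu Tf$ appearing on the right-hand side is replaced by the corresponding covariant second derivative. The error of this replacement is $O(1)$ times first derivatives, and it multiplies coefficients that already carry powers of $|x|$. We then collect terms by order in $|x|$.

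For the $\cerchio{Z}_\alfa\cerchio{T}$ and $\cerchio{T}^2$ cases, the $O(1)Z_\beta$ and $O(1)Z_{\con\beta}$ terms in \eqref{RelazioniInverseCoordinatePH2} produce the zeroth-order-in-$|x|$ term $|f_{,\beta}|+|f_{,\con\beta}|$. There is also the extra $O(|x|^2)\,f_{,00}$ term coming from the coefficient $1+O(|x|^2)$.

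\textbf{Main obstacle.} The difficulty is bookkeeping rather than ideas. One must check that every commutator or connection correction multiplies a coefficient with at least the claimed power of $|x|$. For instance, in $\cerchio{Z}_\alfa\cerchio{Z}_\beta$ the $O(|x|^4)Z_\gamma Z_\mu$ terms generate first-order corrections with weight $O(|x|^4)$, and these are absorbed into the $O(|x|)$ first-order term. Likewise, $O(|x|^6)T^2$ becomes $|x|^6|f_{,00}|$ plus lower-order terms.

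I would organize this by assigning each frame vector its Heisenberg weight ($1$ for $Z$, $2$ for $T$) and checking homogeneity term by term. The lemma is then the straightforward consequence, using the triangle inequality.
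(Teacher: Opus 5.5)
Your overall route is the paper's own: the first-order bounds come from \eqref{RelazioniInverseCoordinatePH}, and the second-order ones from \eqref{RelazioniInverseCoordinatePH2} together with the connection estimate \eqref{omega}. The first two estimates, and the $\alfa\beta$, $0\alfa$ and $00$ estimates, go through as you describe. The gap is in the $\alfa\con{\beta}$ estimate, and it comes from the index convention you fix.

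You take $f_{,\alfa\beta}=Z_\beta Z_\alfa f-\omega_\alfa{}^\gamma(Z_\beta)f_{,\gamma}$, and hence $f_{,\alfa\con\beta}=Z_{\con\beta}Z_\alfa f-\omega_\alfa{}^\gamma(Z_{\con\beta})f_{,\gamma}$. But \eqref{RelazioniInverseCoordinatePH2} expands $\cerchio{Z}_\alfa\cerchio{Z}_{\con\beta}$ around $Z_\alfa Z_{\con\beta}$, so you must commute. The commutator you write yourself, $[Z_\alfa,Z_{\con\beta}]=-ih_{\alfa\con\beta}T+O(|x|)Z_\gamma+O(|x|)Z_{\con\gamma}$, leaves the error $ih_{\alfa\con\beta}f_{,0}$ with a coefficient of order one. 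The right-hand side of the fourth inequality contains only $|x|^2|f_{,0}|$ and $|x|(|f_{,\gamma}|+|f_{,\con\gamma}|)$. So your remark that commutator contributions carry coefficients $O(1)$ or $O(|x|)$ does not suffice: an $O(1)$ coefficient is admissible only in the $0\alfa$ and $00$ estimates. Your own weight count flags this, since $T$ has the same weight $2$ as $Z_\alfa Z_{\con\beta}$, so this term is not lower order.

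In fact, with your convention the inequality is false. At the centre $x=0$ every term on the right vanishes. The left side equals $|h_{\alfa\con\beta}f_{,0}(0)|$, because the horizontal connection coefficients vanish there by \eqref{omega} and $\cerchio{Z}_\alfa\cerchio{Z}_{\con\beta}=Z_\alfa Z_{\con\beta}$ at $0$. For instance, $f=t$ on the flat $\H^n$ gives $Z_{\con\alfa}Z_\alfa t-Z_\alfa Z_{\con\alfa}t=2i$ identically, while the right side is $O(|x|^2)$.

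The repair is a matter of convention, and it is what the paper's one-line proof tacitly uses. Read the indices in the order in which the operators act in the comparison term: $f_{,\alfa\con\beta}=Z_\alfa Z_{\con\beta}f-\omega_{\con\beta}{}^{\con\gamma}(Z_\alfa)f_{,\con\gamma}$ and $f_{,\alfa\beta}=Z_\alfa Z_\beta f-\omega_\beta{}^\gamma(Z_\alfa)f_{,\gamma}$. Then no $Z$--$\con{Z}$ commutation is needed. Only connection terms, which are $O(|x|)$ by \eqref{omega}, and the coefficient terms of \eqref{RelazioniInverseCoordinatePH2} appear. The one remaining swap, $TZ_\alfa$ versus $Z_\alfa T$ in the $0\alfa$ estimate, costs only torsion and connection terms with $O(1)$ coefficients on horizontal first derivatives, which that estimate allows.

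Alternatively, keep your convention and state the fourth estimate for $\cerchio{Z}_{\con\beta}\cerchio{Z}_\alfa f$ instead. Lemma \ref{SublaplacianoCoordinate} uses only the sum $f_{,\alfa\con\alfa}+f_{,\con\alfa\alfa}$, and there the two $T$-contributions cancel, so nothing downstream is affected.
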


\begin{proof}
	The first two estimates follow from formulas \eqref{RelazioniInverseCoordinatePH} and \eqref{omega}. The other ones from formulas \eqref{RelazioniInverseCoordinatePH2} and \eqref{omega}.
\end{proof}

\begin{lemma}\label{SublaplacianoCoordinate}
	In pseudohermitian normal coordinates around a point $x$
	$$\Delta_bf = \cerchio{\Delta}_bf + O(|x|)(|\cerchio{Z}_{\beta}f|+|\cerchio{Z}_{\con{\beta}}f|)+$$
    $$ + O(|x|^2)(|Tf|+|\cerchio{Z}_{\beta}\cerchio{Z}_{\alfa}f|+|\cerchio{Z}_{\con{\beta}}\cerchio{Z}_{\alfa}f|+|\cerchio{Z}_{,\con{\alfa}}\cerchio{Z}_{,\beta}f|+|\cerchio{Z}_{\con{\alfa}}\cerchio{Z}_{\con{\beta}}f|) +$$
    $$+ O(|x|^3)(|\cerchio{Z}_{\alfa}Tf|+\cerchio{Z}_{\con{\alfa}}Tf) + O(|x|^4)(|\cerchio{Z}_{\gamma}\cerchio{Z}_{\beta}f|+|\cerchio{Z}_{\con{\gamma}}\cerchio{Z}_{\beta}f|+|\cerchio{Z}_{\con{\gamma}}\cerchio{Z}_{\con{\beta}}f|)+ $$
    $$+O(|x|^5)(|\cerchio{Z}_{\beta}Tf|+|\cerchio{Z}_{\con{\beta}}Tf|) + O(|x|^6)|T^2f|$$
\end{lemma}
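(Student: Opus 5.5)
The plan is to write the sublaplacian in a frame adapted to the normal coordinates and compare it term by term with the flat one, using the change-of-frame formulas \eqref{RelazioniCoordinatePH} together with the connection estimate \eqref{omega}.

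\textbf{Step 1 (invariant form of $\Delta_b$).} By definition,
$$\Delta_bf = f_{,\alfa\con{\alfa}}+f_{,\con{\alfa}\alfa},$$
computed in the frame $Z_\alfa$ that is $h$-orthonormal at each point. The covariant second derivatives are
$$f_{,\alfa\con{\beta}} = Z_{\con{\beta}}Z_\alfa f - \omega_\alfa{}^\gamma(Z_{\con\beta})Z_\gamma f .$$
Since $\omega = O(|x|)$ by \eqref{omega}, each connection term is bounded by $O(|x|)\big(|Z_\gamma f|+|Z_{\con\gamma}f|\big)$.

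\textbf{Step 2 (pure derivatives in the flat frame).} I substitute the first block of \eqref{RelazioniCoordinatePH}, which expresses $Z_\alfa$ as $\cerchio{Z}_\alfa$ plus $O(|x|^2)$ in the horizontal directions and $O(|x|^3)$ in $\cerchio{T}$, into $Z_{\con\alfa}Z_\alfa f$ and expand. The product of the two principal parts gives $\cerchio{Z}_{\con\alfa}\cerchio{Z}_\alfa f$, and summing over $\alfa$ together with the conjugate term yields $\cerchio{\Delta}_bf$.

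The remaining terms fall into three groups:
\begin{itemize}
\item Cross terms give $O(|x|^2)$ times flat horizontal second derivatives, and $O(|x|^3)$ times $\cerchio{Z}\cerchio{T}f$.
\item Products of two perturbations give $O(|x|^4)$ times $\cerchio{Z}\cerchio{Z}f$, $O(|x|^5)$ times $\cerchio{Z}\cerchio{T}f$, and $O(|x|^6)$ times $\cerchio{T}^2f$.
\item When the outer vector field differentiates a coefficient, we get derivatives of the $O(|x|^k)$ coefficients. Here I use that $\cerchio{Z}$ lowers homogeneity by one, so $\cerchio{Z}\big(O(|x|^2)\big)=O(|x|)$ and $\cerchio{Z}\big(O(|x|^3)\big)=O(|x|^2)$. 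These terms contribute $O(|x|)\,|\cerchio{Z}f|$ and $O(|x|^2)\,|\cerchio{T}f|$.
\end{itemize}
Finally, the connection terms from Step 1 are rewritten via \eqref{RelazioniCoordinatePH} as $O(|x|)\big(|\cerchio{Z}_\beta f|+|\cerchio{Z}_{\con\beta}f|\big)+O(|x|^4)|\cerchio{T}f|$.

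\textbf{Step 3 (collecting terms).} Grouping everything gives exactly the stated list. The $|Tf|$ appearing there can be replaced by $|\cerchio{T}f|$ up to $O(|x|)|\cerchio{Z}f|$, using the third line of \eqref{RelazioniCoordinatePH}.

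\textbf{Main obstacle.} The delicate point is the claim that the $O(|x|^k)$ coefficients in \eqref{RelazioniCoordinatePH} are homogeneous enough that applying $\cerchio{Z}$ or $\cerchio{T}$ loses exactly one order, respectively two orders, in $|x|$. Without this, the first-order error terms would not have the stated weights.

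To establish it, I would go back to Proposition 2.5 in \cite{Jerison&Lee2}. There the coframe coefficients are given as Taylor expansions in the homogeneous weights of the normal coordinates, with smooth remainders of the corresponding weighted order. It then follows directly that the $O(\cdot)$ terms obtained after inverting the matrix, as in the proof of Lemma \ref{LemmaRelazioniPH}, are weighted-homogeneous in the same sense.
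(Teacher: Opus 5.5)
Your proof is correct, but it runs in the opposite direction from the paper's.

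\textbf{The paper's route.} It first invokes Lemma \ref{StimeDerivateCoordinateCR}, which is built on the inverse second-order relations \eqref{RelazioniInverseCoordinatePH2}. This bounds $f_{,\alpha\con{\alpha}}+f_{,\con{\alpha}\alpha}-\cerchio{\Delta}_bf$ by powers of $|x|$ times \emph{covariant} derivatives $f_{,\beta},f_{,0},f_{,\alpha\beta},\dots,f_{,00}$. Only afterwards does it convert these into flat derivatives.

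\textbf{Your route.} You substitute \eqref{RelazioniCoordinatePH} directly into $Z_{\con{\alpha}}Z_\alpha f$ and treat the connection terms with \eqref{omega}. This gives the flat error terms in one pass. Your bookkeeping checks out, including:
\begin{itemize}
\item the terms where the $O(|x|^3)\cerchio{T}$ part of the outer field hits a coefficient, which give $O(|x|^3)|\cerchio{Z}f|$ and $O(|x|^4)|\cerchio{T}f|$ and are absorbed;
\item the final replacement of $|\cerchio{T}f|$ by $|Tf|$ via the third line of \eqref{RelazioniCoordinatePH}.
\end{itemize}

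\textbf{What your route buys.}
\begin{itemize}
\item It is shorter.
\item It avoids converting second covariant derivatives back to flat ones. In the paper that step implicitly requires exactly the expansion you perform.
\item It makes explicit a fact the paper leaves to ``some computations'': the coefficients lie in Jerison--Lee's weighted classes, so $\cerchio{Z}$ and $\cerchio{T}$ lower the order by one and two respectively.
\end{itemize}

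\textbf{What the paper's route buys.} Its intermediate inequality, with covariant derivatives on the right-hand side, is precisely the form used in Lemma \ref{lem3.10}. There the decay bounds available from Lemmas \ref{lem3.8} and \ref{lem3.9} concern $(u_i)_{,\alpha}$, $(u_i)_{,0}$, $(u_i)_{,\alpha\beta}$, and so on. Starting from your version, one would have to convert back using \eqref{RelazioniInverseCoordinatePH} and Lemma \ref{StimeDerivateCoordinateCR}.
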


\begin{proof}
	Using formulas \eqref{RelazioniCoordinatePH} and Lemma \ref{StimeDerivateCoordinateCR} we get
	
	$$|(\Delta_b -\cerchio{\Delta}_b)f| = |f_{\alfa\con{\alfa}}+f_{\con{\alfa}\alfa} - \cerchio{Z}_{\alfa}\cerchio{Z}_{\con{\alfa}}f - \cerchio{Z}_{\con{\alfa}}\cerchio{Z}_{\alfa}f| \lesssim $$
	
	$$\lesssim (|f_{,\beta}|+|f_{,\con{\beta}}|)|x| + (|f_{,0}|+|f_{,\alfa\beta}|+|f_{,\alfa\con{\beta}}|+|f_{,\beta\con{\alfa}}|+|f_{,\con{\beta}\con{\alfa}}|)|x|^2 + (|f_{,\alfa 0}|+f_{,0\con{\alfa}})|x|^3+$$
	$$+(|f_{,\beta\gamma}|+|f_{,\beta\con{\gamma}}|+|f_{,\con{\beta}\con{\gamma}}|)|x|^4 +(|f_{,0\beta}|+|f_{,0\con{\beta}}|)|x|^5 + |f_{,00}||x|^6\lesssim$$
	
	$$\lesssim (|\cerchio{Z}_{\beta}f|+|\cerchio{Z}_{\con{\beta}}f|)|x| + (|Tf|+|\cerchio{Z}_{\beta}\cerchio{Z}_{\alfa}f|+|\cerchio{Z}_{\con{\beta}}\cerchio{Z}_{\alfa}f|+|\cerchio{Z}_{,\con{\alfa}}\cerchio{Z}_{,\beta}f|+|\cerchio{Z}_{\con{\alfa}}\cerchio{Z}_{\con{\beta}}f|)|x|^2 + (|\cerchio{Z}_{\alfa}Tf|+\cerchio{Z}_{\con{\alfa}}Tf)|x|^3+$$
	$$+(|\cerchio{Z}_{\gamma}\cerchio{Z}_{\beta}f|+|\cerchio{Z}_{\con{\gamma}}\cerchio{Z}_{\beta}f|+|\cerchio{Z}_{\con{\gamma}}\cerchio{Z}_{\con{\beta}}f|)|x|^4 +(|\cerchio{Z}_{\beta}Tf|+|\cerchio{Z}_{\con{\beta}}Tf|)|x|^5 + |T^2f||x|^6$$
\end{proof}

\bibliographystyle{amsplain}

\end{document}